\newtheorem{theorem}{Theorem}[section]
\newtheorem{proposition}[theorem]{Proposition}
\newtheorem{lemma}[theorem]{Lemma}
\newtheorem{corollary}[theorem]{Corollary}
\theoremstyle{definition}
\newtheorem{definition}[theorem]{Definition}
\newtheorem{example}[theorem]{Example}
\theoremstyle{remark}
\newtheorem{remark}[theorem]{Remark}
\numberwithin{equation}{section}
\newcommand{\eqbreak}[1][2]{\\&\hspace{#1em}}
\newcommand{\eqand}[1][1]{\hspace{#1em}\text{and}\hspace{#1em}}
\newcommand{\any}{\,\cdot\,}
\newcommand{\hook}{\lrcorner\,}
\newcommand{\cH}{{\mathcal H}}
\newcommand{\cL}{{\mathcal L}}
\newcommand{\cV}{{\mathcal V}}
\newcommand{\G}{\mathit{G}}
\newcommand{\Spin}{\mathit{Spin}}
\newcommand{\bC}{{\mathbb C}}
\newcommand{\bR}{{\mathbb R}}
\newcommand{\bZ}{{\mathbb Z}}
\newcommand{\lie}[1]{\mathfrak{#1}}
\newcommand{\mfa}{\lie{a}}
\newcommand{\mfg}{\lie{g}}
\newcommand{\mfh}{\lie{h}}
\newcommand{\mfn}{\lie{n}}
\newcommand{\mfp}{\lie{p}}
\newcommand{\mfs}{\lie{s}}
\newcommand{\mfu}{\lie{u}}
\newcommand{\mfz}{\lie{z}}
\newcommand{\gl}{\lie{gl}}
\newcommand{\sL}{\lie{sl}}
\newcommand{\sP}{\lie{sp}}
\newcommand{\vf}{\mathfrak X}
\newcommand{\spa}[1]{\mathrm{span}(#1)}
\newcommand{\proj}{\mathrm{proj}}
\newcommand{\Se}{\mathrm{e}}
\newcommand{\dN}{d^\nabla}
\newcommand{\bWalpha}{{\hat\alpha}}
\newcommand{\spwhat}{\,\widehat{\mkern8mu}\,}
\newcommand{\Hrel}{\sim_\cH}
\newcommand{\Hodge}{{\star}}
\DeclareMathOperator{\End}{End}
\DeclareMathOperator{\Hom}{Hom}
\DeclareMathOperator{\ad}{ad}
\DeclareMathOperator{\diag}{diag}
\DeclareMathOperator{\id}{id}
\DeclareMathOperator{\im}{im}
\DeclareMathOperator{\inc}{inc}
\DeclareMathOperator{\tr}{tr}
\DeclarePairedDelimiter{\norm}{\lVert}{\rVert}
\setlist{nosep}
\begin{document}

\title{The shear construction}

\author{Marco Freibert}

\address{Mathematisches Seminar\\
Christian-Albrechts-Universit\"at zu Kiel\\
Ludewig-Meyn-Strasse 4\\
D-24098 Kiel\\
Germany}

\email{freibert@math.uni-kiel.de}

\thanks{Research partially supported by Danish Council for Independent
Research \textbar\ Natural Sciences projects DFF - 4002-00125
(M.~Freibert) and DFF - 6108-00358 (A.~Swann).}

\author{Andrew Swann}

\address{Department of Mathematics\\
Aarhus University\\
Ny Munkegade 118, Bldg 1530\\
DK-8000 Aarhus C\\
Denmark}

\email{swann@math.au.dk}

\date{}


\begin{abstract}
  The twist construction is a method to build new interesting examples
  of geometric structures with torus symmetry from well-known ones.
  In fact it can be used to construct arbitrary nilmanifolds from
  tori.
  In our previous paper, we presented a generalization of the twist, a
  shear construction of rank one, which allowed us to build certain
  solvable Lie algebras from $\bR^n$ via several shears.
  Here, we define the higher rank version of this shear construction
  using vector bundles with flat connections instead of group actions.
  We show that this produces any solvable Lie algebra from $\bR^n$ by
  a succession of shears.
  We give examples of the shear and discuss in detail how one can
  obtain certain geometric structures (calibrated $\G_2$, co-calibrated
  $\G_2$ and almost semi-K\"ahler) on two-step solvable Lie algebras by
  shearing almost Abelian Lie algebras.
  This discussion yields a classification of calibrated
  $\G_2$-structures on Lie algebras of the form
  $(\mfh_3\oplus \bR^3)\rtimes \bR$.
\end{abstract}

\maketitle

\section{Introduction}
\label{sec:introduction}

The twist construction, introduced in its full generality by the
second author in~\cite{Sw2}, is a geometric model of
\emph{$T$-duality}, a duality relation between different physical
theories closely related also to mirror symmetry of Calabi-Yau
three-folds~\cite{SYZ}.
In particular, the $S^1$-version of the twist~\cite{Sw1} generalises
$T$-duality constructions of Gibbons, Papadopoulos and Stelle
\cite{GPS} in HKT geometry.
However, applications of the twist are not restricted to specific
geometric structures from physics.
In general, one may take an arbitrary tensor field on a manifold $M$
invariant under the action of a connected $n$-dimensional Abelian Lie
group $A$ and twist it to a tensor field on a twist space $W$ of
the same dimension as $M$.

The twist considers double fibrations $M\leftarrow P \rightarrow W$
where both projections are principal $A$-bundles and the principal
actions commute.
An appropriate choice of principal $A$-connection gives horizontal
spaces $\cH_p$ that are identified with corresponding tangent spaces
of $M$ and $W$ under the projections.
This enables one to transfer any invariant tensor field on $M$ to a
unique tensor field on~$W$.
Moreover, one may recover $P$ and $W$ under suitable assumptions
from ``twist data'' on~$M$, cf.~\cite{Sw2}.
Hence, one can study properties of the transferred geometric
structures solely on $M$, without constructing the transferred
structure, or even $W$ or $P$, explicitly.
This has been successfully applied to produce new
interesting examples of various geometric structures from known ones,
e.g.\ examples of SKT, hypercomplex or HKT manifolds with special
properties~\cite{Sw1} or generalisations of them~\cite{FU,IP}, and
includes and generalizes well-known geometric constructions in hyper-
and quaternionic K\"ahler geometry~\cite{MS,Sw3} if one first performs
an appropriate ``elementary deformation'' of the initial structures.

A number of the above examples of the twist are motivated by known
results for nilmanifolds.
Indeed, we will show that the construction is powerful enough to
construct any nilmanifold from the torus by several successive twists.
It follows that the twist can reproduce all the invariant geometric structures
on nilpotent Lie groups or nilmanifolds constructed in the last years,
see~\cite{BDV,CF,U}, for example, and may be used to create new ones.

On the other hand, there is much current interest in invariant
geometric structures on the larger class of solvable Lie groups and
solvmanifolds, see~\cite{F1,F2,CFS,FV}, for example.
But, as we will see, the natural algebraic interpretation of the twist
cannot produce these.
The aim of this paper is to provide a more general geometric
construction that works naturally for solvable groups and
solvmanifolds.
A rank one version of such a construction was proposed in~\cite{FS},
and called the \emph{shear construction}.
This was good enough to construct any $1$-connected completely
solvable Lie groups $G$ from $\bR^n$ by subsequent shears.
Here, we extend the definition of the shear to arbitrary rank and show
that this allows one to shear $\bR^n$ to any simply-connected solvable
Lie group $G$ by a sequence of shears.
Even for the rank one case this extensions turns out to be slightly
more general than that of~\cite{FS}.

The main idea is to replace the Abelian group actions in the twist
constructions by morphisms of flat vector bundles satisfying a
torsion-free condition.
Initial data for a shear consists of two flat vector bundles $E$, $F$
over~$M$ of equal rank.
There should be a vector bundle morphism $\xi\colon E\rightarrow TM$,
so the image is locally generated by commuting vector fields that are
images of flat sections.
Furthermore, there is a two form $\omega\in \Omega^2(M,F)$ with values
in $F$ satisfying $\dN \omega=0$.
One then constructs shears by considering an appropriate submersion
$\pi\colon P\rightarrow M$ whose vertical subbundle $\cV$ is
isomorphic to $\pi^*F$ and which carries a connection-like one-form
$\theta\in \Omega^1(P,\pi^*F)$ with $\dN \theta=\pi^*\omega$.
Shears $S$ of $M$ are then obtained by lifting $\xi$ to an injective
bundle morphism $\mathring{\xi}\colon \pi^*E\rightarrow TP$ and taking
$S$ to be the leaf space of the distribution $\mathring{\xi}(\pi^*E)$.
As in the twist, there is a pointwise identification of tangent spaces
of $M$ and $S$ via horizontal spaces in~$P$, but the tensors that may
now be transferred satisfy an invariance condition modified by the
connections.

We motivate the particular construction of the shear by first
examining the left-invariant situation in detail and understanding the
twist construction in this context.
In~\S\ref{sec:revi-twist-constr} we see that the left-invariant twist
describes central extensions
$\mfa_P\hookrightarrow \mfp\twoheadrightarrow \mfg$ quotiented by a
central ideal $\tilde{\mfa}_G$.
We then demonstrate how one twists $\bR^n$ to any nilpotent Lie
algebras by several twists.
For the \emph{Lie algebra shear} \S\ref{sec:shears-lie-algebras}, we
replace central ideals by arbitrary Abelian ideals and determine the
necessary data.
This is sufficient to show how to build an arbitrary solvable Lie
algebra from $\bR^n$ by a sequence of shears.

In \S\ref{sec:lift}, we define the general \emph{shear} for arbitrary
manifolds equipped with appropriate vector bundles and describe the
relevant \emph{shear data}.
An important ingredient is a description of the lift procedure for
the bundle morphisms $\xi\colon E \to M$ to $\mathring{\xi}\colon
\pi^*E\rightarrow TP$, see Theorem~\ref{th:liftingbundlemorphisms}.
When $P$ is a fibre bundle, we note how this may be described via
Ehresmann connections.
We proceed to determine the conditions for tensor fields on $M$ to
be shear-able to tensor fields on $W$ and obtain useful formulas for
the exterior differentials of sheared forms and Nijenhuis tensors of
sheared almost complex structures.
Finally, in \S\ref{subsec:duality} we provide conditions
which ensure that the shear is invertible, yielding a dual
relationship between $M$ and~$S$.

In \S\ref{sec:examples}, we apply the shear construction to specific
examples.
After demonstrating in \S\ref{subsec:shearsnonpb} that the fibres of
$\pi$ in the shear construction may vary and that even in the fibre
bundle and rank one case, $\pi$ need not to be a principal bundle in
contrast to the findings in~\cite{FS} for our previous definition of a
rank one shear, we focus attention on the original situation of
left-invariant shears.
We explain in \S\ref{sec:shears-LAs-revisited} how our Lie algebra
version of the shear considered in \S\ref{sec:shears-lie-algebras}
arises from the general shear of \S\ref{sec:lift}.
We then apply the left-invariant shear to different geometric
structures on \emph{almost Abelian} Lie algebras, i.e.\ Lie algebras
of the form $\bR^n\rtimes \bR$.
We concentrate mainly on calibrated and cocalibrated $\G_2$-structures
on seven-dimensional almost Abelian Lie algebras.
The two classes of $G_2$ structures are of interest for various
reasons: calibrated $\G_2$-structures on compact manifolds have
interesting curvature properties, e.g.\ scalar flatness~\cite{Br} or
the Einstein condition~\cite{CI} already imply that they are
Ricci-flat; cocalibrated $\G_2$-structures are structures naturally
induced on oriented hypersurfaces in $\Spin(7)$-manifolds and may,
conversely, be used as initial values for the Hitchin flow~\cite{Hi}
whose solutions define such manifolds.
The almost Abelian cases were classified by the first author
in~\cite{F1,F2}.
To apply the shear to almost Abelian Lie algebras, we make a natural
ansatz for shear data on these Lie algebras and determine when the
shear of a calibrated or cocalibrated $\G_2$-structure is again
calibrated or cocalibrated, respectively.
In this way, we obtain many explicit examples of such structures on
general solvable Lie algebras of step-length two.
This leads to a full classification of all calibrated
$\G_2$-structures on Lie algebras of the form
$\left(\mfh_3\oplus \bR^3\right)\rtimes \bR$.
We close the paper with a similar discussion for almost semi-K\"ahler
geometries on solvable Lie algebras.

\section{Left-invariant constructions}
\label{sec:leftinv}

This section provides detailed motivation for the full definition of
the shear construction which will appear in the next section.
We describe the twist construction in the setting of left-invariant
structures on Lie groups and see how it may be generalized.
This involves seeing that the twist in this setting may be considered
as first building an extension of the initial Lie algebra $\mfg$ by a
\emph{central} Abelian ideal and then constructing the twisted Lie
algebra by taking the quotient of this extension by an appropriate
central Abelian ideal.
To obtain the \qq{shear construction} in this left-invariant setting,
we examine what happens when the Abelian ideals are not necessarily
central.
We then explain how we can apply this kind of shear repeatedly to
reduce any solvable Lie algebra to the Abelian Lie algebra~$\bR^m$.

\subsection{A review of the twist construction}
\label{sec:revi-twist-constr}

Recall that in general the twist construction~\cite{Sw2} considers
double fibrations
\begin{equation*}
  M\overset{\pi}{\longleftarrow} P\overset{\pi_W}{\longrightarrow}
  W.
\end{equation*}
Here each fibration is a principal $A$-bundle for some connected
$n$-dimensional Abelian Lie group $A$ and the two principal actions on~$P$
are required to commute.
It follows that both $M$ and $W$ carry actions of the group~$A$.
Furthermore $P\to M$ is equipped with a principal $A$-connection
$\theta$ which is also invariant under the principal $A$-action of
$P\rightarrow W$.
A transversality condition ensures that this connection allows one to
relate each $A$-invariant differential form $\alpha$ on $M$ to a
unique differential forms $\alpha_W$ on $W$ by requiring that the
corresponding pull-backs agree on the horizontal space
$\cH\coloneqq \ker \theta$.

Under suitable assumptions, one can start with \qq{twist data} on $M$
and use it to construct first $P$ and then $W$.
The twist data is given by:
\begin{enumerate}[\upshape(a)]
\item an $A_M\cong A$-action on $M$ expressed infinitesimally by a Lie
  algebra homomorphism $\xi\colon \mfa_M\to \vf(M)$,
\item an $n$-dimensional Abelian Lie algebra $\mfa_P$ and a closed
  integral two-form $\omega\in \Omega^2 (M,\mfa_P)$ with values in
  $\mfa_P$ such that $\cL_{\xi} \omega = 0$, $\xi^*\omega = 0$ and
\item a smooth function $a\colon M\to \mfa_P\otimes \mfa_M^*$ such
  that $\xi\hook \omega = -da$.
\end{enumerate}
Then $\pi\colon P\to M$ is the principal $A$-bundle over $M$ with
connection one-form $\theta\in \Omega^1(P,\mfa_P)$ having curvature
$\pi^*\omega$.
Moreover, if we denote by $\tilde{\xi}\colon \mfa_M\to \vf(P)$ the
horizontal lift of $\xi\colon \mfa_M\to \vf(P)$ and by
$\rho\colon \mfa_P \to \vf(P)$ the infinitesimal principal action of
$\pi\colon P\to M$, then
$W\coloneqq P/\langle \mathring{\xi}(\mfa_M)\rangle$ for
$\mathring{\xi}\colon \mfa_M\to \vf(P)$ given by
$\mathring{\xi} = \tilde{\xi}+\rho\circ a$.

In the left-invariant setting, $G\coloneqq M$, $P$ and $H\coloneqq W$
are all Lie groups and $\pi\colon P\to G$, $\pi_W\colon P\to H$ are
Lie group homomorphisms.
We may now boil everything down to the associated Lie algebras $\mfg$,
$\mfp$ and $\mfh$.
The curvature two-form $\omega$ becomes a closed element of
$\Lambda^2 \mfg^*\otimes \mfa_P$, the maps $\xi\colon \mfa_G\to \mfg$,
$\rho\colon \mfa_P\to \mfp$ and $\mathring{\xi}\colon \mfa_G\to \mfp$
are Lie algebra homomorphisms and $a$ is constant.
Hence, $\xi\hook \omega = -da = 0$, which implies that
$\cL_{\xi} \omega = 0$ and $\xi^*\omega = 0$ hold automatically.

Now let us impose that any element of $\mfg^*$ may be twisted to an
element of $\mfh^*$.
This requires
$0 = \cL_\xi \alpha = \xi\hook d\alpha = -\alpha([\xi,\any])$ for all
$\alpha\in \mfg^*$, which means that $\xi(\mfa_G)$ is
central in~$\mfg$.
Note that, $\mfp = \mfg\oplus \mfa_P$ as vector spaces with $\mfa_P$
an ideal in $\mfp$.
As $\mfg = \cH = \ker \theta$ and as $d\theta = \pi^*\omega$ for
$\pi\colon \mfp\to \mfg$, we get $[\mfg,\mfa_P] = \{0\}$ and
$[X,Y]_{\mfp} = [X,Y]_{\mfg}-\omega(X,Y)$ for all
$X,Y\in \mfg\subset \mfp$.
This means that $\mfa_P$ is central and
$\mfa_P\hookrightarrow \mfp\twoheadrightarrow \mfg$ is a central
extension of $\mfg$ by $\mfa_P$.
The extension is determined, up to equivalence, by the Lie algebra
cohomology class $[\omega]\in H^2(\mfg)$.
Furthermore, $\mathring{\xi}(\mfa_G)$ is a central Abelian ideal in
$\mfp$ as $\tilde{\xi}(\mfa_G)$ and $\mfa_P$ are central.
Since $\mfh = \mfp/\mathring{\xi}(\mfa_G)$,
$\mathring{\xi}(\mfa_G)\hookrightarrow \mfp\twoheadrightarrow \mfh$ is
a central extension as well.

\begin{remark}
  We will show that we can repeatedly twist any simply-connected
  $m$-dimensional nilpotent Lie group to the Abelian Lie group
  $\bR^m$.
  By duality it will follow that all such nilpotent Lie groups can be
  constructed by repeatedly twisting from $\bR^m$.

  Let $N$ be a simply-connected nilpotent Lie group.
  Write $\mfn$ for the associated $r$-step nilpotent Lie algebra and
  let $\mfn_0 = \mfn,\mfn_1,\dots,\mfn_r = [\mfn,\mfn_{r-1}] = \{0\}$
  be the corresponding lower central series of length~$r$, so
  $\mfn_1 = \mfn' = [\mfn,\mfn]$ and $\mfn_i = [\mfn,\mfn_{i-1}]$.
  Then $\mfn_{r-1}$ is central and non-zero.

  Take $\mfa_P = \mfa_G = \mfn_{r-1}$ with $\xi\colon \mfa_G\to \mfn$
  the inclusion and $a\colon \mfa_G\to \mfa_P$ the identity map.
  Now $\mfa\coloneqq \mfa_P = \mfa_G = \mfn_{r-1}$ and
  $\mfa\hookrightarrow \mfn \twoheadrightarrow \mfn/\mfa$ is a central
  extension of $\mfn/\mfa$.
  Choosing a linear splitting $p\colon \mfn/\mfa\to \mfn$ gives us a
  closed two form
  $\omega_0\in \Lambda^2 \left(\mfn/\mfa\right)^*\otimes \mfa$ defined
  by $\omega_0(X,Y) = p[X,Y]-[p(X),p(Y)]$.
  Pulling $\omega_0$ back to $\mfn$, we obtain a closed two-form
  $\omega\in\Lambda^2 \mfn^*\otimes \mfa$.
  This satisfies $\xi\hook \omega = 0$ and is exact as a smooth form
  on~$N$, since $N$ is diffeomorphic to $\bR^m$.
  Hence, we can build a principal $\bR^n$-bundle $\pi\colon P\to N$,
  for $n = \dim\mfa$, with connection one-form
  $\theta\in \Omega^1 (P,\mfa)$ such that $d\theta = \pi^*\omega$.
  The total space $P$ is also a simply-connected Lie group and
  $\theta$ is left-invariant.

  As vector spaces, one has $\mfp = \mfn\oplus \mfa$ and
  $\mathring{\xi}(\mfa_G) = \Delta(\mfa)$ is the diagonal in the
  central subalgebra $\mfa\oplus \mfa$ of $\mfp$.
  So the twist $H = P/\Delta(A)$ is a simply-connected Lie group and
  the associated Lie algebra $\mfh$ has
  $\mfh\cong \mfn/\mfa\oplus \mfa$ as Lie algebras.

  Note that $\mfn/\mfa\oplus \mfa$ is nilpotent of length $r-1$.
  Thus iterating this construction, we arrive after $r-1$ such twists
  at the Abelian Lie algebra $\bR^m$.
\end{remark}

\subsection{Shears for Lie algebras}
\label{sec:shears-lie-algebras}

An obvious generalization of this situation is to consider arbitrary
Abelian extensions.
To this end, let $\mfg$ be a Lie algebra and let $\mfa_P$ be an
Abelian Lie algebra.  An extension
\begin{equation*}
  \mfa_P\hookrightarrow\mfp\twoheadrightarrow \mfg
\end{equation*}
of $\mfg$ by $\mfa_P$ is determined by a two-form
$\omega\in \Lambda^2 \mfg^*\otimes \mfa_P$ with values in $\mfa_P$ and
a representation $\eta\in \mfg^*\otimes \gl(\mfa_P)$ of $\mfg$ on
$\mfa_P$ such that
\begin{equation}
  \label{eq:d-omega}
  d\omega = -\eta\wedge \omega.
\end{equation}
More precisely, we have $\mfp = \mfg\oplus \mfa_P$ as vector spaces
with $\mfa_P$ being an Abelian ideal.
The other Lie brackets are given by
\begin{equation*}
  [X,Y]_{\mfp} = [X,Y]_{\mfg}-\omega(X,Y)
  \eqand{}
  [X,Z]_{\mfp} = \eta(X)Z,
\end{equation*}
for all $X,Y\in \mfg$ and all $Z\in \mfa_P$.
Note that there is an associated principal $\bR^n$-bundle $P\to G$,
given by the associated simply-connected Lie groups, and a
left-invariant one-form
\begin{equation*}
  \theta\in \mfp^*\otimes \mfa_P
\end{equation*}
which is the projection to the $\mfa_P$-factor in
$\mfp = \mfg \oplus \mfa_P$.
Again, we set $\cH\coloneqq \ker\theta = \mfg\subset \mfp$ and call
$\cH$ the \emph{horizontal} space.  Note that
\begin{equation*}
  d\theta(X,Y) = -\theta([X,Y]_{\mfp}) = \omega(X,Y),\quad
  d\theta(X,Z) = -\eta(X)Z = -\eta(X)(\theta(Z)),
\end{equation*}
for all $X,Y\in \mfg\subset \mfp$ and all $Z\in \mfa_P$.  So
\begin{equation*}
  d\theta = \pi^*\omega-\pi^*\eta\wedge \theta
\end{equation*}
for the projection $\pi\colon \mfp\to \mfg$.

Now regard $\omega$ as a two-form on $G$ with values in the trivial
vector bundle $F\coloneqq G\times \mfa_P$ and $\eta$ as a connection
$\nabla$ on $F$ in the sense that $\nabla_Xf = X(f)+\eta(X)(f)$ for
all vector fields $X$ on $G$ and all sections $f$ of $F$, regarded as
smooth functions $f\colon G\to \mfa_P$.
The condition that $\eta$ is a representation is equivalent to
$\nabla$ being flat.
In the case of a central extension, $\nabla$ is just the natural flat
connection on the trivial bundle $F = G\times \mfa_P$.

The flatness of the connection $\nabla$ implies that the associated
exterior covariant derivative
$\dN\colon \Omega^k(G,F)\to \Omega^{k+1} (G,F)$ squares to~$0$.
As this derivative satisfies $\dN \alpha = \eta\wedge \alpha+d\alpha$
for all $\alpha\in \Omega^k (G,F) = \Omega^k(G,\mfa_P)$, we have
\begin{equation*}
  \dN\omega = 0
\end{equation*}
from~\eqref{eq:d-omega}.

We may consider $\theta$ as a left-invariant one-form on~$P$ with
values in the trivial bundle $P\times \mfa_P$.
This trivial bundle is the pull-back of~$F$, so we can pull the
connection $\nabla$ back to $P\times \mfa_P$.
We denote the resulting connection by $\nabla$ too.  Then we have
\begin{equation*}
  \dN\theta = \pi^*\omega.
\end{equation*}
By analogy with the twist, we wish to construct a new Lie algebra
homomorphism
\begin{equation*}
  \mathring{\xi}\colon \mfa_G\to \mfp,
\end{equation*}
from an Abelian Lie algebra $\mfa_G$ of the same dimension
as~$\mfa_P$, with the property that $\mathring{\xi}(\mfa_G)$ is an
$n$-dimensional Abelian ideal in~$\mfp$.  We will then define
\begin{equation*}
  \mfh\coloneqq \mfp/\mathring{\xi}(\mfa_G)
\end{equation*}
to be the \emph{shear} of $\mfg$.
The associated simply-connected Lie groups will then give us a
principal $\bR^n$-bundle $P\to H$ and so a double fibration
\begin{equation*}
  G \longleftarrow P \longrightarrow H.
\end{equation*}

As in the twist, the construction of $\mathring{\xi}$ should arise
from a Lie algebra homomorphism $\xi\colon \mfa_G\to \mfg$ with
$\xi = \pi\circ \mathring{\xi}$, which we require to be injective as
in the twist case this corresponds to the action being effective.
We can write
\begin{equation*}
  \mathring{\xi} = \tilde{\xi}+\rho\circ a,
\end{equation*}
where $\tilde{\xi}\colon \mfa_G\to \cH = \mfg \subset \mfp$ is the
horizontal lift and $\rho\colon \mfa_P\to \mfp$ is the inclusion.
We will require the map $a\colon \mfa_G\to \mfa_P$ to be an
isomorphism of Lie algebras.
In the vector space splitting $\mfp = \mfg \oplus \mfa_P$, the
prescription for $\mathring \xi$ just reads
$\mathring{\xi}Z = (\xi Z,a Z)$ for $Z \in \mfa_G$.

We need some notations for stating our results on the existence of
$\mathring{\xi}$.
The map $\xi\colon\mfa_G \to \mfg$ may be considered as a bundle
morphism $\xi\colon E \to TG$, where $E \coloneqq G\times \mfa_G$ is
the trivial bundle.  This carries a connection given by
\begin{equation}
  \label{eq:gamma}
  \gamma\coloneqq a^{-1}(\xi\hook \omega)+a^{-1} \eta\, a \in
  \mfg^*\otimes \gl\left(\mfa_G\right).
\end{equation}
We now have induced connections on all bundles of the form
$E^{\otimes r}\otimes F^{\otimes s}$ for $r, s\in \bZ$.
This gives associated covariant exterior derivatives for $k$-forms on
$G$ with values in these bundles.
To simplify the notation, we denote all these connections by $\nabla$
and the associated covariant exterior derivatives by $\dN$.
For any $k$-form $\beta$ on $G$ with values in such a bundle, we set
\begin{equation*}
  \cL^{\nabla}_{\xi} \beta
  \coloneqq \dN (\xi\hook \beta)+\xi\hook \dN \beta.
\end{equation*}

\begin{lemma}
  \label{lem:lift-cond}
  Let $\xi\colon \mfa_G\rightarrow \mfg$ be an injective Lie algebra
  homomorphism.
  Then the map $\mathring\xi\colon \mfa_G \to \mfp$ defined as above
  is a Lie algebra homomorphism with image an Abelian ideal if and
 only if
  \begin{enumerate}[\upshape(i)]
  \item\label{item:bracket}
    $[\xi e_1,\xi e_2] = \xi(\nabla_{\xi e_1}e_2-\nabla_{\xi e_2}e_1)$
    for all $e_1, e_2\in \Gamma(E)$,
  \item\label{item:isotropic} $\xi^*\omega = 0$,
  \item\label{item:invariance} $\cL^{\nabla}_{\xi}\alpha = 0$ for all
    $\alpha\in \mfg^*$.
  \end{enumerate}

  If these conditions are true, then all connections $\nabla$ are flat, so $(\dN)^2=0$, and
  \begin{equation*}
    \dN a=-\xi \hook \omega,\qquad \cL^{\nabla}_{\xi} \omega=0,\qquad \cL^{\nabla}_{\mathring{\xi}} \theta
    = 0.
  \end{equation*}
\end{lemma}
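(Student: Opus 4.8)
The plan is to reduce the equivalence to explicit identities among $\omega$, $\eta$, $a$ and $\gamma$, read off on the two summands of the splitting $\mfp=\mfg\oplus\mfa_P$. Since $\mfa_G$ is Abelian, $\mathring{\xi}$ is a Lie algebra homomorphism precisely when its image is Abelian, so the assertion ``homomorphism with image an Abelian ideal'' splits into (A) $[\mathring{\xi}e_1,\mathring{\xi}e_2]_{\mfp}=0$ for all $e_1,e_2\in\mfa_G$, and (B) $[p,\mathring{\xi}Z]_{\mfp}\in\mathring{\xi}(\mfa_G)$ for all $p\in\mfp$ and $Z\in\mfa_G$; for (B) it suffices to take $p\in\mfg$ and $p\in\mfa_P$ separately. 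Throughout I would use $[X,Y]_{\mfp}=[X,Y]_{\mfg}-\omega(X,Y)$, $[X,Z]_{\mfp}=\eta(X)Z$, and the single identity obtained from~\eqref{eq:gamma} by applying $a$, namely $a\,\gamma(Y)Z=\omega(\xi Z,Y)+\eta(Y)(aZ)$ for $Y\in\mfg$, $Z\in\mfa_G$.

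The first key step is to make condition~\ref{item:invariance} concrete. Expanding $\cL^{\nabla}_{\xi}\alpha=\dN(\xi\hook\alpha)+\xi\hook\dN\alpha$ for $\alpha\in\mfg^*$, using the induced connection on $E^*$ (so that $\dN(\xi\hook\alpha)(Y)(e)=-\alpha(\xi(\gamma(Y)e))$ on constant sections) together with $\dN\alpha=d\alpha$ and $d\alpha(X,Y)=-\alpha([X,Y]_{\mfg})$, I would obtain, up to sign conventions, $\cL^{\nabla}_{\xi}\alpha(Y)(e)=\alpha\bigl([Y,\xi e]_{\mfg}-\xi(\gamma(Y)e)\bigr)$. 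Hence condition~\ref{item:invariance} is equivalent to $[Y,\xi e]_{\mfg}=\xi(\gamma(Y)e)$ for all $Y\in\mfg$ and $e\in\mfa_G$; in particular it forces $\xi(\mfa_G)$ to be an ideal of $\mfg$ on which the adjoint action is intertwined with $\gamma$.

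The second step evaluates the three bracket tests, separating components. For (B) with $p=Y\in\mfg$, the $\mfg$-part of $[Y,\mathring{\xi}Z]_{\mfp}$ is $[Y,\xi Z]_{\mfg}$ and the $\mfa_P$-part is $-\omega(Y,\xi Z)+\eta(Y)(aZ)=a\,\gamma(Y)Z$; since $\mathring{\xi}Z'=(\xi Z',aZ')$, membership in $\mathring{\xi}(\mfa_G)$ holds iff $[Y,\xi Z]_{\mfg}=\xi(\gamma(Y)Z)$, i.e.\ exactly condition~\ref{item:invariance}. For (B) with $p=W\in\mfa_P$ one finds $[W,\mathring{\xi}Z]_{\mfp}=-\eta(\xi Z)(W)\in\mfa_P$, and because $\xi$ is injective $\mathring{\xi}(\mfa_G)\cap\mfa_P=\{0\}$, so this lies in the image iff $\eta(\xi Z)(W)=0$ for all $W$, i.e.\ $\eta\circ\xi=0$. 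For (A) the $\mfg$-part of $[\mathring{\xi}e_1,\mathring{\xi}e_2]_{\mfp}$ is $[\xi e_1,\xi e_2]_{\mfg}=0$ automatically ($\xi$ a homomorphism, $\mfa_G$ Abelian), while the $\mfa_P$-part is $-\omega(\xi e_1,\xi e_2)+\eta(\xi e_1)(ae_2)-\eta(\xi e_2)(ae_1)$.

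It remains to assemble the equivalence and to account for the auxiliary vanishing $\eta\circ\xi=0$ and for condition~\ref{item:bracket}, which here reads $\gamma(\xi e_1)e_2=\gamma(\xi e_2)e_1$. Assuming~\ref{item:bracket}--\ref{item:invariance}: setting $Y=\xi e'$ in condition~\ref{item:invariance} and using $[\xi e',\xi e]_{\mfg}=0$ gives $\gamma\circ\xi=0$; feeding this into $a\,\gamma(\xi e')e=\omega(\xi e,\xi e')+\eta(\xi e')(ae)$ and invoking condition~\ref{item:isotropic} yields $\eta\circ\xi=0$. These two make the $\mfa_P$-part of (A) vanish, so (A) holds; (B) over $\mfg$ is condition~\ref{item:invariance} and (B) over $\mfa_P$ is $\eta\circ\xi=0$. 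Conversely, (A)$\wedge$(B) gives condition~\ref{item:invariance} and $\eta\circ\xi=0$ from the two halves of (B), whence the $\mfa_P$-part of (A) collapses to $\omega(\xi e_1,\xi e_2)=0$, which is condition~\ref{item:isotropic}; and condition~\ref{item:bracket} follows again from $\gamma\circ\xi=0$ (both sides vanish). The supplementary claims---flatness of the connections and the identities $\dN a=-\xi\hook\omega$, $\cL^{\nabla}_{\xi}\omega=0$, $\cL^{\nabla}_{\mathring{\xi}}\theta=0$---then follow by substituting $\gamma\circ\xi=0$, $\eta\circ\xi=0$ and condition~\ref{item:invariance} into the definitions. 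The main obstacle, and the delicate bookkeeping, lies in the first two steps: computing $\cL^{\nabla}_{\xi}\alpha$ with the dual connection on $E^*$ and recognising that the $\mfa_P$-part of $[Y,\mathring{\xi}Z]_{\mfp}$ is precisely $a\,\gamma(Y)Z$, so that the single connection~\eqref{eq:gamma} simultaneously encodes the consistency of the ideal property over $\mfg$ and, via $\gamma\circ\xi=0$, both the redundancy of condition~\ref{item:bracket} and the vanishing $\eta\circ\xi=0$ that the list does not name explicitly.
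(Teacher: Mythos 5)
Your argument is correct, and its core is the same as the paper's: you decompose the requirement into the Abelian condition and the ideal condition, test the latter separately against $\mfg$ and $\mfa_P$ using the bracket formulas on $\mfp=\mfg\oplus\mfa_P$, identify the test over $\mfg$ with condition~\ref{item:invariance} via the identity $a\,\gamma(Y)Z=\omega(\xi Z,Y)+\eta(Y)(aZ)$ from~\eqref{eq:gamma}, and identify the test over $\mfa_P$ with $\eta\circ\xi=0$ using injectivity of~$\xi$ --- exactly the paper's computations. The one genuine difference is how $\eta\circ\xi=0$ is linked to the stated conditions. The paper obtains $\gamma\circ\xi=0$ by playing condition~\ref{item:bracket} against condition~\ref{item:invariance}, so that, given \ref{item:isotropic} and~\ref{item:invariance}, condition~\ref{item:bracket} is \emph{equivalent} to $\eta\circ\xi=0$. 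You instead get $\gamma\circ\xi=0$ from condition~\ref{item:invariance} alone, by setting $Y=\xi e'$ and using the standing hypothesis that $\xi$ is a Lie algebra homomorphism with Abelian domain (so $[\xi e',\xi e]=0$); together with \ref{item:isotropic} and~\eqref{eq:gamma} this yields $\eta\circ\xi=0$ without ever invoking~\ref{item:bracket}. Your route therefore proves slightly more: under the lemma's hypotheses, condition~\ref{item:bracket} is redundant, being a consequence of \ref{item:isotropic} and~\ref{item:invariance}. That observation is correct and consistent with the paper; condition~\ref{item:bracket} is kept on the list because it is the condition that survives to the general shear data of Definition~\ref{def:shear}, where $\xi$ is only a bundle map and no homomorphism hypothesis is available.

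The only thin spot is your final sentence: the supplementary claims do not all follow by mere substitution. The identity $\dN a=-\xi\hook\omega$ is indeed just \eqref{eq:gamma} rewritten, and $\cL^{\nabla}_{\mathring{\xi}}\theta=\pi^*(\xi\hook\omega+\dN a)=0$ then follows by pulling back; but flatness of $\nabla$ on $E$ means $[\gamma(X),\gamma(Y)]=\gamma([X,Y])$, and this needs an argument: condition~\ref{item:invariance} gives $[X,\xi Z]=\xi(\gamma(X)Z)$, and one applies the Jacobi identity in $\mfg$ to $X$, $Y$, $\xi Z$ and cancels $\xi$ by injectivity, which is exactly what the paper does. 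That flatness is in turn what legitimises $(\dN)^2=0$ and hence $\cL^{\nabla}_{\xi}\omega=\dN(\xi\hook\omega)+\xi\hook\dN\omega=-(\dN)^2a=0$. You should spell out that Jacobi-identity step; it is the only nontrivial point remaining after your main equivalence.
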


\begin{proof}
  The condition that $\mathring{\xi}$ is a Lie algebra homomorphism is
  equivalent to $\mathring{\xi}(\mfa_G)$ being Abelian.  For
  $Z,W \in \mfa_G$, this says
  \begin{equation*}
    \begin{split}
      (0,0) &= [\mathring{\xi}Z,\mathring{\xi}W]_{\mfp}
              = [(\xi Z,a Z),(\xi W,a W)]_{\mfp}\\
            &= \bigl(0,-\omega(\xi Z,\xi W)+\eta(\xi Z)(a W)-\eta(\xi W)(a
              Z)\bigr),
    \end{split}
  \end{equation*}
  which implies that
  $\xi^*\omega(Z,W) = \omega(\xi Z,\xi W) = \eta(\xi Z)(a W)-\eta(\xi
  W)(a Z)$ is equivalent to $\mathring{\xi}(\mfa_G)$ being Abelian.

  To investigate the condition that $\mathring{\xi}(\mfa_G)$ is an
  ideal, we compute
  \begin{equation*}
    \begin{split}
      [(X,0),\mathring{\xi} Z]_{\mfp} &= [(X,0),(\xi Z,a Z)]_{\mfp}
                                        = \bigl([X,\xi Z]_{\mfg},-\omega(X,\xi Z)+\eta(X)(a Z)\bigr),\\
      [(0,Y),\mathring{\xi} Z]_{\mfp} &= [(0,Y),(\xi Z ,a Z)]_{\mfp} =
                                        \bigl(0,-\eta(\xi Z)Y\bigr)
    \end{split}
  \end{equation*}
  for $X\in \mfg$, $Y\in \mfa_P$ and $Z\in \mfa_G$.  Thus
  $\mathring{\xi}(\mfa_G)$ is an ideal in $\mfp$ if and only if
  $[\xi Z,X]_{\mfg} = -\xi a^{-1}(\omega(\xi Z,X)+\eta(X)(a Z)) =
  -\xi(\gamma(X)Z)$ and $0 = \xi( a^{-1}\eta(\xi Z)(Y))$.  As
  $\xi$ is injective, the latter equation is equivalent to
  $\eta(\xi Z)(Y) = 0$ for all $Y\in \mfa_P$ and $Z\in \mfa_G$.
  Moreover, for $\alpha \in \mfg^*$, we have
  \begin{equation*}
    \begin{split}
      (\cL_{\xi}^{\nabla}\alpha)(Z,X)
      &= (\xi(Z)\hook d\alpha)(X) + \dN (\xi\hook \alpha)(Z,X)
        =-\alpha([\xi Z,X]_{\mfg})-\alpha(\xi\nabla_X Z)\\
      &=-\alpha\left([\xi Z,X]_{\mfg}+\xi(\gamma(X)Z)\right)
    \end{split}
  \end{equation*}
  for all $X\in \mfg$ and all $Z\in \mfa_G$.  Hence,
  $\mathring{\xi}(\mfa_G)$ is an Abelian ideal if and only if
  \ref{item:isotropic}~and \ref{item:invariance} from the statement
  hold and $\eta(\xi Z)Y=0$ for all $Y\in \mfa_P$ and $Z\in \mfa_G$
  holds.

  So assume now that \ref{item:isotropic} and \ref{item:invariance}
  are true.  If then also $\eta(\xi Z)Y=0$ holds for any
  $Y\in \mfa_P, Z\in \mfa_G$, condition~\ref{item:isotropic} implies
  $\gamma(\xi Z)(Y)=0$ and so
  $[\xi(Z),\xi(Y)] = 0 = \xi(\nabla_{\xi(Z)} Y-\nabla_{\xi(Y)}Z)$ for
  all $Z, Y\in\mfa_G$.  Hence, condition~\ref{item:bracket} holds.
  Conversely, if additionally~\ref{item:bracket} is valid, we obtain
  \begin{equation*}
    -\xi(\gamma(\xi Y)X) = [\xi(X),\xi(Y)] = \xi(\nabla_{\xi
    X}Y-\nabla_{\xi Y}X) = \xi(\gamma(\xi X)Y-\gamma(\xi Y)X)
  \end{equation*}
  and so $\gamma(\xi X)Y=0$ for all $X,Y\in \mfa_G$.  But then
  condition~\ref{item:isotropic} implies
  $\eta(\xi Y)Z=0$ for all $Y\in \mfa_G$,
  $Z\in \mfa_P$.

  Now if \ref{item:bracket}--\ref{item:invariance} are true, we have
  $[\xi(\any),X] = -\xi\circ\gamma(X)$ for all $X\in \mfg$ and the
  Jacobi identity gives us $[\gamma(X),\gamma(Y)] = \gamma([X,Y])$ for
  all $X,Y\in \mfg$.
  Thus $\nabla$ is flat on~$E$ and so the induced connections
  $\nabla$ on bundles of the form $E^{\otimes r}\otimes F^{\otimes s}$
  are flat as well.  In particular, $(\dN)^2 = 0$.  Now
  $(\dN a)(Y,X) = \nabla_X (a(Y))-a(\nabla_X Y) =
  \eta(X)aY-a\gamma(X)Y = -\omega(\xi(Y),X)$ for all $X\in \mfg$ and
  all $Y\in \mfa_G$, i.e.\ $\dN a = -\xi\hook \omega$.  But then
  $\cL_{\xi}^{\nabla}\omega = \xi\hook \dN \omega+\dN (\xi\hook
  \omega) = -(\dN)^2 a = 0$.  Finally,
  $\cL_{\mathring{\xi}}^{\nabla}\theta = \mathring{\xi}\hook
  \pi^*\omega+\dN \pi^*a = \pi^*(\xi\hook \omega+\dN a) = 0$.
\end{proof}

\begin{remark}
  The proof of Lemma~\ref{lem:lift-cond} shows that for non-injective
  $\xi\colon\mfa_G\rightarrow \mfg$, conditions
  \ref{item:bracket}--\ref{item:invariance} in
  Lemma~\ref{lem:lift-cond} still imply that $\mathring{\xi}(\mfa_G)$
  is an ideal in $\mfp$, but not necessarily Abelian anymore.
\end{remark}

\begin{definition}\label{def:shear-LAs}
 If in the situation of Lemma~\ref{lem:lift-cond} the conditions
  \ref{item:bracket}--\ref{item:invariance} are true, then we build the
  Lie algebra $\mfh\coloneqq \mfg/\mathring{\xi}(\mfa_G)$ and call it the \emph{shear} of $\mfg$.
\end{definition}

\begin{proposition}
  Any two solvable Lie algebras of the same dimension are related via
  a sequence of shear constructions.
\end{proposition}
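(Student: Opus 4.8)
The plan is to prove that every solvable Lie algebra of dimension $n$ is related to the Abelian Lie algebra $\bR^n$ by a sequence of shears; since ``related by a sequence of shears'' is reflexive, transitive (concatenate the sequences), and symmetric (a shear of $\mfg$ admits $\mfg$ back as a shear, which is the invertibility of \S\ref{subsec:duality}), the claim then follows by passing through $\bR^n$. I argue by induction on the derived length $k$ of $\mfg$. If $k\leq 1$ then $\mfg$ is Abelian and equals $\bR^n$, so suppose $k\geq 2$ and let $\mfa$ be the last non-zero term of the derived series of $\mfg$, a non-zero \emph{Abelian} ideal, and $\bar\mfg\coloneqq\mfg/\mfa$, which is solvable of derived length $k-1$.

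For the inductive step I produce shear data on $\mfg$ whose shear is $\bar\mfg\oplus\mfa$. Fix a linear splitting $s$ of $\mfa\hookrightarrow\mfg\twoheadrightarrow\bar\mfg$ and let $\pi_\mfa\colon\mfg\to\mfa$ be the associated projection. Take $\mfa_G=\mfa_P=\mfa$, let $\xi\colon\mfa_G\to\mfg$ be the inclusion and $a\colon\mfa_G\to\mfa_P$ the identity, choose the trivial connection $\eta=0$, and set
\[
  \omega(A,B)\coloneqq-\pi_\mfa\bigl([A,B]_{\mfg}\bigr),\qquad A,B\in\mfg .
\]
Because $\pi_\mfa$ is linear, the Jacobi identity gives $\dN\omega=d\omega=0$, so $\mfp\coloneqq\mfg\oplus_\omega\mfa_P$ is a central Abelian extension, and I claim conditions \ref{item:bracket}--\ref{item:invariance} of Lemma~\ref{lem:lift-cond} all hold: \ref{item:isotropic} is immediate since $\mfa$ is Abelian, while \ref{item:bracket} and \ref{item:invariance} follow from $\mfa$ being an ideal together with the identity $\omega(Y,X)=-[Y,X]_\mfg$ valid for $Y\in\mfg$, $X\in\mfa$. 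Hence $\mathring\xi(\mfa_G)=\{(X,aX):X\in\mfa\}$ is an Abelian ideal of $\mfp$ and the shear $\mfh\coloneqq\mfp/\mathring\xi(\mfa_G)$ is defined.

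It remains to identify $\mfh$. Representing $\mfh$ by $s(\bar\mfg)\oplus\mfa_P\subset\mfp$, a short computation shows that the $\mfa_P$-component of $[s\bar X_1,s\bar X_2]_\mfp$ contributed by $-\omega$ is cancelled exactly against the $\mfa$-component of $[s\bar X_1,s\bar X_2]_\mfg$ upon reducing modulo $\mathring\xi(\mfa_G)$; since $\eta=0$ also kills every bracket involving $\mfa_P$, this gives $[(\bar X_1,\cdot\,),(\bar X_2,\cdot\,)]_\mfh=([\bar X_1,\bar X_2],0)$, that is $\mfh\cong\bar\mfg\oplus\mfa$. Thus a single shear simultaneously trivialises both the extension cocycle and the adjoint action of $\mfg$ on $\mfa$, so $\mfh$ is solvable of derived length $k-1$ and dimension $n$. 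By the inductive hypothesis $\mfh$, and therefore $\mfg$, is related to $\bR^n$ by a sequence of shears.

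The step I expect to be most delicate is the choice of $\omega$. A naive central ($\eta=0$, $\omega$ the pulled-back cocycle) or semidirect choice leaves the $\bar\mfg$-action on $\mfa$, or a non-trivial multiple of the cocycle, intact, and so does \emph{not} lower the derived length; for example it returns $\mathfrak{aff}(1)$ to itself. The projection formula $\omega=-\pi_\mfa\circ[\,\cdot\,,\cdot\,]$ is precisely what forces $\mathring\xi(\mfa_G)$ to be the ``graph'' ideal that absorbs the action while $\omega$ stays closed, and checking that this one $\omega$ satisfies all of \ref{item:bracket}--\ref{item:invariance} is the crux. The only other point needing care is symmetry of the relation, for which I rely on the invertibility of \S\ref{subsec:duality}, equivalently on the observation that $\mfg\cong\mfp/\mfa_P$ and $\mfh\cong\mfp/\mathring\xi(\mfa_G)$ exhibit $\mfg$ and $\mfh$ as the two quotients of $\mfp$ by the transverse Abelian ideals $\mfa_P$ and $\mathring\xi(\mfa_G)$.
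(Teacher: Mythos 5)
Your proposal is correct and follows essentially the same route as the paper's own proof: reduce to the Abelian algebra by induction on the derived length, take $\mfa$ to be the last non-zero term of the derived series with $\xi=\inc$, $a=\id$, $\eta=0$ and $\omega=-\pi_\mfa\circ[\any,\any]$, verify the conditions of Lemma~\ref{lem:lift-cond}, identify the shear as the direct sum $(\mfg/\mfa)\oplus\mfa$, and invoke the duality of \S\ref{subsec:duality} for symmetry of the relation. The cancellation you describe when reducing modulo the diagonal ideal $\mathring\xi(\mfa)$ is exactly the computation $[A,B]_\mfp=([A,B],\pi_\mfa[A,B])\equiv((1-\pi_\mfa)[A,B],0)$ appearing in the paper.
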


\begin{proof}
  It is enough to show how to relate any solvable algebra to the
  Abelian algebra of the same dimension.

  Let $\mfs$ be an $r$-step solvable Lie algebra $r$-step of
  dimension~$n$.
  We will obtain the Abelian algebra~$\bR^n$ by a series of $r-1$
  shears.
  Let
  $\mfs^{(0)} = \mfs,\dots,\mfs^{(r)} = [\mfs^{(r-1)},\mfs^{(r-1)}] =
  \{0\}$ be the derived series of~$\mfs$.
  Then $\mfa \coloneqq \mfs^{(r-1)}$ is an Abelian ideal in $\mfs$.
  We take $\mfa_G = \mfa_P = \mfa$, let $\xi$ be the inclusion, take
  $a\colon \mfa_G\to\mfa_P$ to be the identity map and use the
  canonical flat connection $\eta = 0$ on $F\coloneqq G\times \mfa_P$.

  Choose a vector space splitting $p\colon \mfg/\mfa\to \mfg$ of
  $\mfa\hookrightarrow \mfg\twoheadrightarrow \mfg/\mfa$.
  This induces a projection $\pi_\mfa\colon \mfg \to \mfa$ with kernel
  $p(\mfg/\mfa)$.
  Let $\omega\in \Lambda^2 \mfg^*\otimes \mfa$ be the negative of
  projection of the Lie bracket to~$\mfa$, so
  $\omega(X,Y) = -\pi_\mfa[X,Y]$. Then $\dN\omega = d\omega = 0$ by
  the Jacobi identity. Moreover, $\xi^*\omega = 0$ as $\mfa$ is an
  Abelian ideal in~$\mfg$. Now
  $\gamma = a^{-1}(\xi\hook\omega) + a^{-1}\eta\,a = \xi\hook\omega $
  gives $\nabla_{\xi X} Y = \gamma(\xi X)(Y) = \omega(\xi Y,\xi X)=0$
  for all $X, Y\in \mfa_G$, so condition~\ref{item:bracket} in
  Lemma~\ref{lem:lift-cond} is satisfied.
  Finally, for $X\in\mfa_G$ and $Y\in\mfg$, we have
  $\gamma(Y)(X) = \omega(\xi X,Y) = -[\xi X,Y]$ since
  $\mfa = \xi(\mfa_G)$ is an ideal.
  By the proof of Lemma~\ref{lem:lift-cond}, this is equivalent to
  condition \ref{item:invariance} from Lemma~\ref{lem:lift-cond}.

  We may then use this data to shear $\mfg$ to the solvable Lie
  algebra $\mfh = \mfp/\mathring{\xi}(\mfa)$.
  Now $\mfp = \mfg \oplus \mfa$ as vector spaces and
  $\mathring\xi Z = (Z,Z) \in \mfp= \mfg \oplus \mfa$ for
  $Z \in \mfa$.
  It follows that $\mfh$ is the Lie algebra direct sum
  $\mfh = (\mfg/\mfa)\oplus \mfa$, since for $A,B \in \mfg$,
  $[A,B]_\mfp = ([A,B],\pi_\mfa([A,B])) \equiv ((1-\pi_\mfa)[A,B],0)
  \mod \mathring{\xi}(\mfa)$.
  In particular the shear $\mfh$ is solvable of step length~$(r-1)$.
  Iterating the construction, after $r-1$ shears we arrive at the
  Abelian Lie algebra $\bR^m$, as claimed.

  Conversely, suppose we are given $\mfh = (\mfg/\mfa) \oplus \mfa$.
  Then $k\colon\mfh=(\mfg/\mfa) \oplus \mfa\rightarrow\mfg$,
  $k((X,Y)) \coloneqq p(X)+Y$ is a vector space isomorphism.
  A shear that recovers $\mfg$ is now given by the two-form
  $\tilde \omega \coloneqq k^*\omega \in \Lambda^2\mfh^* \otimes
  \mfa$, the one-form
  $\tilde \eta \coloneqq k^*\gamma \in \mfh^* \otimes \gl(\mfa)$,
  $\tilde{\xi}\coloneqq-\inc$ and $\tilde{a}\coloneqq\id_{\mfa}$, cf.\
  also Theorem~\ref{th:duality}
\end{proof}

\section{The shear}

\subsection{Lifting certain vector bundle morphisms}
\label{sec:lift}

Now we define the shear construction in full generality.  Motivated by
the last section, we start with a vector bundle $\pi_E\colon E\to M$
endowed with a flat connection $\nabla = \nabla^E$ and a vector bundle
morphism $\xi\colon E\to TM$ satisfying condition~\ref{item:bracket}
above, that is
\begin{equation}
  \label{eq:xinablatorsionfree}
  \xi(\nabla_{\xi e_1}e_2-\nabla_{\xi e_2}e_1) = [\xi e_1,\xi e_2].
\end{equation}
We will then say that $(\xi,\nabla)$ is \emph{torsion free}.
Moreover, we assume that we have a second vector bundle
$\pi_F\colon F\to M$ of the same rank with flat connection
$\nabla = \nabla^F$ and a two-form $\omega\in \Omega^2 (M,F)$ with
values in~$F$ such that $\dN \omega = 0$.  We do not require
condition~\ref{item:isotropic} here as it will naturally follow from
our set-up below.  Condition~\ref{item:invariance} will arise as the
appropriate invariance condition when we consider transferring
differential forms in~\S\ref{sec:differential-forms}.

Let us assume that $M$ is the leaf space of a foliation on some
manifold~$P$ with leaves of dimension
$\mathrm{rk}(E) = \mathrm{rk}(F)$.  Write $\pi\colon P\to M$ for the
projection, which is a surjective submersion.  We wish to identify the
pull-back of~$F$ to~$P$ with the tangent spaces to the leaves
of the foliation.

\begin{definition}
  \label{def:total-space}
  Suppose there are a vector bundle morphism $\rho\colon \pi^*F\to TP$
  and a one-form $\theta\in \Omega^1 (P,\pi^*F)$, so a bundle morphism
  $\theta\colon TP\to \pi^*F$, such that
  \begin{enumerate}[\upshape(1)]
  \item\label{item:theta-rho} $\theta \circ \rho = \id_{\pi^*F}$,
  \item\label{item:pi-rho} $d\pi \circ \rho = 0$ and
  \item\label{item:d-theta} $\dN \theta = \pi^*\omega$.
  \end{enumerate}
  Then we call $(P,\theta,\rho)$ a \emph{shear total space}
  for~$\omega$.  We will call $\dim P - \dim M$ the
  \emph{rank} of~$P$.
\end{definition}

It follows that the dimension of each leaf of the foliation on~$P$
is equal to the rank of~$F$.  We define the natural subbundles
\begin{equation*}
  \cH\coloneqq \ker \theta,\quad \cV\coloneqq \ker d\pi
\end{equation*}
of $TP$.  We note that our assumptions give $\cV = \rho(\pi^*F)$, so
$TP = \cH \oplus \cV$.  We call $\cH$ the \emph{horizontal} and $\cV$
the \emph{vertical} subbundles.

\begin{remark}\label{re:Ehresmannconnection}
  For a shear total space, conditions \ref{item:theta-rho} and
  \ref{item:pi-rho} identify the vertical subbundle $\cV$ with the
  flat vector bundle $\pi^*F$.
  In particular, the element $\hat\theta = \rho \circ \theta$ in
  $\Omega^1(P,\cV)=\mathrm{Hom}(TP,\cV)\subset \End(TP)$ is a
  projection onto the vertical subbundle $\cV$.
  Thus, if $P$ is actually a fibre bundle, then $\hat\theta$ is (the
  connection form of) an Ehresmann connection on~$P$.
  Recall that the \emph{curvature} $R\in \Omega^2(P,TP)$ of the
  Ehresmann connection $\hat\theta$ is $R(X,Y)=\hat\theta[X_H,Y_H]$
  for all $X,Y\in \mathfrak{X}(P)$, where $Z_H$ is the horizontal part
  of $Z\in \mathfrak{X}(P)$.
  Hence, condition \ref{item:d-theta} implies $R=-\rho\circ\pi^*\omega$
  since both forms are horizontal and
  $\dN \theta(X,Y)=\nabla_X (\theta\, Y)-\nabla_Y (\theta\,
  X)-\theta[X,Y]=-\theta[X,Y]$ for horizontal $X$ and $Y$.
\end{remark}

Suppose now that we are given an arbitrary Ehresmann connection
$\hat\theta$ on a fibre bundle $\pi\colon P\rightarrow M$ as in
Remark~\ref{re:Ehresmannconnection}.
Then the natural question arises when $\hat\theta$ gives rise to a shear
total space $(P,\theta,\inc)$.
This question is answered in the following

\begin{proposition}\label{pro:Ehresmannconnection}
  Let $\pi\colon P\rightarrow M$ be a fibre bundle such that the
  vertical subbundle $\cV$ is the pull-back of a flat vector bundle
  $(F,\nabla)$ over~$M$.
  Write $\inc\colon\pi^*F = \mathcal V \to TP$ for the inclusion map.
  Then an Ehresmann connection $\hat\theta\in \End(TP)$ gives rise to
  a shear total space $(P,\theta,\inc)$,
  $\hat\theta = \inc\circ\theta$, for some $\omega\in \Omega^2(M,F)$,
  if and only if
  \begin{equation}
    \label{eq:rhotorsionfree}
    [X_1,X_2] = \nabla_{X_1} X_2-\nabla_{X_2} X_1
  \end{equation}
  for all vertical $X_1,X_2\in \mathfrak{X}(P)$ and all local parallel
  vertical vector fields preserve the horizontal subbundle~$\cH$.
  If this is the case, then the curvature $R$ of $\hat\theta$
  satisfies $\dN \theta=\pi^*\omega=-R$.
\end{proposition}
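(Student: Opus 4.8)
The plan is to reduce everything to the single condition~\ref{item:d-theta} of Definition~\ref{def:total-space}, and then to read it off from a case analysis of $\dN\theta$. Writing $\theta\in\Omega^1(P,\cV)=\Omega^1(P,\pi^*F)$ for the connection form $\hat\theta$ regarded as a map $TP\to\cV$, so that $\hat\theta=\inc\circ\theta$, conditions~\ref{item:theta-rho} and~\ref{item:pi-rho} hold automatically: the former because $\hat\theta$ is a projection onto $\cV$, the latter because $\inc$ lands in $\cV=\ker d\pi$. Hence $(P,\theta,\inc)$ is a shear total space for some $\omega$ if and only if $\dN\theta=\pi^*\omega$ for some $\omega\in\Omega^2(M,F)$.

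First I would compute, for the flat pull-back connection $\nabla$ on $\pi^*F$,
\begin{equation*}
  \dN\theta(X,Y)=\nabla_X(\theta\,Y)-\nabla_Y(\theta\,X)-\theta[X,Y],
\end{equation*}
separately for $X,Y$ vertical, for $X$ horizontal and $Y$ vertical, and for $X,Y$ horizontal. Using that $\theta$ restricts to the identity on $\cV$, that $\theta$ vanishes on $\cH$, and that $\cV$ is integrable, the vertical--vertical part becomes $\nabla_X Y-\nabla_Y X-[X,Y]$, so its vanishing is exactly~\eqref{eq:rhotorsionfree}. The horizontal--vertical part is $\nabla_X Y-\theta[X,Y]$; since $\dN\theta$ is tensorial and, by flatness, $\cV=\pi^*F$ admits local parallel frames spanning it pointwise, this part vanishes identically precisely when $\theta[X,Y]=0$ for all horizontal $X$ and all local parallel vertical $Y$, that is, when such $Y$ preserve $\cH$. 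Finally, the horizontal--horizontal part is $-\theta[X,Y]$, which under $\inc$ is exactly $-R(X,Y)$ by the formula for the Ehresmann curvature in Remark~\ref{re:Ehresmannconnection}.

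It then remains to relate horizontality of $\dN\theta$ to the two stated conditions and to produce~$\omega$. The vertical--vertical and horizontal--vertical computations show that $\dN\theta$ is horizontal, that is $V\hook\dN\theta=0$ for every vertical $V$, if and only if~\eqref{eq:rhotorsionfree} holds and every local parallel vertical field preserves~$\cH$. Since $\pi^*\omega$ is automatically horizontal, this proves necessity of the two conditions in the forward direction. For the converse I would invoke flatness of the pull-back connection: $(\dN)^2=0$ gives $\dN(\dN\theta)=0$, so the horizontal form $\dN\theta$ satisfies $\cL^\nabla_V(\dN\theta)=V\hook\dN(\dN\theta)=0$ for all vertical $V$, whence it is covariantly basic and descends to a unique $\omega\in\Omega^2(M,F)$ with $\dN\theta=\pi^*\omega$. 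This descent lemma for basic $\pi^*F$-valued forms is where the flat identification $\cV\cong\pi^*F$ is genuinely used, and is the step I expect to be the main obstacle. The curvature identity is then immediate: $\pi^*\omega$ and $R$ are both horizontal two-forms agreeing on $\cH$ by the horizontal--horizontal computation, so $\dN\theta=\pi^*\omega=-R$, as claimed.
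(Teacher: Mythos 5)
Your proposal is correct and follows essentially the same route as the paper: reduce everything to the horizontality of $\dN\theta$, compute the vertical--vertical and horizontal--vertical components to recover the torsion-free condition and the $\cH$-preservation condition, and use $(\dN)^2=0$ together with local parallel frames of $\pi^*F$ to descend the horizontal form $\dN\theta$ to $\omega\in\Omega^2(M,F)$. The only difference is cosmetic (the paper phrases the descent via basicness of the scalar components $f^i\circ\dN\theta$ rather than via $\cL^\nabla_V$, and treats the curvature identity in the preceding remark), so no further comment is needed.
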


\begin{proof}
  Note that there is an $\omega\in \Omega^2(M,F)$ with
  $\dN \theta = \pi^*\omega$ if and only if for all local parallel
  frames $(f^1,\dots,f^k)$ of~$F$ the forms
  $f^1\circ \dN\theta,\dots, f^k\circ \dN\theta$ are basic.
  As $d(f^i\circ \dN\theta) = f^i\circ \dN \dN\theta=0$ for all
  $i=1,\dots, k$, this is, in turn, equivalent to $\dN\theta$ being
  horizontal.

  To check the horizontality of $\dN\theta$, first let $X_1$, $X_2$ be
  two vertical vector fields on~$P$.  Then
  \begin{equation*}
    \begin{split}
      \dN \theta(X_1,X_2)
      &= \nabla_{X_1}(\theta\,X_2) - \nabla_{X_2}(\theta\,X_1)
        -\theta([X_1,X_2])\\
      &= \nabla_{X_1} X_2-\nabla_{X_2} X_1-[X_1,X_2]
    \end{split}
  \end{equation*}
  as $[X_1,X_2]$ is vertical.
  So $\dN\theta(X_1,X_2)=0$ if and only if equation
  \eqref{eq:rhotorsionfree} holds.
  Next, let $X$, $Y$ be vector fields on~$P$ with $X$ vertical and $Y$
  horizontal.
  As $\cV=\pi^*F$ has a local basis of parallel sections, we may
  assume that $X$ is parallel.
  Hence, $\dN \theta(X,Y)=-\theta([X,Y])$, which is zero if and only
  if $\cL_X Y=[X,Y]$ is horizontal, i.e.\ if and only if $X$ preserves
  the horizontal subbundle.
\end{proof}

\begin{remark}\label{re:fibresaffinemfds}
  By equation~\eqref{eq:rhotorsionfree}, the fibres of
  $\pi\colon P\rightarrow M$ are endowed with a torsion-free flat
  connection, so they are affine manifolds.
  Moreover, $\cV$ is the pull-back of a vector bundle over $M$, so the
  fibres are parallelisable.
  As the connection is a pull-back, there is a parallel, and so
  commuting, basis of vector fields on each fibre.
  This is in accordance with the twist case where the fibres were
  connected Abelian Lie groups.

  The proof of Proposition~\ref{pro:Ehresmannconnection} shows that
  for an arbitrary shear total space $(P,\theta,\rho)$ we have
  $[\rho f_1,\rho f_2]=\rho(\nabla_{\rho f_1}f_2-\nabla_{\rho
  f_2}f_1)$ for all $f_1,f_2\in\Gamma(\pi^*F)$, which is the
  torsion-free condition~\eqref{eq:xinablatorsionfree}.
  Finally, note that the condition that (local) parallel vertical
  vector fields preserve the horizontal subbundle in
  Proposition~\ref{pro:Ehresmannconnection} corresponds in the twist
  case to the principal action preserving the horizontal subbundle.
\end{remark}

Now we want to find conditions under which there exists a vector
bundle morphism $\mathring{\xi}\colon \pi^*E\to TP$
covering~$\xi$, i.e.
\begin{equation}\label{eq:cover}
  \begin{CD}
    \pi^* E @>\mathring{\xi}>> TP \\
    @VVV @VV d\pi V\\
    E @>\xi>> TM.
  \end{CD}
\end{equation}
commutes, such that $\mathring\xi$ preserves $\theta$:
\begin{equation}
  \label{eq:invariantconnection}
  \cL_{\mathring{\xi}}^{\nabla} \theta
  \coloneqq \dN(\mathring{\xi}\hook \theta) +
  \mathring{\xi}\hook \dN\theta
  = 0
\end{equation}
and $(\mathring\xi,\nabla)$ is torsion-free:
\begin{equation}\label{eq:mathringxi}
  \mathring{\xi}(\nabla_{\mathring{\xi} \tilde{e}_1}\tilde{e}_2
  - \nabla_{\mathring{\xi}\tilde{e}_2}\tilde{e}_1)
  = [\mathring{\xi}\tilde{e}_1,\mathring{\xi}\tilde{e}_2]
\end{equation}
for all $\tilde{e}_1, \tilde{e}_2\in \Gamma(\pi^*E)$.

Let us motivate our interest in such a bundle map~$\mathring{\xi}$.
Firstly, the two conditions were true in the left-invariant case
discussed in the previous section and they hold for the map $\rho$ of
a shear total space, since
$\cL_{\rho}^{\nabla}\theta = \rho\hook \dN \theta+\dN \id_{\pi^*F} =
\rho \hook \pi^*\omega = 0$.
Furthermore, the torsion-free condition implies that
$\mathring{\xi}(\pi^*E)$ is involutive; we will see that it is
equivalent to involutivity under our assumptions.
When $\mathring{\xi}$ has constant rank, this is, in turn, equivalent to
the integrability of $\mathring{\xi}(\pi^*E)$.
The shear should then be the leaf space of the corresponding foliation
on~$P$.

For the first condition, note that
$\cL_{\mathring{\xi}}^{\nabla}\theta = 0$ may be written as
\begin{equation}\label{eq:Liethetazero}
  \nabla_{\mathring{\xi}\tilde{e}}(\theta X)
  = \theta\bigl(\mathring{\xi}(\nabla_X \tilde{e})
  + [\mathring{\xi}\tilde{e},X]\bigr)
\end{equation}
for all vector fields $X\in \vf(P)$ and sections
$\tilde{e}\in \Gamma(\pi^*E)$.  As $E$ and $F$ are flat, they
have local bases of parallel sections.  Take $\tilde{e} = \pi^*e$ for
some local parallel section~$e$ of $E$.  If $X$ is horizontal,
this gives $\theta([\mathring{\xi}(\pi^*e),X]) = 0$, so
$\mathring{\xi}(\pi^*e)$ preserves the horizontal space, just as the
lifted action does in the twist construction.  If $X$ is a local
vertical vector field with $\theta X$ is parallel, we obtain that
$[\mathring{\xi}(\pi^*e),X]$ is horizontal.  However,
$\mathring{\xi}(\pi^*e)$ is $\pi$-related to $\xi e$ and $X$ is
$\pi$-related to~$0$, so the commutator has to be vertical too.
This shows
\begin{equation}
  \label{eq:commuting-actions}
  [\mathring{\xi}(\pi^*e),X] = 0,
\end{equation}
which in the twist construction is the requirement that the lifted and
principal actions commute.

After this motivation, we are interested in expressing the
requirements for a lift $\mathring{\xi}$ as above in equivalent
conditions for data on~$M$.

\begin{theorem}
  \label{th:liftingbundlemorphisms}
  Under the above assumptions \eqref{eq:xinablatorsionfree} and
  Definition~\ref{def:total-space}\ref{item:theta-rho}--\ref{item:d-theta},
  there exists a vector bundle morphism
  $\mathring{\xi}\colon \pi^*E\to TP$ covering $\xi\colon E\to TM$,
  preserving~$\theta$ and with $(\mathring\xi,\nabla)$
  torsion-free if and only if $\cL_{\xi}^{\nabla} \omega = 0$,
  $\xi\hook \omega$ is $\dN$-exact and $\xi^*\omega = 0$.
\end{theorem}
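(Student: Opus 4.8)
The plan is to reduce the whole statement to a question about a single piece of data, namely the vertical component of~$\mathring\xi$, and then to exploit the local parallel frames provided by the flatness of $E$ and~$F$. Since $TP=\cH\oplus\cV$ and $d\pi$ identifies $\cH$ with the horizontal lift of $TM$, any bundle morphism $\mathring\xi$ covering $\xi$ as in~\eqref{eq:cover} must be of the form $\mathring\xi=\tilde\xi+\rho\circ a$, where $\tilde\xi\colon\pi^*E\to\cH$ is the canonical horizontal lift of $\xi$ and $a\coloneqq\theta\circ\mathring\xi\in\Gamma(\Hom(\pi^*E,\pi^*F))$ is the remaining datum. This mirrors the splitting $\mathring\xi=\tilde\xi+\rho\circ a$ of the left-invariant case, so the statement becomes a question about the existence of a suitable~$a$, exactly as in Lemma~\ref{lem:lift-cond}.

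First I would rewrite the two requirements on $\mathring\xi$ in terms of $a$. Using $\theta\circ\rho=\id_{\pi^*F}$, $\theta\circ\tilde\xi=0$ and $\dN\theta=\pi^*\omega$, the preservation condition~\eqref{eq:invariantconnection} reads $\cL^\nabla_{\mathring\xi}\theta=\dN a+\mathring\xi\hook\pi^*\omega=\dN a+\pi^*(\xi\hook\omega)$, since $\pi^*\omega$ is horizontal and $\mathring\xi$ covers $\xi$. Hence preservation of $\theta$ is equivalent to the single equation
\begin{equation*}
  \dN a=-\pi^*(\xi\hook\omega).
\end{equation*}
Evaluating this on vertical directions shows $\nabla_{\cV}a=0$, i.e.\ $a$ is fibrewise parallel, while applying $\dN$ and using $(\dN)^2=0$ together with $\dN\omega=0$ gives $\pi^*\cL^\nabla_\xi\omega=\pi^*\dN(\xi\hook\omega)=0$, so $\cL^\nabla_\xi\omega=0$ because $\pi$ is a submersion. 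For the torsion-free condition~\eqref{eq:mathringxi} I would first check, as in the motivating computation around~\eqref{eq:Liethetazero}--\eqref{eq:commuting-actions}, that its left-hand side is tensorial in $\tilde e_1,\tilde e_2$, so it suffices to test it on pull-backs $\tilde e_i=\pi^*e_i$ of local parallel sections $e_i$ of $E$. For such sections the derivative term vanishes and, since $d\pi[\mathring\xi(\pi^*e_1),\mathring\xi(\pi^*e_2)]=[\xi e_1,\xi e_2]=0$ by~\eqref{eq:xinablatorsionfree}, the bracket is vertical; a direct computation of $\theta[\mathring\xi(\pi^*e_1),\mathring\xi(\pi^*e_2)]$ using $\dN\theta=\pi^*\omega$ and the already-established relation $\dN a=-\pi^*(\xi\hook\omega)$ collapses to $\pi^*\xi^*\omega(e_1,e_2)$. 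Thus, in the presence of preservation of~$\theta$, the torsion-free condition is equivalent to $\xi^*\omega=0$.

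With these two translations the equivalence falls out. For the forward direction, existence of $\mathring\xi$ produces an $a$ with $\dN a=-\pi^*(\xi\hook\omega)$, giving $\cL^\nabla_\xi\omega=0$ and, via the bracket computation, $\xi^*\omega=0$; moreover $a$ is fibrewise parallel, so in pulled-back parallel frames of $E$ and $F$ its components are constant along the connected fibres of $\pi$ and it descends to a global $a_0\in\Gamma(\Hom(E,F))$ on $M$ with $\dN a_0=-\xi\hook\omega$, proving that $\xi\hook\omega$ is $\dN$-exact. Conversely, given the three conditions, $\dN$-exactness supplies $a_0$ with $\dN a_0=-\xi\hook\omega$; setting $a\coloneqq\pi^*a_0$ and $\mathring\xi\coloneqq\tilde\xi+\rho\circ a$ yields $\cL^\nabla_{\mathring\xi}\theta=\pi^*(\dN a_0+\xi\hook\omega)=0$ and, by the same bracket computation now reading $\pi^*\xi^*\omega=0$, the torsion-free condition, with $\cL^\nabla_\xi\omega=0$ automatic from $(\dN)^2a_0=0$.

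The step I expect to be the main obstacle is the descent of $a$ from $P$ to $M$ in the forward direction: one must upgrade the statement ``$\pi^*(\xi\hook\omega)$ is $\dN$-exact on $P$'' to genuine $\dN$-exactness of $\xi\hook\omega$ on~$M$. This is where the fibrewise parallelism of $a$ (a consequence of preserving $\theta$) and the connectedness of the fibres must be combined carefully, using that $\cV=\pi^*F$ carries the pulled-back flat connection, so that a fibrewise parallel section is constant in any pulled-back parallel frame and hence basic. The remaining computations---the tensoriality of~\eqref{eq:mathringxi} and the evaluation of $\theta[\mathring\xi(\pi^*e_1),\mathring\xi(\pi^*e_2)]$---are routine once the conventions for $\hook$ and $\cL^\nabla$ are fixed as in~\S\ref{sec:shears-lie-algebras}.
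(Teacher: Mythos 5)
Your proposal is correct and follows essentially the same route as the paper: decompose $\mathring\xi=\tilde\xi+\rho\circ\mathring a$, show that preservation of $\theta$ forces $\dN\mathring a=-\pi^*(\xi\hook\omega)$ with $\mathring a$ fibrewise parallel and hence basic (giving $\dN$-exactness of $\xi\hook\omega$ and, automatically, $\cL^\nabla_\xi\omega=0$), and identify $\xi^*\omega$ as the vertical obstruction to the torsion-free condition. The only cosmetic difference is that you reduce the bracket computation to parallel sections via tensoriality and evaluate $\theta$ of the bracket directly from $\dN\theta=\pi^*\omega$, where the paper expands the bracket into four terms; the content is the same.
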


\begin{proof}
  Using $\cH$, we can lift $\xi\colon E\to TP$ uniquely to a bundle
  morphism $\tilde{\xi}\colon \pi^*E\to TP$ covering~$\xi$ with
  $\tilde{\xi}(\pi^*E)\subset \cH$.  When $\mathring\xi$ exists we
  get $d\pi(\mathring{\xi}-\tilde{\xi})=0$, so
  $(\mathring{\xi}-\tilde{\xi})(\pi^*E)\subset \cV$.  As
  $\rho\colon \pi^*F\to TP$ is injective and $\rho(\pi^*F) = \cV$,
  there is a uniquely defined bundle map
  $\mathring{a}\colon \pi^*E\to \pi^*F$ with
  \begin{equation}
    \label{eq:xi-rho-a}
    \mathring{\xi} = \tilde{\xi} + \rho\circ \mathring{a}.
  \end{equation}
  Thus,
  \begin{equation*}
    \cL_{\mathring{\xi}}^{\nabla} \theta
    = \dN(\mathring{\xi}\hook \theta)
    + \mathring{\xi}\hook \dN\theta
    = \dN\mathring{a}+\mathring{\xi}\hook \pi^*\omega
    = \dN\mathring{a} + \pi^*(\xi\hook \omega),
  \end{equation*}
  and so $\cL_{\mathring{\xi}}^{\nabla}\theta = 0$ if and only if
  $\pi^*(\xi\hook \omega) = -\dN\mathring{a}$.  But then
  $\nabla_{\rho} \mathring{a} = \rho\hook \dN\mathring{a} =
  -\rho\hook\pi^*(\xi\hook \omega) = 0$.  Thus
  $\nabla_{\cV} \mathring{a} = 0$.  This implies that
  $\mathring a$ is basic, so $\mathring{a} = \pi^* a$ for some
  bundle map $a\colon E\to F$.  Thus,
  $\cL_{\mathring{\xi}}^{\nabla}\theta = 0$ if and only if there
  exists a bundle map $a\colon E\to F$ with
  \begin{equation}
    \label{eq:xi-d-a}
    \xi\hook \omega=-\dN a,
  \end{equation}
  which says $\xi\hook \omega$ is $\dN$-exact.  Conversely,
  given~\eqref{eq:xi-d-a}, we immediately get
  $\cL^{\nabla}_{\xi}\omega = \dN(\xi\hook \omega)+\xi\hook \dN \omega
  = 0$, since $(\dN)^2 = 0$ and $\dN\omega = 0$ and may construct
  $\mathring\xi$ via~\eqref{eq:xi-rho-a} with
  $\cL_{\mathring{\xi}}^{\nabla}\theta=0$.

  Now we compute
  $[\mathring{\xi}\tilde{e}_1,\mathring{\xi}\tilde{e}_2]$ for two
  sections $\tilde{e}_1,\tilde{e}_2\in \Gamma(\pi^*E)$.  It suffices
  to consider $\tilde{e}_i = \pi^*e_i$, $i = 1,2$, for
  $e_1, e_2\in \Gamma(E)$.  Denote by $X^\sim\in \vf(p)$ the
  horizontal lift of a vector field $X\in \vf(M)$ and observe that
  $\tilde{\xi}(\pi^*e_i) = (\xi e_i)^\sim$ for $i = 1,2$.  Hence,
  \begin{equation*}
    \begin{split}
      [\mathring{\xi}(\pi^*e_1),\mathring{\xi}(\pi^*e_2)]
      &= [(\xi e_1)^\sim,(\xi e_2)^\sim]
        + [\rho\pi^*(ae_1),(\xi e_2)^\sim] \eqbreak
        + [(\xi e_1)^\sim,\rho\pi^*(ae_2)]
        + [\rho\pi^*(ae_1),\rho\pi^*(ae_2)].
    \end{split}
  \end{equation*}
  Thus, using \eqref{eq:xinablatorsionfree}, the horizontal part of
  $[\mathring{\xi}(\pi^*e_1),\mathring{\xi}(\pi^*e_2)]$ is equal to
  \begin{equation*}
    [\xi(e_1),\xi(e_2)]^\sim
    = \bigl(\xi(\nabla_{\xi e_1}e_2-\nabla_{\xi e_2}e_1)\bigr)^\sim
    = \tilde{\xi}\bigl(\pi^*(\nabla_{\xi e_1}e_2-\nabla_{\xi e_2}e_1)\bigr).
  \end{equation*}
  To compute the vertical part, we consider
  \begin{equation*}
    \begin{split}
      \theta([\tilde{\xi}\pi^*e_1,\tilde{\xi}\pi^*e_2])
      &= -\dN\theta(\tilde{\xi}\pi^*e_1,\tilde{\xi}\pi^*e_2)
        + \nabla_{\tilde{\xi}\pi^*e_1}(\theta(\tilde{\xi}\pi^*e_2))
        - \nabla_{\tilde{\xi}\pi^*e_2}(\theta(\tilde{\xi}\pi^*e_1))\\
      &= -(\pi^*\omega)(\tilde{\xi}\pi^*e_1,\tilde{\xi}\pi^*e_2)
        = -\pi^*(\omega(\xi e_1,\xi e_2))
    \end{split}
  \end{equation*}
  and
  \begin{equation*}
    \begin{split}
      \theta([\rho\pi^*(ae_1),\tilde{\xi}\pi^*e_2])
      &= -\dN\theta(\rho\pi^*(ae_1),\tilde{\xi}\pi^*e_2)
        - \nabla_{\tilde{\xi}\pi^*e_2}(\theta\rho\pi^*(ae_1))\\
      &= -(\pi^*\omega)(\rho\pi^*(ae_1),\tilde{\xi}\pi^*e_2)
        - \nabla_{\tilde{\xi}\pi^*e_2}(\pi^*(ae_1))\\
      &= -\pi^*(\nabla_{\xi e_2} (a e_1))
        = -\pi^*((\nabla_{\xi e_2} a)e_1 + a(\nabla_{\xi e_2} e_1))\\
      &= -\pi^*((\dN a) (\xi e_2)e_1 + a(\nabla_{\xi e_2} e_1))\\
      &= \pi^*(\omega(\xi e_1,\xi e_2) - a(\nabla_{\xi e_2}e_1)).
    \end{split}
  \end{equation*}
  Similarly,
  $\theta([\tilde{\xi}\pi^*e_1, \rho\pi^*(ae_2)]) = \pi^*(\omega(\xi
  e_1,\xi e_2) + a(\nabla_{\xi e_1} e_2))$.  Furthermore,
  \begin{equation*}
    \begin{split}
      \theta([\rho\pi^*(ae_1),\rho\pi^*(ae_2)])
      &= \nabla_{\rho\pi^*(ae_1)} (\pi^*(ae_2))
        - \nabla_{\rho\pi^*(ae_2)} (\pi^*(ae_1))\\
      &= \pi^*(\nabla_{d\pi(\rho\pi^*(ae_1))} (ae_2)
        - \nabla_{d\pi(\rho\pi^*(ae_2))} (ae_1))\\
      &= 0
    \end{split}
  \end{equation*}
  as $d\pi\circ \rho = 0$.  Now $\rho\circ \theta$ is the projection
  onto the vertical part and so
  \begin{equation*}
    \begin{split}
      [\mathring{\xi}\pi^*e_1,\mathring{\xi}\pi^*e_2]
      &= \tilde{\xi}\pi^*(\nabla_{\xi e_1 }e_2-\nabla_{\xi e_2}e_1)
        + \rho\pi^*(\omega(\xi e_1,\xi e_2)) \eqbreak
        +\rho\pi^*(a(\nabla_{\xi e_1} e_2-\nabla_{\xi e_2} e_1))\\
      &= \tilde{\xi}\bigl(\nabla_{\mathring{\xi}\pi^*e_1} (\pi^*e_2)
        - \nabla_{\mathring{\xi}\pi^* e_2}  (\pi^* e_1)\bigr)
        + \rho\pi^*(\xi^*\omega)(e_1,e_2) \eqbreak
        + (\rho\circ \mathring{a})\bigl(\nabla_{\mathring{\xi} \pi^* e_1}
        (\pi^* e_2) - \nabla_{\mathring{\xi} \pi^* e_2}  (\pi^* e_1)\bigr)\\
      &=
        \mathring{\xi}\bigl(\nabla_{\mathring{\xi}\pi^*e_1}(\pi^*e_2) -
        \nabla_{\mathring{\xi}\pi^*e_2}(\pi^*e_1)\bigr)
        + \rho\pi^*(\xi^*\omega)(e_1,e_2).
    \end{split}
  \end{equation*}
  As $\rho$ and $\pi^*$ are injective, we have
  $\mathring{\xi}(\nabla_{\mathring{\xi}\tilde{e}_1}\tilde{e}_2 -
  \nabla_{\mathring{\xi}\tilde{e}_2}\tilde{e}_1) =
  [\mathring{\xi}\tilde{e}_1,\mathring{\xi}\tilde{e}_2]$ for all
  $\tilde{e}_1, \tilde{e}_2\in \Gamma(\pi^*E)$ if and only if
  $\xi^*\omega = 0$.
\end{proof}

As noted in the proof, if $\xi\hook \omega$ is $\dN$-exact and
$\omega$ is $\dN$-closed, then we automatically get
$\cL_{\xi}^{\nabla}\omega = 0$.  Hence,
Theorem~\ref{th:liftingbundlemorphisms} naturally leads to the
following definition.

\begin{definition}\label{def:shear}
  \emph{Shear data} on a smooth manifold~$M$ is a triple
  $(\xi,a,\omega)$ consisting of a bundle map $\xi\colon E\to TM$, an
  invertible bundle morphism $a\colon E\to F$ and a two-form
  $\omega\in \Omega^2(M,F)$ with values in $F$, where $E$ and $F$ are
  flat vector bundles over $M$ of the same rank and
  \begin{enumerate}[\upshape(i)]
  \item\label{item:s-bracket} $(\xi,\nabla)$ is torsion-free
    \eqref{eq:xinablatorsionfree},
  \item\label{item:s-closure} $\dN\omega = 0$,
  \item\label{item:s-contract} $\xi\hook \omega=-\dN a$ and
  \item\label{item:s-pullback} $\xi^*\omega = 0$.
  \end{enumerate}

  Suppose additionally there is a shear total space
  $(P,\theta,\rho)$ as in Definition~\ref{def:total-space}.
  Define $\mathring{\xi}\colon \pi^*E\to TP$ by
  $\mathring{\xi}\coloneqq \tilde{\xi}+\rho\circ \pi^*a$, with
  $\tilde{\xi}$ the horizontal lift of $\xi$.  Then, by
  Theorem~\ref{th:liftingbundlemorphisms}, $\mathring{\xi}(\pi^*E)$ is
  an integrable distribution.  Furthermore, invertibility of $a$
  ensures that it is of constant rank.  The leaf space
  \begin{equation*}
    S\coloneqq P/\mathring{\xi}(\pi^*E)
  \end{equation*}
  is called \emph{the shear of $(M,\xi,a,\omega)$} when it is a smooth
  manifold.
\end{definition}

\begin{remark}\label{re:shear-is-twistlocally}
  Locally the shear is essentially the twist construction.  Given
  shear data $(\xi,a,\omega)$ on a manifold $M$, choose parallel
  frames $(e_1,\dots,e_k)$ and $(f_1,\dots,f_k)$ of $E$ and $F$
  locally.  Then we may write $\omega = \sum_{i = 1}^k \omega_i f_i$
  and $a = \sum_{i,j = 1}^k a_i^j e^i\otimes f_j$.  Putting
  $\mfa_M = \mfa_P\coloneqq \bR^k$,
  $\Omega\coloneqq (\omega_1,\dots,\omega_k)\in \Omega^2(M,\mfa_P)$,
  $A\coloneqq (a_i^j)_{i,j = 1,\dots,k}\in C^{\infty}(M,\mfa_P\otimes
  \mfa_M^*)$ and $\Xi\colon \mfa_M\to \vf(M)$,
  $\Xi(x^1,\dots,x^m)\coloneqq \sum_{i = 1}^k x^i \xi(e_i)$,
  conditions \ref{item:s-bracket}--\ref{item:s-pullback} of
  Definition~\ref{def:shear} show that $(\Xi,\Omega,A)$ is local twist
  data.  Note that these identifications depend on the choices of the
  flat structures.
\end{remark}

\begin{remark}
  In some cases we may construct a suitable space $P$ via a
  principal bundle on the universal cover~$\tilde M$ of~$M$.
  The pull-back of~$F$ to $\tilde M$ is flat and has a global
  basis of parallel sections.  The pull-back of $\omega$ may
  then be interpreted as the curvature two-form for a principal bundle
  $\tilde P \to \tilde M$ with Abelian structure group.  This
  imposes integrality conditions on the pull-back of~$\omega$, and
  one then needs to investigate whether $\tilde P$ can be chosen
  so that it descends to a bundle over~$M$.  We examined these
  questions in detail for one-dimensional fibres in~\cite{FS}.
  Consideration of similar questions phrased in the language of Lie
  algebroids may be found in \cite{McK1,McK2}.  However, as will see
  later, candidate spaces~$P$ may arise in other ways, unrelated
  to principal bundles.
\end{remark}

\subsection{Differential forms}
\label{sec:differential-forms}

Let us now fix some shear data $(\xi,a,\omega)$ on a manifold $M$ and
a corresponding triple $(\pi\colon P\to M,\theta,\rho)$ as in
Definition~\ref{def:shear} such that the shear
$S = P/\mathring{\xi}(\pi^*E)$ is smooth.
Write $\pi_S\colon P \to S$ for the projection to $S$.
We are interested in how to move geometric structures on~$M$ to~$S$.
In particular, how to relate $(p,0)$-tensor fields on~$M$ with
$(p,0)$-tensor fields on~$S$.

\begin{definition}
  Let $\alpha$ be a $(p,0)$-tensor field on $M$ and $\alpha_S$ be a
  $(p,0)$-tensor field on $S$.  We say that $\alpha$ is
  \emph{$\cH$-related to} $\alpha_S$, in symbols
  \begin{equation*}
    \alpha \Hrel \alpha_S,
  \end{equation*}
  if
  \begin{equation*}
    \pi^*\alpha|_{\cH} = \pi_S^*\alpha_S|_{\cH}.
  \end{equation*}
\end{definition}

For differential forms, this relation may be concretely described.

\begin{proposition}\label{pro:uniqueHrelated}
  A $k$-form $\alpha$ on $M$ is $\cH$-related to some $k$-form
  $\alpha_S$ on the shear $S$ if and only if
  $\cL_{\xi}^{\nabla} \alpha = 0$.  In this case, the $k$-form
  $\alpha_S$ is uniquely determined by
  \begin{equation*}
    \pi_S^* \alpha_S = \pi^* \alpha+\sum_{i = 1}^k
    (-1)^{i(2k+1-i)/2} \pi^*\bigl((\xi\circ
    a^{-1}\hook)^i \alpha\bigr) \wedge \theta^i.
  \end{equation*}
\end{proposition}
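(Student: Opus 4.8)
The plan is to identify $\alpha_S$, when it exists, with a form pulled back from the leaf space $S = P/\mathcal D$ of the distribution $\mathcal D \coloneqq \mathring\xi(\pi^*E)$, which is integrable by Theorem~\ref{th:liftingbundlemorphisms}, and to detect such forms through basicness. First I would record the splittings $TP = \cH\oplus\cV = \cH\oplus\mathcal D$. The first is part of Definition~\ref{def:total-space}; the second holds because $\mathcal D\cap\cH = 0$ (the vertical component $\rho\circ\pi^*a$ of $\mathring\xi = \tilde\xi+\rho\circ\pi^*a$ is injective, as $a$ is invertible and $\rho$ is injective, so no nonzero $\mathring\xi(\pi^*e)$ is horizontal) and $\mathrm{rk}\,\mathcal D = \mathrm{rk}\,\cV$. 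It follows that a $k$-form on $P$ with $\iota_{\mathring\xi\tilde e}\,\cdot = 0$ for all $\tilde e\in\Gamma(\pi^*E)$, which I will call $\mathcal D$-horizontal, is completely determined by its restriction to $\cH$. Since a $k$-form $\beta$ on $P$ equals $\pi_S^*\alpha_S$ for some $\alpha_S$ exactly when it is basic for $\mathcal D$, that is $\iota_{\mathring\xi\tilde e}\beta = 0$ and $\iota_{\mathring\xi\tilde e}\,d\beta = 0$ for all $\tilde e$, the relation $\alpha\Hrel\alpha_S$ forces $\pi_S^*\alpha_S$ to be the unique $\mathcal D$-horizontal form agreeing with $\pi^*\alpha$ on $\cH$. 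In particular $\alpha_S$ is unique.

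Next I would verify that the form $\beta$ defined by the right-hand side of the stated identity is exactly this unique form. Since every summand beyond $\pi^*\alpha$ carries a factor $\theta^i$ with $i\ge1$ and $\theta$ vanishes on $\cH$, one has $\beta|_\cH = \pi^*\alpha|_\cH$ at once. The substance is to show $\iota_{\mathring\xi(\pi^*e)}\beta = 0$ for local parallel sections $e$ of $E$. For this I would use $\iota_{\mathring\xi(\pi^*e)}\theta = \theta(\rho\,\pi^*(ae)) = \pi^*(ae)$, which comes from Definition~\ref{def:total-space}\ref{item:theta-rho} and $d\pi\circ\rho = 0$, together with $(\xi\circ a^{-1})(ae) = \xi e$. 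Contracting $\mathring\xi(\pi^*e)$ into a $\theta$-leg of the $i$-th summand then reproduces, up to sign, the $\theta$-free reduction of the $(i-1)$-st summand, while contracting into the $\pi^*\bigl((\xi\circ a^{-1}\hook)^i\alpha\bigr)$ factor produces a term still carrying $\theta^i$. The constants $(-1)^{i(2k+1-i)/2}$ are chosen precisely so that these two families cancel in consecutive pairs; the bottom contribution $\iota_{\mathring\xi(\pi^*e)}\pi^*\alpha = \pi^*(\iota_{\xi e}\alpha)$ is annihilated by the $i=1$ term, and there is no leftover top term because $(\xi\circ a^{-1}\hook)^k\alpha$ is a $0$-form, so its interior product vanishes. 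Equivalently, one may bypass the formula and instead solve, degree by degree in the $\cH\oplus\cV$ decomposition, the recursion that $\mathcal D$-horizontality imposes on the mixed components of a form restricting to $\pi^*\alpha$ on $\cH$; this recovers the same coefficients.

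It then remains to decide when $\beta$ is basic. As $\iota_{\mathring\xi\tilde e}\beta = 0$ already holds, basicness reduces to $\cL_{\mathring\xi\tilde e}\beta = \iota_{\mathring\xi\tilde e}\,d\beta = 0$ for all $\tilde e$; and because $\mathcal D$ is involutive, $\cL_{\mathring\xi\tilde e}\beta$ is again $\mathcal D$-horizontal, so it vanishes if and only if its restriction to $\cH$ does. For parallel $e$ the field $D\coloneqq\mathring\xi(\pi^*e)$ is $\pi$-related to $\xi e$ by the covering property of $\mathring\xi$, whence $\cL_D\pi^*\alpha = \pi^*(\cL_{\xi e}\alpha)$. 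Working in a local parallel frame of $F$, the correction terms become scalar combinations of the components $\theta^\mu$ of $\theta$; since $\theta^\mu$ vanishes on $\cH$ and, by equation~\eqref{eq:Liethetazero}, $D$ preserves $\cH$ so that $\cL_D\theta^\mu$ also vanishes on $\cH$, each correction term has vanishing Lie derivative along $D$ when evaluated on horizontal vectors. Hence $(\cL_D\beta)|_\cH = \pi^*(\cL_{\xi e}\alpha)|_\cH$, and using that $(\cL^\nabla_\xi\alpha)(e) = \cL_{\xi e}\alpha$ for parallel $e$, this equals $\pi^*\bigl((\cL^\nabla_\xi\alpha)(e)\bigr)|_\cH$. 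As $\cL^\nabla_\xi\alpha\in\Omega^k(M,E^*)$ is tensorial in the $E$-argument, letting $e$ range over a parallel frame shows that $\beta$ is basic if and only if $\cL^\nabla_\xi\alpha = 0$.

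Finally I would assemble the two implications. If $\cL^\nabla_\xi\alpha = 0$ then $\beta$ is basic and descends to a unique $\alpha_S$ with $\pi_S^*\alpha_S = \beta$; since $\beta|_\cH = \pi^*\alpha|_\cH$ this gives $\alpha\Hrel\alpha_S$ and the displayed formula. Conversely, if $\alpha\Hrel\alpha_S$ then $\pi_S^*\alpha_S$ is $\mathcal D$-horizontal and agrees with $\pi^*\alpha$ on $\cH$, so by the uniqueness of the first paragraph it coincides with $\beta$; being basic, it forces $\cL^\nabla_\xi\alpha = 0$. I expect the main obstacle to be the sign-and-constant bookkeeping of the second paragraph, namely checking that the iterated interior products of the $\Lambda^\bullet F^*$-valued forms telescope with exactly the coefficients $(-1)^{i(2k+1-i)/2}$; the remaining steps are structural, resting on the splitting $TP = \cH\oplus\mathcal D$ and on equation~\eqref{eq:Liethetazero}.
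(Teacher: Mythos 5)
Your proposal is correct and arrives at the same formula, but one key step takes a genuinely different route from the paper's. The agreement: the paper also decomposes $\pi_S^*\alpha_S = \pi^*\alpha + \sum_i \beta_i\wedge\theta^i$ and extracts the coefficients from the condition $\mathring{\xi}\hook\pi_S^*\alpha_S = 0$ by solving a recursion $\beta_i = (-1)^{k-i+1}(\tilde{\xi}\circ\pi^*a^{-1})\hook\beta_{i-1}$ --- computationally this is exactly the telescoping you describe, run in the opposite direction, so the sign bookkeeping you defer is confirmed to work out; and both arguments reduce basicness of the semi-basic form $\bWalpha$ to $\cL^{\nabla}_{\mathring{\xi}}\bWalpha = 0$. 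The divergence is in how that last condition is related to $\cL^{\nabla}_{\xi}\alpha = 0$. The paper expands $\cL^{\nabla}_{\mathring{\xi}}\bWalpha$ term by term and must then prove $\cL^{\nabla}_{\xi}\bigl((\xi\circ a^{-1}\hook)^i\alpha\bigr) = 0$ for every $i$ by an iterated-contraction computation that invokes the torsion-free condition \eqref{eq:xinablatorsionfree} and $\xi^*\omega = 0$ once more; this is the longest computation in the proof. You bypass it entirely: since $\iota_D\bWalpha = 0$ and $\mathring{\xi}(\pi^*E)$ is involutive, $\cL_D\bWalpha$ is again annihilated by $\mathring{\xi}(\pi^*E)$ and hence determined by its restriction to $\cH$, where only $\pi^*(\cL_{\xi e}\alpha)$ survives because every correction term retains either an undifferentiated $\theta$-factor or a factor $\cL_D\theta^{\mu}$, both vanishing on $\cH$ by \eqref{eq:Liethetazero}. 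This is a cleaner argument; the cost is hidden rather than removed, since the involutivity and $\cL^{\nabla}_{\mathring{\xi}}\theta = 0$ that you lean on already consumed the hypotheses $\xi^*\omega = 0$ and $\xi\hook\omega = -\dN a$ in Theorem~\ref{th:liftingbundlemorphisms}. Your abstract uniqueness step via the splitting $TP = \cH\oplus\mathring{\xi}(\pi^*E)$ is a mild and correct repackaging of the paper's recursion-based uniqueness.
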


\begin{proof}
  Suppose that $\alpha\in \Omega^k M$ is $\cH$-related to
  $\alpha_S\in \Omega^k S$.  If we decompose $\pi_S^*\alpha_S$ with
  respect to $\cH$ and $\cV$, we get by definition
  \begin{equation*}
    \pi_S^*\alpha_S = \pi^*\alpha+\sum_{i = 1}^k \beta_i\wedge \theta^i
  \end{equation*}
  for certain
  $\beta_i\in \Omega^{k-i}\left(P,\Lambda^i (\pi^*F)^*\right)$ with
  $\beta_i|_\cV = 0$.  Here $\theta^i$ denotes the element
  $\Lambda^i\theta\in\Gamma(\End(\Lambda^i TP,\Lambda^i
  \pi^*F))\cong\Omega^i (P,\Lambda^i \pi^* F)$ pointwise induced by
  $\theta\in \Omega^1 (P,\pi^*F)\cong \Gamma(\End(TP,\pi^*F))$.  For
  $X_1,\dots,X_i\in \vf(P)$, this means
  $\theta^i(X_1\wedge\dots\wedge X_i) = \theta(X_1)\wedge \dots \wedge
  \theta(X_i)$.

  As $\mathring{\xi}\hook \pi_S^*\alpha_S = 0$, we get
  \begin{equation*}
    0 = \pi^* (\xi\hook \alpha)
    + \sum_{i = 1}^k (\tilde{\xi}\hook \beta_i)\wedge \theta^i
    + \sum_{i = 1}^k (-1)^{k-i} \beta_i \wedge ((\rho\circ \pi^*
    a)\hook \theta^i)
  \end{equation*}
  As $(\rho\circ\pi^*a)\hook\theta^i = \pi^*a\,\theta^{i-1}$, this
  tells us $\pi^*(\xi\hook \alpha) = (-1)^k \beta_1\pi^*a$ and
  $(\tilde{\xi}\hook \beta_{i-1})\wedge \theta^{i-1} = (-1)^{k-i+1}
  \beta_i\wedge \pi^*a\, \theta^{i-1}$ for $i = 2,\dots,k$.  It
  follows that
  \begin{equation*}
    \beta_1 = (-1)^k \pi^*\left(\left(\xi\circ a^{-1}\right)\hook
      \alpha\right)
  \end{equation*}
  and
  \begin{equation*}
    \begin{split}
      \MoveEqLeft \beta_i(X_1,\dots,X_{k-i})(\pi^*a(e),f_1,\dots,f_{i-1})\\
   & = (-1)^{k-i+1} \beta_{i-1}
     \bigl(\tilde{\xi}(e),X_1,\dots,X_{k-i}\bigr)(f_1,\dots,f_{i-1})
    \end{split}
  \end{equation*}
  for $e\in \Gamma(\pi^*E)$, $X_1,\dots,X_{k-i}\in \vf(P)$,
  $f_1,\dots,f_{i-1}\in \Gamma(\pi^*F)$ and $i = 2,\dots,k$.  As
  $a$ is invertible, we may write $e = \pi^*a^{-1}(f_0)$ and
  see that the last condition says
  \begin{equation*}
    \beta_i = (-1)^{k-i+1}(\tilde{\xi}\circ \pi^*a^{-1}) \hook \beta_{i-1},
  \end{equation*}
  for $i = 2,\dots,k$.  Note that there is also the implicit statement
  that both sides are anti-symmetric when evaluated on $i$
  sections of $\pi^*F$.  Recursion now gives
  \begin{equation*}
    \beta_i = (-1)^{i(2k+1-i)/2} \pi^*\bigl((\xi\circ a^{-1})^i\hook \alpha\bigr)
  \end{equation*}
  and this is, in fact, anti-symmetric in sections of $\pi^*F$.  Thus the
  claimed formula for $\alpha_S$ holds, when $\alpha_S$ exists.

  So, to finish the proof, we have to consider the
  semi-basic $k$-form
  \begin{equation*}
    \bWalpha\coloneqq \pi^* \alpha
    + \sum_{i = 1}^k (-1)^{i(2k+1-i)/2} \pi^*\bigl((\xi\circ
    a^{-1}\hook)^i \alpha\bigr) \wedge \theta^i
  \end{equation*}
  and have to determine when this is basic for~$\pi_S$ as exactly then
  $\bWalpha$ will be the pull-back of a unique $\alpha_S$
  on~$S$.  For $\bWalpha$ to be basic requires
  $\cL_{\mathring\xi e}\bWalpha = 0$ for each
  $e \in \Gamma(\pi^*E)$.  This condition is
  \begin{equation*}
    0 = \cL_{\mathring{\xi} e} \bWalpha
    = \mathring{\xi}e\hook d\bWalpha
    = \mathring{\xi}e\hook d\bWalpha + \dN(\mathring{\xi}\hook
    \bWalpha)e
    = (\cL_{\mathring{\xi}}^{\nabla} \bWalpha)e,
  \end{equation*}
  which just says $\cL^\nabla_{\mathring\xi}\bWalpha = 0$.  Now
  $\cL_{\mathring{\xi}}^{\nabla}\theta = 0$, so we have
  \begin{equation*}
    \cL_{\mathring{\xi}}^{\nabla} \bWalpha
    = \pi^*(\cL_{\xi}^{\nabla}\alpha)
    + \sum_{i = 1}^k (-1)^{i(2k+1-i)/2}
    \pi^*\Bigl(\cL_{\xi}^{\nabla}\bigl((\xi\circ a^{-1}\hook)^i
    \alpha\bigr)\Bigr)\wedge \theta^i.
  \end{equation*}
  Taking the horizontal part, we see that
  $\cL_{\mathring{\xi}}^\nabla \bWalpha = 0$ implies
  $\cL_{\xi}^{\nabla}\alpha = 0$.

  Conversely, assume that $\cL_{\xi}^{\nabla}\alpha = 0$.  For an
  $\ell$-form $\tau$ with values in $(F^*)^{\otimes r}$ and local
  parallel sections $e$ of~$E$, $f_j$ of~$F$, note
  $(\cL_{\xi}^{\nabla}\tau) (e,f_1,\dots,f_r) = \cL_{\xi e}
  (\tau(f_1,\dots,f_r))$.  Writing $e_i\coloneqq a^{-1}(f_i)$,
  $\xi(i) = \xi(e_i)$ and $\iota_X = X\hook$, we find
  \begin{equation*}
    \begin{split}
      \MoveEqLeft
      \bigl(\cL_{\xi}^{\nabla}\bigl((\xi\circ a^{-1}\hook)^i
      \alpha\bigr)\bigr) (e,f_1,\dots,f_i) \\
   &=  \cL_{\xi e} (\iota_{\xi(i)}\dots \iota_{\xi(1)}\alpha)\\
   &= \iota_{\xi(i)} \cL_{\xi e} (\iota_{\xi({i-1})} \dots
     \iota_{\xi(1)}\alpha)
     + \iota_{[\xi e,\xi(i)]}\iota_{\xi({i-1})}\dots
     \iota_{\xi(1)}\alpha\\
   &= \iota_{\xi(i)}\dots \iota_{\xi(1)}\cL_{\xi e}\alpha
     + \sum_{j = 1}^i (-1)^{j-1} \iota_{\xi(i)} \dots
     \widehat{\iota_{\xi(j)}} \dots \iota_{\xi(1)}
     \iota_{[\xi e,\xi(j)]} \alpha\\
   &= \iota_{\xi(i)}\dots \iota_{\xi(1)}(\cL_{\xi}^{\nabla}\alpha)e
     +\sum_{j = 1}^i (-1)^{j-1} \iota_{\xi(i)} \dots
     \widehat{\iota_{\xi(j)}} \dots \iota_{\xi(1)}
     \iota_{\xi(\nabla_{\xi e} e_j-\nabla_{\xi(j)} e)} \alpha\\
   &= \sum_{j = 1}^i (-1)^{j-1} \iota_{\xi(i)} \dots
     \widehat{\iota_{\xi(j)}} \dots \iota_{\xi(1)}
     \iota_{\xi(\nabla_{\xi e} (a^{-1} f_j))}\alpha
    \end{split}
  \end{equation*}
  using the torsion-free condition \eqref{eq:xinablatorsionfree} in
  the penultimate step.  But we compute
  \begin{equation*}
    \begin{split}
      \nabla_{\xi e}(a^{-1}f_j) &= (\nabla_{\xi e} a^{-1})f_j
                                  = - (a^{-1}\circ \nabla_{\xi e} a\circ a^{-1})f_j\\
                                &= a^{-1}\bigl((\xi\hook \omega)(\xi
                                  e)(e_j)\bigr) = a^{-1}(\omega(\xi e_j,\xi e)) = 0,
    \end{split}
  \end{equation*}
  and so
  $\bigl(\cL_{\xi}^{\nabla}\bigl((\xi\circ a^{-1}\hook)^i
  \alpha\bigr)\bigr) (e,f_1,\dots,f_i) = 0$.  As the local parallel
  sections span $E$ and $F$, we get
  $\cL_{\xi}^{\nabla}\bigl((\xi\circ a^{-1}\hook)^i \alpha\bigr) = 0$
  and conclude that $\cL_{\xi}^{\nabla}\bWalpha = 0$.
\end{proof}

Next we consider the differentials of $\cH$-related $k$-forms:

\begin{corollary}\label{cor:differentials}
  Let $\alpha\in \Omega^k M$ be a $k$-form on $M$ with
  $\cL_{\xi}^{\nabla} \alpha = 0$ and take $\alpha_S\in \Omega^k S$
  with $\alpha \Hrel \alpha_S$.  Then
  \begin{equation*}
    d\alpha-\bigl(\xi\circ a^{-1}\hook \alpha\bigr)\wedge \omega\Hrel d\alpha_S.
  \end{equation*}
\end{corollary}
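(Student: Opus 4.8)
```latex
The plan is to verify the $\cH$-relation directly from the definition, showing that
$\pi^*\bigl(d\alpha - (\xi\circ a^{-1}\hook\alpha)\wedge\omega\bigr)$ and
$\pi_S^*(d\alpha_S)$ agree on the horizontal subbundle~$\cH$. Since
$\alpha\Hrel\alpha_S$ means $\pi^*\alpha|_\cH = \pi_S^*\alpha_S|_\cH$, and since $d$
commutes with pull-back, the natural first move is to compute $d$ of the formula for
$\pi_S^*\alpha_S$ given in Proposition~\ref{pro:uniqueHrelated} and then restrict to
$\cH$. The key simplification is that $\theta|_\cH = 0$, so any term in
$d(\pi_S^*\alpha_S)$ still carrying a factor of $\theta$ will vanish on horizontal
vectors; only the terms where $d$ falls on a single $\theta^i$ factor (killing it and
leaving a $\dN\theta = \pi^*\omega$) can contribute on~$\cH$, besides the bare
$\pi^*d\alpha$ term.

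First I would apply $d = \pi_S^*d$ to the expression
$\pi_S^*\alpha_S = \pi^*\alpha + \sum_{i=1}^k (-1)^{i(2k+1-i)/2}
\pi^*\bigl((\xi\circ a^{-1}\hook)^i\alpha\bigr)\wedge\theta^i$,
using the Leibniz rule and the flat exterior derivative $\dN$ to organise the
differentiation of the $F$-valued forms $\pi^*((\xi\circ a^{-1}\hook)^i\alpha)$
against the powers $\theta^i$. The clean bookkeeping tool here is that
$d\theta^i$ expands in terms of $\dN\theta = \pi^*\omega$ and correction terms
involving the connection, so each $\theta^i$ contributes one summand where a factor of
$\pi^*\omega$ replaces a factor of $\theta$, plus terms retaining at least one~$\theta$.
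Restricting to $\cH$ annihilates every remaining $\theta$, so on~$\cH$ I expect to be
left with $\pi^*d\alpha$ coming from the leading term together with exactly the $i=1$
contribution $-\pi^*\bigl((\xi\circ a^{-1}\hook)\alpha\bigr)\wedge\pi^*\omega$ arising
when $d$ hits $\theta^1$; matching signs, this is precisely
$\pi^*\bigl(d\alpha - (\xi\circ a^{-1}\hook\alpha)\wedge\omega\bigr)|_\cH$.

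The main obstacle will be the careful treatment of the higher terms $i\geq 2$: one must
check that after differentiation all their contributions either retain a surviving
$\theta$ (hence die on~$\cH$) or cancel among themselves, so that only the $i=0$ and
$i=1$ pieces persist horizontally. This requires the identity
$\nabla_{\xi e}(a^{-1}f_j)=0$ established inside the proof of
Proposition~\ref{pro:uniqueHrelated}, which encodes the compatibility
$\xi\hook\omega = -\dN a$, together with $\dN\theta = \pi^*\omega$ and
$\cL^\nabla_{\mathring\xi}\theta = 0$. An alternative, cleaner route that I would likely
prefer is to first verify that $\cL^\nabla_\xi(d\alpha - (\xi\circ a^{-1}\hook\alpha)
\wedge\omega) = 0$ so that Proposition~\ref{pro:uniqueHrelated} applies and guarantees
the existence of a unique $\cH$-related form, and then identify that form with
$d\alpha_S$ by comparing the two semi-basic representatives on~$\cH$. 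Since
$\cL^\nabla_\xi$ commutes appropriately with $d$ and with contraction by
$\xi\circ a^{-1}$ (using $\cL^\nabla_\xi\alpha = 0$, $\cL^\nabla_\xi\omega = 0$ and the
torsion-free condition \eqref{eq:xinablatorsionfree}), this reduces the problem to a
finite algebraic verification, and the horizontal comparison then follows because both
$d(\pi_S^*\alpha_S)$ and the pull-back of the claimed form agree where $\theta$ vanishes.
```
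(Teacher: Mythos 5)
Your proposal is correct and essentially reproduces the paper's argument: the "alternative, cleaner route" you say you would prefer (check $\cL^\nabla_\xi\bigl(d\alpha-(\xi\circ a^{-1}\hook\alpha)\wedge\omega\bigr)=0$ so that Proposition~\ref{pro:uniqueHrelated} applies, then identify the resulting form with $d\alpha_S$ by differentiating the explicit formula for $\pi_S^*\alpha_S$ and restricting to $\cH$, where only the $i=0$ term and the $i=1$ term with $d$ hitting $\theta$ survive) is exactly the proof given in the paper. Your only over-caution is worrying about cancellations among the $i\ge 2$ terms: each such contribution retains at least one factor of $\theta$ and so vanishes on $\cH$ outright, with no cancellation needed.
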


\begin{proof}
  As $\cL_{\xi}^{\nabla} \alpha = 0$, $\cL_{\xi}^{\nabla}\omega = 0$
  and $\cL_{\xi}^{\nabla}(\xi\circ a^{-1}\hook \alpha)=0$ by the proof
  of Proposition~\ref{pro:uniqueHrelated}, we have
  $\cL_{\xi}^{\nabla}\bigl(d\alpha-(\xi\circ a^{-1}\hook \alpha)\wedge
  \omega\bigr)=0$.  So Proposition~\ref{pro:uniqueHrelated} tells us
  that there exists a unique $(k+1)$-form on $S$ which is
  $\cH$-related to
  $d\alpha-\left(\xi\circ a^{-1}\hook \alpha\right)\wedge \omega$.
  This $(k+1)$-form has to be equal to $d \alpha_S$ as differentiating
  the formula for $\pi_S^*\alpha_S$ in
  Proposition~\ref{pro:uniqueHrelated} gives us horizontally
  \begin{equation*}
    \pi_S^* d \alpha_S
    = \pi^* d\alpha - \pi^*\left(\xi\circ a^{-1}\hook \alpha\right)
    \wedge \dN\theta
    = \pi^*\left(d\alpha-\left(\xi\circ a^{-1}\hook \alpha\right)
      \wedge \omega\right),
  \end{equation*}
  as claimed.
\end{proof}

\subsection{Vector fields and almost complex structures}
\label{subsec:vfsacs}

With the notation from the previous section, we say that a vector
field $X$ on~$M$ is \emph{$\cH$-related} to a vector field $X_S$
on~$S$ if and only if their horizontal lifts $\widetilde{X}$,
$\widehat{X_S}$ to~$P$ agree.

For given $X\in \vf(M)$ there is at most one $X_S\in \vf(S)$ with
$X\Hrel X_S$.  Furthermore, $X_S$ exists if and only if
$[\widetilde{X},\Gamma(\mathring{\xi}(\pi^*E))]\subset
\Gamma(\mathring{\xi}(\pi^*E))$.  Now from
equation~\eqref{eq:Liethetazero} we obtain that for given
$e\in \Gamma(E)$ the vertical component of
$[\widetilde{X},\mathring{\xi}(\pi^*e)]$ equals the vertical component
of
$\mathring{\xi}(\nabla_{\widetilde{X}} \pi^*e) =
\mathring{\xi}(\pi^*(\nabla_X e))$ and the horizontal component of
$[\widetilde{X},\mathring{\xi}(\pi^*e)]$ equals
$\widetilde{[X,\xi(e)]}$. So the existence of $X_S\in \vf(S)$ with
$X\Hrel X_S$ is equivalent to $[\xi(e),X]=-\xi(\nabla_X e)$ for all
$e\in \Gamma(E)$. Defining
$(\cL_{\xi}^\nabla X)(e) \coloneqq [\xi(e),X]+\xi(\nabla_X e)$ for
$e\in \Gamma(E)$, the $\cH$-related vector field $X_S$ exists
if and only if
\begin{equation*}
  \cL_{\xi}^{\nabla} X=0.
\end{equation*}
Note that $\cL_{\xi}^{\nabla}$ behaves as the usual Lie derivative
under contractions:
$(\cL_{\xi}^{\nabla} \alpha)(X) = \cL_{\xi}^{\nabla}
(\alpha(X))-\alpha(\cL_{\xi}^{\nabla}X)$ for all $X\in \vf(M)$ and all
$\alpha\in \Omega^1 M$.

After these preliminaries, we consider Lie brackets of $\cH$-related
vector fields:

\begin{lemma}\label{le:commutatorVF}
  Suppose the vector fields $X,Y\in \vf(M)$, $X_S,Y_S\in \vf(S)$
  satisfy $X\Hrel X_S$ and $Y\Hrel Y_S$.  Then the identity
  \begin{equation*}
    [X,Y]+\xi a^{-1}\omega(X,Y)\Hrel [X_S,Y_S]
  \end{equation*}
  holds.
\end{lemma}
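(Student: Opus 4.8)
The plan is to lift everything to the total space $P$ and exploit that the single horizontal distribution $\cH = \ker\theta$ is a common complement for \emph{both} submersions $\pi$ and $\pi_S$. First I would record that, since $\theta(\mathring{\xi}\pi^*s) = \pi^*(as)$ is nonzero for $s \neq 0$ (invertibility of $a$), one has $\cH \cap \mathring{\xi}(\pi^*E) = 0$, hence by rank $TP = \cH \oplus \cV = \cH \oplus \mathring{\xi}(\pi^*E)$. Writing $\bar X \coloneqq \widetilde X = \widehat{X_S}$ and $\bar Y \coloneqq \widetilde Y = \widehat{Y_S}$ for the common horizontal lifts, which agree precisely because $X \Hrel X_S$ and $Y \Hrel Y_S$, the bracket $[\bar X, \bar Y] \in \Gamma(\cH)\text{-valued field}$ is simultaneously $\pi$-related to $[X,Y]$ and $\pi_S$-related to $[X_S,Y_S]$. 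Projecting to $\cH$ along the two different verticals then recovers $\widetilde{[X,Y]}$ and $\widehat{[X_S,Y_S]}$ respectively, so the claim reduces to computing how these two horizontal projections of the single vector field $[\bar X,\bar Y]$ differ.

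Next I would compute the $\pi$-vertical component. As $\bar X, \bar Y \in \Gamma(\ker\theta)$ and $\dN\theta = \pi^*\omega$, the defining formula for $\dN\theta$ collapses to $\theta[\bar X,\bar Y] = -(\pi^*\omega)(\bar X,\bar Y) = -\pi^*(\omega(X,Y))$, and applying the vertical projection $\hat\theta = \rho\circ\theta$ gives $\proj_{\cV}[\bar X,\bar Y] = -\rho\pi^*(\omega(X,Y))$. Thus the $\cH\oplus\cV$ splitting reads
\[
  [\bar X,\bar Y] = \widetilde{[X,Y]} - \rho\pi^*(\omega(X,Y)).
\]
To pass to the $\pi_S$-splitting I would use that $\mathring{\xi} = \tilde{\xi} + \rho\circ\pi^*a$ with $a$ invertible yields, for every $f \in \Gamma(F)$, the identity $\rho\pi^*f = \mathring{\xi}\pi^*(a^{-1}f) - \tilde{\xi}\pi^*(a^{-1}f)$, whose first term is $\pi_S$-vertical and whose second is $\tilde{\xi}\pi^*(a^{-1}f) = \widetilde{\xi a^{-1}f} \in \Gamma(\cH)$. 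Substituting $f = \omega(X,Y)$ and using linearity of the horizontal lift produces the clean splitting
\[
  [\bar X,\bar Y] = \widetilde{[X,Y] + \xi a^{-1}\omega(X,Y)} - \mathring{\xi}\pi^*\bigl(a^{-1}\omega(X,Y)\bigr),
\]
into its $\cH$- and $\mathring{\xi}(\pi^*E)$-parts. Reading off the $\pi_S$-horizontal part gives $\widehat{[X_S,Y_S]} = \proj_{\cH}^{\pi_S}[\bar X,\bar Y] = \widetilde{[X,Y] + \xi a^{-1}\omega(X,Y)}$, which is exactly the asserted relation $[X,Y] + \xi a^{-1}\omega(X,Y) \Hrel [X_S,Y_S]$.

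I expect the only real subtlety to be the bookkeeping with the two complementary decompositions that share the horizontal bundle $\cH$ but differ in their verticals $\cV$ and $\mathring{\xi}(\pi^*E)$: the extra term $\xi a^{-1}\omega(X,Y)$ is precisely the correction produced by switching from one vertical projection to the other, and getting its sign and the placement of $a^{-1}$ correct is where care is required. Everything else is formal, using only $\theta\circ\rho = \id$, $d\pi\circ\rho = 0$, $\dN\theta = \pi^*\omega$, and the fact that horizontal lifts of $\pi$- (respectively $\pi_S$-) related fields recover the corresponding $\cH$-projections; notably, the conditions $\xi^*\omega = 0$ and the torsion-free hypotheses are not invoked directly here, as they have already been absorbed into the existence of $X_S$, $Y_S$ and of the shear~$S$.
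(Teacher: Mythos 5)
Your proposal is correct and follows essentially the same route as the paper: both decompose the common horizontal lift $[\widetilde X,\widetilde Y]=[\widehat{X_S},\widehat{Y_S}]$ with respect to the two splittings of $TP$ that share $\cH$ but have verticals $\cV=\rho(\pi^*F)$ and $\mathring{\xi}(\pi^*E)$, using $\theta[\widetilde X,\widetilde Y]=-\pi^*(\omega(X,Y))$ from $\dN\theta=\pi^*\omega$ and the identity $\rho\circ\pi^*a=\mathring{\xi}-\tilde{\xi}$ to compare the two horizontal projections. The signs and the placement of $a^{-1}$ match the paper's computation (only the stray phrase \qq{$[\bar X,\bar Y]\in\Gamma(\cH)$-valued field} should be deleted, since the bracket is of course not horizontal and your argument does not use that).
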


\begin{proof}
  Considering the decomposition into horizontal and vertical subspaces
  of $\pi$, we have
  $[\widetilde{X},\widetilde{Y}] = \widetilde{[X,Y]}+\rho\theta
  [\widetilde{X},\widetilde{Y}]$.  For $\pi_S$, note that the
  projection to the vertical subspace of $\pi_S$ is given by
  $\mathring{\xi}\pi^*a^{-1}\theta = \tilde{\xi}
  \pi^*a^{-1}\theta+\rho\theta$.  This gives
  \begin{equation*}
    \begin{split}
      [\widetilde{X},\widetilde{Y}]
      &= [\widehat{X_S},\widehat{Y_S}]
        = \widehat{[X_S,Y_S]}
        + \tilde{\xi}\pi^*a^{-1}\theta[\widehat{X_S},\widehat{Y_S}]
        + \rho\theta [\widehat{X_S},\widehat{Y_S}]\\
      &= \widehat{[X_S,Y_S]}
        - \tilde{\xi}\pi^*(a^{-1}\omega(X,Y))
        + [\widetilde{X},\widetilde{Y}]
        - \widetilde{[X,Y]}.
    \end{split}
  \end{equation*}
  Thus
  $\bigl([X,Y]+\xi a^{-1}\omega(X,Y)\bigr)^{\sim} =
  \widehat{[X_S,Y_S]}$ as claimed.
\end{proof}

Next, we consider almost complex structures $I$ on $M$ and $I_S$ on
$S$ and say that they are $\cH$-related if and only if for all
$p\in P$ and all $v\in T_{\pi(p)}M$ we have
$\widetilde{Iv}=(I_S d\pi_S\tilde{v})\spwhat$.  Then, given an almost
complex structure $I$ on $M$, there exists an $\cH$-related almost
complex structure on $S$ if and only if $\cL_{\xi}^{\nabla} I=0$,
where we extend $\cL_{\xi}^{\nabla}$ in the usual way to
$(1,1)$-tensors.  Moreover, the definition of the Nijenhuis tensor
$N_I$~of an almost complex structure $I$ and
Lemma~\ref{le:commutatorVF} yield:

\begin{proposition}\label{pro:Nijenhuis}
  For almost complex structures $I$ on $M$ and $I_S$ on $S$ with
  $I\Hrel I_S$, the Nijenhuis tensors are related by
  \begin{equation*}
    N_I - \mathcal{F} - I\mathcal{F}(I\any,\any)
    - I\mathcal{F}(\any,I\any) + \mathcal{F}(I\any,I\any)
    \Hrel N_{I_S},
  \end{equation*}
  where $\mathcal{F} = \xi a^{-1} \omega$.  \qed
\end{proposition}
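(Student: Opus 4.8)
The plan is to verify the stated tensor identity by evaluating both sides on $\cH$-related vector fields. Every term occurring is a genuine tensor: $N_{I_S}$ is the Nijenhuis tensor of $I_S$, while $\mathcal{F}=\xi a^{-1}\omega$ together with $I$ and $I_S$ are tensorial in their arguments. Hence the claimed $\cH$-relation is a pointwise, $\bR$-multilinear statement on the horizontal subbundle $\cH$. Since horizontal lifts of vector fields span $\cH$ pointwise, it suffices to check the identity on pairs $X,Y\in\vf(M)$ and $X_S,Y_S\in\vf(S)$ with $X\Hrel X_S$ and $Y\Hrel Y_S$.

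For such fields I would first record that $I\Hrel I_S$ yields $IX\Hrel I_SX_S$ and $IY\Hrel I_SY_S$: this is exactly the statement that $\cL_{\xi}^{\nabla}I=0$ permits $I$ to be transferred, together with the compatibility of $\Hrel$ with applying $I$ (resp.\ $I_S$). Writing the Nijenhuis tensor in its four-term form
\begin{equation*}
  N_{I_S}(X_S,Y_S)=[I_SX_S,I_SY_S]-I_S[I_SX_S,Y_S]-I_S[X_S,I_SY_S]-[X_S,Y_S],
\end{equation*}
I would then apply Lemma~\ref{le:commutatorVF} to each of the four brackets on the right. Each bracket on $S$ is $\cH$-related to the corresponding bracket on $M$ corrected by a value of $\mathcal{F}$: thus $[X_S,Y_S]$ comes from $[X,Y]+\mathcal{F}(X,Y)$, $[I_SX_S,I_SY_S]$ from $[IX,IY]+\mathcal{F}(IX,IY)$, and the mixed brackets from $[IX,Y]+\mathcal{F}(IX,Y)$ and $[X,IY]+\mathcal{F}(X,IY)$, using $IX\Hrel I_SX_S$ and $IY\Hrel I_SY_S$ before invoking the Lemma.

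Applying $I_S$ to an $\cH$-related field corresponds to applying $I$ on $M$, and $\Hrel$ is $\bR$-linear, so the four contributions combine. Collecting the four bracket terms reproduces exactly $N_I(X,Y)=[IX,IY]-I[IX,Y]-I[X,IY]-[X,Y]$, while the four $\mathcal{F}$-terms assemble into $-\mathcal{F}(X,Y)-I\mathcal{F}(IX,Y)-I\mathcal{F}(X,IY)+\mathcal{F}(IX,IY)$, that is, into $-\mathcal{F}-I\mathcal{F}(I\any,\any)-I\mathcal{F}(\any,I\any)+\mathcal{F}(I\any,I\any)$. This is precisely the claimed $\cH$-related preimage of $N_{I_S}$, finishing the argument.

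The computation itself is routine once Lemma~\ref{le:commutatorVF} is in hand; the only genuine points requiring care are the sign bookkeeping in the chosen Nijenhuis convention—one must confirm that the signs of the four $\mathcal{F}$-terms match the statement, which they do with the four-term convention above—and the justification that testing on $\cH$-related vector fields is enough. For the latter I would appeal to the tensoriality of all terms, which reduces the identity to a pointwise statement on $\cH$, together with the observation that locally there is a sufficient supply of $\cH$-relatable vector fields (those $X$ with $\cL_{\xi}^{\nabla}X=0$) whose horizontal lifts span $\cH$; this is the same reduction already implicit in Lemma~\ref{le:commutatorVF}.
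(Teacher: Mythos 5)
Your argument is correct and is precisely the one the paper intends: the paper offers no written proof beyond the remark that the proposition follows from the definition of the Nijenhuis tensor and Lemma~\ref{le:commutatorVF}, which is exactly your computation (expand $N_{I_S}$ in the four-bracket form, use $IX\Hrel I_SX_S$, apply the lemma to each bracket, and collect the four $\mathcal{F}$-terms), and your sign bookkeeping matches the stated formula. The one subtle point you flag --- having a sufficient local supply of $\cH$-relatable vector fields so that tensoriality applies --- is treated at least as carefully in your write-up as in the paper, which leaves it implicit.
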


\subsection{Duality}
\label{subsec:duality}
In this section, we assume that we are in the situation of
\S\ref{sec:lift} and show how we can then invert the shear
construction.

A first necessary condition to invert the shear is to construct flat
vector bundles $E_S$ and $F_S$ over the shear $S$ such that
$\pi_S^*E_S \cong \pi^*F$ and $\pi_S^*F_S \cong \pi^*E$ as bundles with flat
connections. If we have such
vector bundles and if we fix identifications $\pi_S^*E_S \cong \pi^*F$ and $\pi_S^*F_S \cong \pi^*E$,
we may extend $\Hrel$ to $k$-forms and vector fields
with values in tensor powers of these bundles. For example, we say that
$\alpha\in \Omega^k(M,E^{\otimes r}\otimes F^{\otimes s})$ is
\emph{$\cH$-related} to
$\alpha_S\in \Omega^k(S,E_S^{\otimes s}\otimes F_S^{\otimes r})$ if
and only if $\pi^*\alpha|_{\cH}=\pi_S^*\alpha_S|_{\cH}$.

\begin{theorem}\label{th:duality}
  Suppose $(M,E,\xi,F,a,\omega)$ shears via a total space
  $(P,\theta,\rho)$ to~$S$.  Suppose in addition that
  $\pi^*E$ and $\pi^*F$ can be trivialised by flat sections in
  a neighbourhood of each leaf of~$\mathring\xi$ on~$P$.  Then
  there is shear data $(\xi_S, a_S,\omega_S)$ on~$S$ realising
  $M$ as the shear of~$S$.

  More precisely, there are flat bundles $E_S,F_S \to S$, a
  torsion-free bundle map $\xi_S\colon E_S \to TS$, a two-form
  $\omega_S \in \Omega^2(S,F_S)$ and an
  $a_S \in \Omega^0(S,E_S^*\otimes F_S)$ such that
  \begin{enumerate}[\upshape(a)]
  \item as flat bundles $\pi_S^*E_S \cong \pi^*F$ and
    $\pi_S^*F_S \cong \pi^*E$,
  \item $a^{-1} \Hrel a_S$, $a^{-1}\omega \Hrel \omega_S$,
    $- \xi \circ a^{-1} \Hrel \xi_S$,
  \item $M$ is the shear of $S$
    via~$(P, \theta_S = (\pi^*a^{-1})\theta, \rho_S = \mathring\xi)
   $.
  \end{enumerate}
\end{theorem}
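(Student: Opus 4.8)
The plan is to build the dual data directly on~$P$ and then push it down to~$S$, exploiting the symmetry between the two foliations $\cV=\ker d\pi=\rho(\pi^*F)$ and $\cV_S=\ker d\pi_S=\mathring\xi(\pi^*E)$ of the common horizontal bundle $\cH=\ker\theta$. Before anything else I would record the one consequence of $\xi^*\omega=0$ that drives the argument: since $(\nabla_{\xi e}a)e'=(\dN a)(\xi e)(e')=-\omega(\xi e',\xi e)=0$, we have $\nabla_{\xi(\any)}a=0$ and hence $\nabla_{\xi(\any)}a^{-1}=0$. Throughout I will use that $TP=\cH\oplus\mathring\xi(\pi^*E)$ (the vertical part $\rho\circ\pi^*a$ of $\mathring\xi$ fills out $\cV$ since $a$ is invertible), so $\cH$ is also a horizontal complement for~$\pi_S$.

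First I would construct $E_S$ and $F_S$, and this is where the extra hypothesis is indispensable, so I expect it to be the main obstacle. The flat bundles $\pi^*F$ and $\pi^*E$ are basic for~$\pi$; to descend them along $\pi_S$ one notes that a flat section is $\nabla$-parallel in every direction, in particular along the leaves of $\mathring\xi$, so the flat frames supplied by the hypothesis are leaf-constant and glue to flat frames on~$S$. This yields flat bundles $E_S,F_S\to S$ with $\pi_S^*E_S\cong\pi^*F$ and $\pi_S^*F_S\cong\pi^*E$ as bundles with flat connection, which is part~(a); from here on I identify these bundles silently. The same leaf-parallelism, applied to the section $\pi^*a^{-1}$ (which satisfies $\nabla_{\mathring\xi}\pi^*a^{-1}=0$ by the displayed identity and $d\pi\circ\rho=0$), lets $a^{-1}$ descend to an invertible $a_S\in\Omega^0(S,E_S^*\otimes F_S)$ with $a^{-1}\Hrel a_S$.

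Next I would verify that $(P,\theta_S,\rho_S)$, with $\rho_S=\mathring\xi$ and $\theta_S=(\pi^*a^{-1})\theta$, is a shear total space (Definition~\ref{def:total-space}). Conditions \ref{item:theta-rho} and \ref{item:pi-rho} are immediate from $\theta\circ\mathring\xi=\pi^*a$, giving $\theta_S\circ\rho_S=\id_{\pi^*E}$, and from $\mathring\xi(\pi^*E)=\ker d\pi_S$. For \ref{item:d-theta} I expand $\dN\theta_S=(\dN\pi^*a^{-1})\wedge\theta+\pi^*a^{-1}\,\dN\theta$; since $\mathring\xi\hook\theta_S=\pi^*a^{-1}$ is $\cV_S$-parallel and $\cL^\nabla_{\mathring\xi}\theta=0$, one gets $\cL^\nabla_{\mathring\xi}\theta_S=0$, so $\dN\theta_S$ is $\cV_S$-horizontal, while evaluating it on two vectors of $\cH$ produces $\pi^*(a^{-1}\omega)$. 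Hence $\dN\theta_S=\pi_S^*\omega_S$ for a unique $\omega_S\in\Omega^2(S,F_S)$ with $a^{-1}\omega\Hrel\omega_S$. I then set $\xi_S$ to be the bundle map with $-\xi\circ a^{-1}\Hrel\xi_S$; this is legitimate because for a flat section $f$ of $F$ the identity $\nabla_{\xi(\any)}a^{-1}=0$ together with \eqref{eq:xinablatorsionfree} gives $\cL^\nabla_\xi\bigl(\xi(a^{-1}f)\bigr)=0$, so $-\xi(a^{-1}f)$ admits an $\cH$-related field on~$S$ by the criterion of~\S\ref{subsec:vfsacs}. This establishes~(b). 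Among the shear-data axioms of Definition~\ref{def:shear}, \ref{item:s-closure} is immediate from $\pi_S^*(\dN\omega_S)=(\dN)^2\theta_S=0$; axiom~\ref{item:s-pullback} follows because $\omega_S(\xi_S\any,\xi_S\any)$ is $\cH$-related to $a^{-1}\omega(\xi a^{-1}\any,\xi a^{-1}\any)=a^{-1}\xi^*\omega(\any,\any)=0$; and axiom~\ref{item:s-bracket} follows from Lemma~\ref{le:commutatorVF}, which together with $\xi_S^*\omega=0$ yields $[\xi_S e_1,\xi_S e_2]=0$, matching $\xi_S(\nabla_{\xi_S e_1}e_2-\nabla_{\xi_S e_2}e_1)=0$ on flat frames.

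The heart of the matter is axiom~\ref{item:s-contract}, namely $\xi_S\hook\omega_S=-\dN a_S$. Exactly as in the proof of Theorem~\ref{th:liftingbundlemorphisms}, writing $\mathring\xi_S=\tilde\xi_S+\rho_S\circ\pi_S^*a_S$ with $\mathring\xi_S\hook\theta_S=\pi^*a^{-1}$ shows this axiom is equivalent to $\cL^\nabla_{\rho}\theta_S=0$, i.e.\ to $\rho$ preserving $\theta_S$. I would check this by hand: $\theta_S\circ\rho=\pi^*a^{-1}$, while contracting $\rho$ into $\dN\theta_S=(\dN\pi^*a^{-1})\wedge\theta+\pi^*a^{-1}\,\dN\theta$ annihilates the second term (as $\pi^*\omega$ is $\cV$-horizontal) and, using $\theta\circ\rho=\id$ and $\nabla_\rho\pi^*a^{-1}=0$, turns the first into $-\dN(\pi^*a^{-1})$, so that $\cL^\nabla_\rho\theta_S=\dN(\pi^*a^{-1})-\dN(\pi^*a^{-1})=0$. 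With all four axioms in hand, it only remains to identify the dual foliation. From $\mathring\xi=\tilde\xi+\rho\circ\pi^*a$ one computes $\mathring\xi_S=\tilde\xi_S+\rho_S\circ\pi_S^*a_S=-\tilde\xi\circ\pi^*a^{-1}+\mathring\xi\circ\pi^*a^{-1}=\rho$, so the integrable distribution of the shear of~$S$ is $\mathring\xi_S(\pi_S^*E_S)=\rho(\pi^*F)=\cV=\ker d\pi$, whose leaf space is exactly~$M$. By Definition~\ref{def:shear} this exhibits $M$ as the shear of~$S$ via $(P,\theta_S,\rho_S)$, giving~(c) and finishing the proof.
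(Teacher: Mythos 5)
Your argument is correct and follows essentially the same route as the paper: the same dual total space $(P,\theta_S=(\pi^*a^{-1})\theta,\rho_S=\mathring\xi)$, the same descent of leaf-constant flat frames to produce $E_S$, $F_S$ and $a_S$, and the same computations for $\dN\theta_S$ and for the contraction axiom $\xi_S\hook\omega_S=-\dN a_S$; the only organisational difference is that you obtain $\xi_S$ top-down as the map $\cH$-related to $-\xi\circ a^{-1}$ (checking $\cL_{\xi}^{\nabla}\bigl(\xi(a^{-1}f)\bigr)=0$ on flat sections and getting torsion-freeness from Lemma~\ref{le:commutatorVF}), whereas the paper defines $\xi_S=d\pi_S\circ\rho$ and deduces the $\cH$-relation only at the end, both of which are sound. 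One small slip: $\mathring\xi\hook\theta_S=\id_{\pi^*E}$, not $\pi^*a^{-1}$ (the correct identity $\rho\hook\theta_S=\pi^*a^{-1}$ is what you use later), but this does not affect your conclusion that $\cL_{\mathring\xi}^{\nabla}\theta_S=0$.
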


\begin{proof}
  Recall that the fibres of $\pi_S\colon P \to S$ are the leaves
  of $\mathring\xi$ and that by assumption $\pi^*F$ can be
  trivialised by flat sections in a neighbourhood of each leaf. So we
  may define a locally free sheaf $\mathcal{F}$ on~$S$ by letting
  $\mathcal{F}(U)$ be those sections $\sigma$ of $\pi^*F$ over
  $\pi_S^{-1}(U)$ which are constant on the leaves under a trivialisation
        by flat sections.

  Let $E_S$ be the associated vector bundle; by construction we have
  $\pi_S^*E_S = \pi^*F$.  We give $E_S$ a flat
  connection~$\nabla$ by declaring a local section to be parallel
  if its pull-back to $\pi^*F$ is parallel.  In a similar way, we
  construct a flat bundle $F_S \to S$ with
  $\pi_S^*F_S = \pi^*E$.

  Computing
  \begin{equation*}
    \cL^\nabla_{\mathring{\xi}} \mathring{a}
    = \mathring{\xi}\hook \dN(\mathring{a})
    = -\mathring{\xi}\hook\pi^*(\xi\hook \omega)
    = -\pi^*(\xi^*\omega)=0,
  \end{equation*}
  we conclude that $\mathring{a} = \pi_S^*a_S^{-1}$ for some vector
  bundle morphism $a_S\colon E_S\to F_S$.

  To define $\xi_S\colon E_S \to TS$, we wish to push
  $\rho\colon \pi_S^*E_S = \pi^*F\to TP$ forward so that the diagram
  \begin{equation*}
    \begin{CD}
      \pi_S^*E_S @>\rho>> TP\\
      @VVV @VV d\pi_S V\\
      E_S @>\xi_S>> TS.
    \end{CD}
  \end{equation*}
  commutes.  We thus define
  $(\xi_S)_x(\Se_x)\coloneqq (d\pi_S)_p(\rho_p(p,\Se_x))$ for any
  $x\in S$, $\Se_x\in (E_S)_x$ and for some $p\in \pi_S^{-1}(x)$, but
  need to show this is independent of~$p$. To this end, let $\Se$ be
  a local parallel section of~$E_S$ extending~$\Se_x$.  Then
  $\theta\rho(\pi_S^*\Se) = \pi_S^*\Se$ is parallel, so
  \eqref{eq:commuting-actions} gives that
  $[\rho(\pi_S^*\Se),\mathring\xi(\pi^*e)] = 0$ for each local
  parallel section $e$ of~$E$.  Thus $\rho(\pi_S^*\Se)$ is
  projectable, as required, and $\xi_S$ is well-defined.

  For the torsion-free condition, let $\Se_1$ and $\Se_2$ be
  two sections of~$E_S$.  Then $\rho(\pi_S^*\Se_i)$ projects to
  $\xi_S\Se_i$, so $[\rho(\pi_S^*\Se_1),\rho(\pi_S^*\Se_2)]$ projects
  to~$[\xi_S\Se_1,\xi_S\Se_2]$.  Moreover,
  \begin{equation*}
    \nabla_{\rho(\pi_S^*\Se_1)} \pi_S^*\Se_2 - \nabla_{\rho(\pi_S^*\Se_2)}
    \pi_S^*\Se_1
    = \pi_S^*\left(\nabla_{\xi_S(\Se_1)} \Se_2 - \nabla_{\xi_S(\Se_2)} \Se_1\right)
  \end{equation*}
  and so
  $\rho(\nabla_{\rho(\pi_S^*\Se_1)}
  \pi_S^*\Se_2-\nabla_{\rho(\pi_S^*\Se_2)}\pi_S^*\Se_1)$ projects to
  $\xi_S(\nabla_{\xi_S(\Se_1)} \Se_2-\nabla_{\xi_S(\Se_2)} \Se_1)$.
        Thus, $\xi_S$ is torsion-free as $\rho$ is torsion-free by
        Remark \ref{re:fibresaffinemfds}.

  Defining
  $\theta_S\coloneqq \mathring{a}^{-1}\circ \theta\in
  \Omega^1(P,\pi_S^*F_S)$ and $\rho_S \coloneqq \mathring\xi$, we
  verify the conditions~\ref{item:theta-rho}--\ref{item:d-theta} of
  Definition~\ref{def:total-space}.  Firstly,
  $\theta_S\circ\rho_S = \mathring a^{-1}\circ \theta \circ
  (\widetilde\xi+\rho\circ \mathring{a}) = \mathring a^{-1}\theta\rho
  \mathring a = \id_{\pi^*E} = \id_{\pi_S^*F_S}$.  Furthermore, $S$
  is defined to be the leaf space of $\mathring\xi$, so
  $d\pi_S\circ \mathring{\xi}=0$. Now a computation gives
  \begin{equation*}
    \begin{split}
      \dN\theta_S
      &= \dN\mathring{a}^{-1}\wedge\theta + \mathring{a}^{-1}\dN\theta
        = \pi^*(a^{-1}\xi\hook \omega a^{-1})\wedge \theta
        + \pi^*(a^{-1} \omega)\\
      &= \pi^*(a^{-1}\xi\hook \omega)\wedge \theta_S
        + \pi^*(a^{-1} \omega).
    \end{split}
  \end{equation*}
  From this we conclude
  $\mathring{\xi}\hook \dN\theta_S = \pi^*(a^{-1}\xi^*\omega)\wedge
  \theta_S = 0$ and so
  $\cL_{\mathring{\xi}}^{\nabla} \dN\theta_S = 0$. Thus, there is a
  two-form $\omega_S\in \Omega^2(S,F_S)$ with
  $\pi_S^*\omega_S = \dN\theta_S$.  Indeed
  $a^{-1}\omega \Hrel \omega_S$.

  Having found $a_S$ and~$\omega_S$, it remains to verify
  \ref{item:s-closure}--\ref{item:s-pullback} in
  Definition~\ref{def:shear}.
  \ref{item:s-closure}~is automatic from the flatness of~$\nabla$.
  We compute
  \begin{equation*}
    \begin{split}
      \pi_S^*\dN a_S
      &= \dN \mathring a^{-1}
        = - \mathring a^{-1} \dN \mathring a\, \mathring a^{-1}
        = \pi^*(a^{-1} \xi \hook \omega a^{-1})\\
      &= - \rho \hook \dN \theta_S
        = - \pi_S^* (\xi_S \hook \omega_S),
    \end{split}
  \end{equation*}
  giving~\ref{item:s-contract}.  The final condition follows from
  \begin{equation*}
    \pi_S^*(\xi_S^*\omega_S)
    = \rho^*(\pi_S^*\omega_S)
    = \rho^*\dN\theta_S
    = \rho^*\bigl(\pi^*(a^{-1}\xi\hook \omega)\wedge \theta_S
    + \pi^*(a^{-1} \omega)\bigr)
    = 0.
  \end{equation*}
  Thus we have some shear data $(\xi_S,\omega_S,a_S)$ on~$S$.

  To see that we obtain~$M$ as the resulting shear, it is
  sufficient to show that $\mathring \xi_S = \rho$. But
  $\mathring \xi = \widetilde \xi + \rho \circ \mathring a$ and
  $\pi_S^*a_S = \mathring a^{-1}$ imply
  \begin{equation*}
    \rho = - \widetilde \xi \circ \mathring a^{-1} + \mathring \xi
    \circ \mathring a^{-1}
    = - \widetilde\xi \circ \mathring a^{-1} + \rho_S \circ \pi_S^*a_S.
  \end{equation*}
  Now the final term is $\pi_S$-vertical and the first one
  lies in~$\cH = \ker \theta = \ker \theta_S$. As $\rho$
  projects to $\xi_S$, we conclude that
  $\widetilde{\xi_S} = - \widetilde \xi \circ \mathring a^{-1}$,
  thus $\mathring \xi_S = \rho$ and
  $- \xi \circ a^{-1} \Hrel \xi_S$.
\end{proof}

\section{Examples}
\label{sec:examples}
\subsection{Shears via non-principal bundles}
\label{subsec:shearsnonpb}
In \cite{FS}, we considered a first version of a rank one shear construction
with the bundles $E$ and $F$ trivialised.  We showed in \cite[\S3.4]{FS} that
then $P$ always has the structure of a principal bundle.  This is no
longer true in the general shear construction, even when $P$ has rank
one. This fails for two reasons: we no longer require $P$ to be a
fibre bundle and we only require $E$ and $F$ to be flat, not
trivial.

Let us give a simple example where $P$ is not a fibre bundle and
the topology of the fibres can change.  Take
$P = T^2 = S^1 \times S^1$, $M = S^1$ and $W = S^1$.
There is a twist of the form $M = S^1\leftarrow P = T^2\to W = S^1$
with the first map being the projection onto the first $S^1$-factor
and the second one given by $(x,y)\mapsto x^{-1}y$.  This corresponds
to twist data $(\xi,a,\omega)$ with Abelian Lie algebras
$\mfa_P = \mfa_M = \bR$, $\xi(1) = \partial_{\varphi}$,
$a = \id_{\bR}$, $\omega = 0$ and $\theta = d\psi$, where $\varphi$
describes the first $S^1$-factor and $\psi$ the second one in $T^2$.
Taking $E = F = M\times \bR$ with the natural flat connections, this
can also be understood as a shear construction.

Now we may remove one point $p\in T^2$.  Then, still
$M = S^1\leftarrow P' = T^2\setminus \{p\} \to S = S^1$ is a shear,
but neither map $P' \to M,S$ is a fibre bundle, indeed most
fibres are circles, but one fibre is homeomorphic to~$\bR$. More
generally, one may remove a closed segment~$L$ in one fibre of
$T^2 \to M = S^1$, then $P'' = T \setminus L \to S = S^1$ is
still surjective but has fibres which are topologically~$\bR$ over a
closed segment in~$S = S^1$.

Even if we assume that $P\to M$ is a fibre bundle, we cannot conclude
that $P$~has the structure of a principal bundle.  To see this, take a
non-trivial flat vector bundle $(E,\nabla)$ of rank~$k$ and
$P = F = E$.  Then $P = E \to M$ does not admit the structure of a
principal $\bR^k$-bundle: if it did, the structure group could be
reduced to the maximal compact subgroup $\{0\} \leqslant \bR^k$
contradicting non-triviality.

Take the Ehresmann connection
$\hat\theta\in \Omega^1(E,\cV)\subset \End(TE)$ associated
to~$\nabla$.
We get an induced splitting
$TE = \cH\oplus \cV\cong \pi^*TM\oplus \pi^*E$ and $\theta$
corresponds to the projection onto the second factor.
Using Proposition~\ref{pro:Ehresmannconnection}, we see that
$(P,\theta,\inc)$ is a shear total space for $\omega = 0$:
\begin{itemize}
\item For $X_i=\pi^*e_i$ vertical with $e_i\in \Gamma(E)$, $i=1,2$,
  one gets $[X_1,X_2]=0$ as
  $\varphi^{X_i}_t(\tilde{e})=\tilde{e}+te_i(\pi(\tilde{e}))$ is the flow of
  $X_i=\pi^*e_i$ and
  $\nabla_{X_i} X_j=\nabla_{X_i} \pi^*e_j=\pi^*(\nabla_{d\pi(X_i)}
  e_j)=0$ for all $\{i,j\}=\{1,2\}$. Thus, equation
  \eqref{eq:rhotorsionfree} holds.
\item Let $X=\pi^*e$ for a parallel (local) section $e\in \Gamma(E)$
  and let $Y$ be horizontal.
  Then $[X,Y]=\frac{d}{dt}|_{t=0} d\varphi^X_{-t}(Y(\varphi(t)))$
        for $\varphi^X_t(\tilde{e})=\tilde{e}+te(\pi(\tilde{e}))$.
  As our Ehresmann connection comes from a covariant derivative
  $\nabla$ on $E$, the differentials of the scalar multiplication and
  the addition on $E$ preserve the horizontal subbundle.
  Moreover, $de(d\pi(v))$ is horizontal for any $v\in TE$ as $e$ is
  parallel.
  So $d\varphi_{-t}(Y(\varphi(t)))$ is in the horizontal subbundle for
  all $t$, which implies that $X=\pi^*e$ preserves the horizontal
  subbundle.
\end{itemize}

Moreover, if $\xi\colon E\to TM$ is any bundle map and $a\colon E\to
E$ any bundle isomorphism, then $(\xi,a,0)$ defines shear data exactly
when $(\xi,\nabla)$ is torsion-free and $\nabla a = \dN a = 0$.  For
such a shear, we have $d\alpha\Hrel d_S\alpha_S$ if
$\alpha\Hrel\alpha_S$. As we may always take $\xi = 0$ and $a = \id_E$ to get a
shear $M\leftarrow E\to M$ for which both projections coincide, there are, in fact, examples of
rank one shears for which $P\rightarrow M$ does not have the structure of a principal
bundle.

Note that if $E = TM$ and $\xi = \id_{TM}$, then $(\xi,\nabla)$ being
torsion-free means exactly that $\nabla$ is torsion-free. We may take
then $a = J$ to be an almost complex structure on $M$. As $\nabla J =
0$ and $\nabla$ torsion-free implies that $J$ is integrable, a triple
$(J,\id_{TM},0)$ defines shear data if and only if $(M,J,\nabla)$ is a
special complex manifold in the sense of \cite{ACD} with $\nabla$
being complex, i.e.\ $\nabla J = 0$. If, conversely, we take $\xi=J$ being an almost
complex structure and $a=\id_{TM}$, then $(\id_{TM},J,0)$ defines
shear data precisely when $[JX,JY]=J(\nabla_{JX} Y-\nabla_{JY} X)$ for all
$X,Y\in \mathfrak{X}(M)$. If additionally $\nabla$ is torsion-free, this condition
implies that $J$ is integrable. Hence, for $\nabla$ being torsion-free, $(\id_{TM},J,0)$
defines shear data if and only if $(M,J,\nabla)$ is a
special complex manifold.

If $E=TM$, we may also take $\omega$ to be the torsion $T^{\nabla}$ of a
non-torsion-free flat connection $\nabla$.
Namely, first of all, a short computation shows
\begin{equation*} \dN T^{\nabla}(X,Y,Z) =
R^{\nabla}(X,Y)Z+R^{\nabla}(Y,Z)X+R^{\nabla}(Z,X)Y = 0.
\end{equation*} Furthermore, we have $\cV\cong \pi^*TM$ in $TP = TTM =
\pi^*TM\oplus \pi^*TM = \cH \oplus \cV$.  So the one-form $A\in
\Omega^1 (TM,\pi^*TM)$, $A \coloneqq \theta+\eta$ with $\theta$ as
above and $\eta\in \Omega^1(TM,\pi^*TM)$ being the projection onto the
horizontal subbundle, satisfies $\dN A = \dN \eta = \pi^*T^{\nabla}$.
Hence, $(\rho,A,T^{\nabla})$ is a shear total space for $T^{\nabla}$.
Then a triple $(\xi,a,T^{\nabla})$ with
$\xi,a\in \mathrm{End}(TM)$ and $a$ being invertible defines shear
data if and only if $T^{\nabla}|_{\xi(TM)\times \xi(TM)} = 0$,
$T^{\nabla}(\any,\xi(\any)) = \nabla a$ and $\nabla_{\xi} \xi$ is
symmetric.

\subsection{Shears on Lie algebras revisited}
\label{sec:shears-LAs-revisited}

In \S\ref{sec:shears-lie-algebras}, we introduced the shear
construction on Lie algebras by replacing in the left-invariant twist
construction central ideals and central extension by Abelian ones.
The results of that section then motivated our general definition of
shear data and the shear in Definition~\ref{def:shear}.  Here, we like
to see how we can recover the shear on Lie algebras from
\qq{left-invariant} shear data on a Lie group $G$.

\begin{definition}
  \label{def:leftinvsheardata} Let $G$ be a $1$-connected Lie group
        and $E$ and $F$ be trivial vector bundles of rank $k$ over $G$
        endowed with flat connections $\nabla^E$ and $\nabla^F$,
        respectively, which are both \emph{left-invariant} in the sense that
        the connection forms with respect to frames of constant sections are left-invariant.
        We write $E = G\times \mfa_G$ and $F = G\times \mfa_P$ for $k$-dimensional
  Abelian Lie algebras $\mfa_G$ and $\mfa_P$.

Suppose $(\xi,a,\omega)\in \Gamma(\Hom(E,TG))\times
\Gamma(\Hom(E,F))\times \Omega^2 (G,F)$ is shear data on~$G$. Let $G$
act by left translations and their differentials on $G$ and~$TG$,
respectively, on $E$ and $F$ by left-translation on the first and
trivially on the second factor and extend these actions naturally to tensor products of
these vector bundles. Then we say the shear data is
\emph{left-invariant} if $(\xi,a,\omega)$ are $G$-equivariant sections
of the corresponding bundles and $\cL_{\xi}^{\nabla^E} \alpha = 0$
for all left-invariant one-forms $\alpha\in \Omega^1 G$.
\end{definition}

Note that $(\xi,a,\omega)$ being $G$-equivariant is equivalent to
$\xi$ mapping constant sections of $E$ to left-invariant vector
fields, $a$~mapping constants sections of $E$ to constant sections
of~$F$ and $\omega$~mapping pairs of left-invariant vector fields to
constant sections of $F$. So we can consider $a$ as an element of
$\mfa_G^*\otimes \mfa_P$, $\xi$~as a Lie algebra homomorphism from
$\mfa_G$ to $\mfg$ and $\omega$~as an element of $\Lambda^2
\mfg^*\otimes \mfa_P$.  The flat connections are determined by
$\gamma\in \mfg^*\otimes \gl(\mfa_G)$ and $\eta\in \mfg^*\otimes
\gl(\mfa_P)$, so $\nabla_X^E e=X(e)+\gamma(X)(e)$ or $\nabla_X^F
e=X(f)+\eta(X)(f)$, for any $X\in \mathfrak{X}(G)$ and any $e
\in \Gamma(E)$, $f \in \Gamma(F)$.

Condition~\ref{item:s-contract} for shear data in
Definition~\ref{def:shear} is $\xi\hook \omega=-\dN a$.  As
\begin{equation*} (\dN a)(X,Y) = (\nabla_Y a)(X) = \nabla_Y^F (a X) -
a(\nabla_Y^E X) = \eta(Y)(a X) - a(\gamma(Y)(X))
\end{equation*} for all $X\in \mfa_P$, $Y\in \mfg$, this gives us
$\gamma=a^{-1}(\xi\hook \omega)+a^{-1} \eta a$, which is
equation~\eqref{eq:gamma}.  Moreover, $(\xi,\nabla)$ being
torsion-free yields
\begin{equation}
  \label{eq:li-torsion-free} [\xi X,\xi Y] = \xi(\nabla_{\xi X
}Y-\nabla_{\xi Y}X) = \xi(\gamma(\xi X)(Y)-\gamma(\xi Y)(X))
\end{equation} for all $X,Y\in \mfa_G$.  Furthermore, we have
\begin{equation*}
  \begin{split} 0&= (\cL_{\xi}^{\nabla} \alpha)(X ,\xi Y) = \dN
(\xi\hook \alpha)(X,\xi Y)+(\xi\hook d\alpha)(X,\xi Y)\\
&=(\nabla_{\xi Y} \alpha\circ \xi)(X) +d\alpha(\xi X,\xi Y ) =
-\alpha(\xi \nabla_{\xi Y} X+[\xi X,\xi Y])\\ &=
-\alpha(\xi(\gamma(\xi Y)(X))+[\xi X,\xi Y])
  \end{split}
\end{equation*} for all $\alpha\in \mfg^*$, and so $[\xi X,\xi Y] =
-\xi(\gamma(\xi Y)(X))$ for all $X,Y\in \mfa_G$. With
\eqref{eq:li-torsion-free} we deduce that $\xi(\gamma(\xi X)(Y))=0$
for all $X,Y\in \mfa_G$ and so $[\xi X,\xi Y]=0$. Hence
$\xi\colon\mfa_G\to \mfg$ is a Lie algebra homomorphism. Thus, we
got the same data as in \S\ref{sec:shears-lie-algebras}
fulfilling the requirements of Lemma~\ref{lem:lift-cond}, except that
$\xi$ is not necessarily injective.

Definition~\ref{def:shear}\ref{item:s-closure} yields $0 = \dN
\omega = d\omega+\eta\wedge\omega$. Using the flatness of~$\nabla^F$,
we get that $\eta\in \mfg^*\otimes \gl(\mfa_P) =
\Hom(\mfg,\gl(\mfa_P))$ is a representation.

We may define now a natural shear total space for $\omega$ as follows:
First of all, as in
\S\ref{sec:shears-lie-algebras}, $\omega$ and $\eta$ define an Abelian
extension $\mfa_P\hookrightarrow\mfp\twoheadrightarrow \mfg$ together
with a vector space splitting $\mfp = \mfg\oplus \mfa_P$. Let $P$ be
the $1$-connected Lie group with Lie algebra $\mfp$ and define
$\theta\in \Omega^1(P,\pi^*F) = \Omega^1(P,\mfa_P)$ and
$\rho\colon P\times \mfa_P = \pi^*F\to TP$ as in \S\ref{sec:shears-lie-algebras},
i.e. $\theta$ corresponds to the projection $\mfp\rightarrow\mfa_P$ onto $\mfa_P$
induced by the splitting and $\rho$ to the injection $\mfa_P\hookrightarrow\mfp$.
Then $(P,\theta,\rho)$ is a shear total space for $\omega$.

Moreover, $\mathring{\xi}\colon P\times \mfa_G = \pi^*E\to TP$, $\mathring{\xi}
= \tilde{\xi}+\rho\circ a$ can be considered as a linear map from
$\mfa_G$ to $\mfp$ and Lemma~\ref{lem:lift-cond} gives us that $\mathring{\xi}$ is a Lie
algebra homomorphism and that $\mathring{\xi}(\mfa_G)$ is an Abelian
ideal in~$\mfp$. The leaves of the distribution
$\mathring{\xi}(\mfa_G)$ are the left cosets of the normal Lie
subgroup $N$ of~$P$ with Lie algebra $\mathring{\xi}(\mfa_G)$. Thus,
the shear is the Lie group $H \coloneqq P/N$ with Lie algebra $\mfh$
agreeing with that of Definition~\ref{def:shear-LAs}. In this sense,
the left-invariant shear on $1$-connected Lie groups with injective $\xi$
via the natural shear total space $(P,\theta,\rho)$ from above and the shear
on Lie algebras presented in \S\ref{sec:shears-lie-algebras} are the same and
we will not distinguish them in the following sections.

\subsection{Shears on almost Abelian Lie algebras}
\label{subsec:shearsaALAs} As explained at the end of
\S\ref{sec:shears-lie-algebras}, we can successively shear $\bR^n$ to
any $n$-dimensional solvable Lie algebra such that each shear
increases the solvable step length by one. One important example of solvable
Lie algebras of step length one is provided by almost Abelian Lie
algebras: $\mfg$~with a codimension one Abelian ideal~$\mfu$.
Choosing $X\in \mfg\backslash \mfu$, these Lie algebras are
determined by a single endomorphism $f\coloneqq \ad(X)|_{\mfu}
\in \End(\mfu)$. Experience shows that a range of different geometric
structures may be constructed on these~$\mfg$ \cite{F1,F2}.

The following proposition gives conditions when left-invariant data
$(\xi,a,\omega)\in \Hom(\mfa_G,\mfg)\times \Hom(\mfa_G,\mfa_P)\times
\Lambda^2 \mfg^*\otimes \mfa_P$ of a particular form is shear data on
the associated simply-connected Lie group $G$ and when the shear of a
closed left-invariant form is again closed.  We write $\alpha\in
\mfg^*$ for the unique element in the annihilator of~$\mfu$ with
$\alpha(X) = 1$.

We will consider an $f$-invariant subspace~$\mfa$ of~$\mfu$, take
$\mfa_G=\mfa=\mfa_P$ and let $E \coloneqq G\times \mfa_G$, $F
\coloneqq G\times \mfa_P$ be trivial vector bundles endowed with flat
left-invariant connections determined by $\gamma\in \mfg^*\otimes
\gl(\mfa_G)$ and $\eta \in \mfg^*\otimes \gl(\mfa_P)$, respectively.

\begin{proposition}\label{pro:sheardata-AALAs} Let $G$ be a
simply-connected almost Abelian Lie group with Lie algebra data
$(\mfg,\mfu,X,\alpha,f)$.  Let $\mfa \subset \mfu$ be an $f$-invariant
subspace and take flat bundles as above.

  Fix a left-invariant two-form $\omega \in \Lambda^2 \mfg^*\otimes
\mfa_P$.  Consider the decomposition $\omega = \omega_0+\alpha\wedge
\nu$, with $\omega_0\in \Lambda^2\mfu^*\otimes \mfa$ and $\nu\in
\mfu^*\otimes \mfa\subset\End(\mfu)$.

  Assume that $\omega_0\neq 0$ and $\omega_0|_{\mfa\otimes \mfu}=0$.
  \begin{enumerate}[\upshape(a)]
  \item\label{item:aa-shear} Then $(\inc,\id_{\mfa},\omega)$, with
$\inc\colon \mfa \to \mfg$ the inclusion, is left-invariant shear data
on~$G$ if and only if
    \begin{equation}
      \label{eq:sheareq1} f.\omega_0 = -(f+\nu) \circ \omega_0, \quad
\gamma = \alpha\otimes f|_{\mfa}, \quad \eta = \alpha \otimes
(f+\nu)|_{\mfa}.
    \end{equation}
  \item\label{item:aa-closed} Suppose $(\inc,\id_{\mfa},\omega)$ is
left-invariant shear data on $G$.  Let $\psi\in \Lambda^r \mfg^*$ be a
closed left-invariant $r$-form on~$G$ and decompose $\psi$ uniquely as
$\psi = \chi \wedge \alpha + \tau$ with $\chi \in \Lambda^{r-1}\mfu^*$
and $\tau\in \Lambda^r \mfu^*$.  Then the $\cH$-related form
$\psi_{\mfh}$ on the shear~$\mfh$ is closed if and only if
    \begin{equation}
      \label{eq:sheareq2} \kappa(\tau\wedge \omega_0)=0, \quad
\kappa(\chi \wedge \omega_0) + (-1)^{r-1}\nu.\tau=0,
    \end{equation} where $\kappa\colon \Lambda^k \mfu^*\otimes \mfa
\to \Lambda^{k-1}\mfu^*$ is the unique linear map given on
decomposable elements $\rho\otimes A$ by $\kappa(\rho\otimes A) =
A\hook \rho$.
  \end{enumerate}
\end{proposition}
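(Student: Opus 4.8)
The plan is to specialise the defining conditions \ref{item:s-bracket}--\ref{item:s-pullback} of shear data (Definition~\ref{def:shear}), together with the one-form invariance of Definition~\ref{def:leftinvsheardata}, to the ansatz $\xi=\inc$, $a=\id_\mfa$, reading them off as Lie-algebraic identities exactly as in \S\ref{sec:shears-LAs-revisited}. Two conditions will be automatic: \ref{item:s-bracket} (torsion-freeness \eqref{eq:xinablatorsionfree}) holds because $\mfa$ is abelian and, once $\gamma$ is pinned down below, $\gamma|_\mfa=0$; and \ref{item:s-pullback} ($\xi^*\omega=0$) holds because $\alpha|_\mfa=0$ together with $\omega_0|_{\mfa\otimes\mfu}=0$ forces $\omega(e_1,e_2)=0$ for $e_1,e_2\in\mfa$. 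So the substance of~(a) sits in the one-form invariance and in conditions \ref{item:s-closure} and~\ref{item:s-contract}.

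For~(a) I would first evaluate $\cL_{\xi}^{\nabla}\beta=0$ on $\beta\in\mfg^*$; as in \S\ref{sec:shears-LAs-revisited} this reduces to $\gamma(Z)e=[Z,e]_\mfg$ for $Z\in\mfg$, $e\in\mfa$, and since $[Z,e]=\alpha(Z)f(e)$ with $f(e)\in\mfa$ by $f$-invariance, this is exactly $\gamma=\alpha\otimes f|_\mfa$. Computing $\dN a$ for $a=\id_\mfa$ gives $(\dN a)(Z)=\eta(Z)-\gamma(Z)$, while $\omega(e,Z)=-\alpha(Z)\nu(e)$ for $e\in\mfa$; hence condition~\ref{item:s-contract}, $\xi\hook\omega=-\dN a$ (equivalently~\eqref{eq:gamma}), becomes $\eta-\gamma=\alpha\otimes\nu|_\mfa$, i.e.\ $\eta=\alpha\otimes(f+\nu)|_\mfa$. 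For condition~\ref{item:s-closure}, $\dN\omega=d\omega+\eta\wedge\omega=0$, I would split the three arguments by how many equal~$X$: the all-$\mfu$ case is vacuous since the brackets vanish and $\eta|_\mfu=0$, while the one-$X$ case $\dN\omega(X,B,C)$ with $B,C\in\mfu$ collapses, using $[X,B]=f(B)$, $\eta(X)=(f+\nu)|_\mfa$ and $\omega|_{\mfu\times\mfu}=\omega_0$, to $(f.\omega_0)(B,C)+((f+\nu)\circ\omega_0)(B,C)$, which vanishes iff $f.\omega_0=-(f+\nu)\circ\omega_0$. These three identities are precisely \eqref{eq:sheareq1}; both implications follow by reversing the steps, and for the converse one checks that the stated $\gamma,\eta$ really are representations (so $E,F$ are flat) because $[\mfg,\mfg]\subset\mfu=\ker\alpha$.

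For~(b) the starting point is Corollary~\ref{cor:differentials}: with $\psi$ closed, $d\psi_{\mfh}$ is $\cH$-related to $-(\xi\circ a^{-1}\hook\psi)\wedge\omega=-(\inc\hook\psi)\wedge\omega$, and since the horizontal lift is injective an $\cH$-related partner vanishes iff the form itself does, so $\psi_{\mfh}$ is closed iff $(\inc\hook\psi)\wedge\omega=0$. First one observes that $\psi_{\mfh}$ exists, i.e.\ $\cL_{\xi}^{\nabla}\psi=0$ (Proposition~\ref{pro:uniqueHrelated}): the operator $\cL_{\xi}^{\nabla}=\dN(\xi\hook\any)+\xi\hook\dN$ is an even derivation of the wedge product (the anticommutator of the two antiderivations $\xi\hook$ and $\dN$) which vanishes on left-invariant one-forms by hypothesis, hence on all of $\Lambda^{\bullet}\mfg^*$. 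Writing $\omega^i=\omega_0^i+\alpha\wedge\nu^i$ in a basis $(c_i)$ of $\mfa$ and $\inc\hook\psi=\sum_i(c_i\hook\psi)\otimes c^i$, I would expand $(\inc\hook\psi)\wedge\omega=\sum_i(c_i\hook\psi)\wedge\omega^i$ using $\psi=\chi\wedge\alpha+\tau$, so that $c_i\hook\psi=(c_i\hook\chi)\wedge\alpha+c_i\hook\tau$, and split the result along $\Lambda^{r+1}\mfg^*=\Lambda^{r+1}\mfu^*\oplus(\Lambda^r\mfu^*\wedge\alpha)$.

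The decisive bookkeeping, which I expect to be the main obstacle, is to match the two components with~\eqref{eq:sheareq2}. The $\alpha$-free part is $\sum_i(c_i\hook\tau)\wedge\omega_0^i$, and, after commuting the single $\alpha$ to the right (a sign $-1$ appears only when it passes the one-form $\nu^i$, none when it passes the even-degree $\omega_0^i$), the $\alpha$-part is $\bigl(\sum_i(c_i\hook\chi)\wedge\omega_0^i-\sum_i(c_i\hook\tau)\wedge\nu^i\bigr)\wedge\alpha$. The crucial simplification is that $\omega_0|_{\mfa\otimes\mfu}=0$ is equivalent to $\kappa(\omega_0)=\sum_i c_i\hook\omega_0^i=0$; since $c_i\hook$ is an antiderivation, this lets me rewrite $\sum_i(c_i\hook\tau)\wedge\omega_0^i=\kappa(\tau\wedge\omega_0)$ and $\sum_i(c_i\hook\chi)\wedge\omega_0^i=\kappa(\chi\wedge\omega_0)$, the would-be correction terms being multiples of $\kappa(\omega_0)=0$. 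Finally the identity $\nu.\tau=-\sum_i\nu^i\wedge(c_i\hook\tau)$ for $\nu\in\mfu^*\otimes\mfa\subset\End(\mfu)$ gives $\sum_i(c_i\hook\tau)\wedge\nu^i=(-1)^r\nu.\tau$, so the $\alpha$-part becomes $\bigl(\kappa(\chi\wedge\omega_0)+(-1)^{r-1}\nu.\tau\bigr)\wedge\alpha$. Since a form in $\Lambda^{r+1}\mfg^*$ vanishes iff both its components vanish, $(\inc\hook\psi)\wedge\omega=0$ is equivalent to the two equations of~\eqref{eq:sheareq2}, which completes the proof.
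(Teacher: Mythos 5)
Your proposal is correct and follows essentially the same route as the paper: part (a) is the same specialisation of the shear-data conditions via \S\ref{sec:shears-LAs-revisited} (one-form invariance pins down $\gamma$, condition $\xi\hook\omega=-\dN a$ pins down $\eta$, and $\dN\omega=0$ gives the first identity of \eqref{eq:sheareq1}), and part (b) is the same application of Corollary~\ref{cor:differentials} followed by the expansion of $(\inc\hook\psi)\wedge\omega$ into its $\alpha$-free and $\alpha$-components, using $\omega_0|_{\mfa\otimes\mfu}=0$ to identify the contractions with $\kappa(\tau\wedge\omega_0)$ and $\kappa(\chi\wedge\omega_0)$ and the sign computation $\sum_i(c_i\hook\tau)\wedge\nu^i=(-1)^r\nu.\tau$. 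Your extra remarks on the automatic validity of torsion-freeness and $\xi^*\omega=0$, and on why $\cL^\nabla_\xi\psi=0$ for all left-invariant forms, only make explicit what the paper leaves implicit.
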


In the above statement $(f.\omega_0)(X,Y) = - \omega_0(fX,Y) -
\omega_0(X,fY)$ for $X,Y \in \mfu$.

\begin{remark}\label{re:specialchoice} Here is one motivation for the
case considered. As $a\colon\mfa_G\to \mfa_P$ is invertible, it is no
restriction to assume that $\mfa = \mfa_G = \mfa_P$ and that $a =
\id_{\mfa}$. Moreover, it is natural to assume that $\xi\colon\mfa_G \to
\mfg$ is injective and so we may take $\xi$ to be the
inclusion. The image of $\xi$ is an Abelian ideal, so it is reasonable to take
$\mfa \subset \mfu$ as $f$-invariant subspaces of $\mfu$ are such
ideals.

  The condition $\omega_0\neq 0$ is equivalent to the new algebra
$\mfh$ obtained by the shear no longer being almost Abelian.  Finally,
for $(\inc, \id|_{\mfa},\omega)$ to be shear data, we must have
$\xi^*\omega=0$, which is equivalent to $\omega_0|_{\Lambda^2
\mfa}=0$. This is ensured by the stronger condition
$\omega_0|_{\mfa\otimes \mfu}=0$, which is much simpler to work with.
\end{remark}

\begin{proof}[Proof of Proposition~\ref{pro:sheardata-AALAs}]
  \ref{item:aa-shear} By Remark~\ref{re:specialchoice}, condition
  \ref{item:s-pullback} in Definition~\ref{def:shear} is fulfilled
  since $\omega_0|_{\mfu\otimes \mfa} = 0$.
  Moreover, the discussion in~\S\ref{sec:shears-LAs-revisited} gives
  us that the validity of \ref{item:s-contract} in
  Definition~\ref{def:shear} for $(\inc,\id_{\mfa},\omega)$ is
  equivalent to
  $\gamma = \omega|_{\mfa\otimes \mfg}+\eta = -\alpha\otimes
  \nu|_{\mfa} + \eta$.
  As in \S\ref{sec:shears-lie-algebras}, we see that
  $\cL_{\xi}^\nabla \beta = 0$ for all $\beta\in \mfg^*$ is equivalent
  to $\gamma(X) = \xi\circ \gamma(X) = [X,\xi(\any)] = \ad(X)|_{\mfa}$
  for all $X\in \mfg$, and so to $\gamma = \alpha\otimes f|_{\mfa}$.
  Then $\eta = \alpha \otimes (f+\nu)|_{\mfa}$. The formulas for
  $\eta$ and $\gamma$ imply that the associated connections are flat
  and that $(\inc, \nabla)$ is torsion-free.

  Finally, we have to check when $\dN \omega = 0$ holds.  We have
  \begin{equation*} 0 = \dN \omega = d\omega + \eta\wedge \omega =
    \alpha \wedge f.\omega_0 + \alpha \wedge (f+\nu) \circ \omega_0.
  \end{equation*} So $\dN \omega = 0$ if and only if $f.\omega_0 =
  -(f+\nu)\circ \omega_0$.

  \ref{item:aa-closed} By the formula for the differential
  $d_{\mfh}\psi$ in Corollary~\ref{cor:differentials} and since
  $d\psi = 0$, we have to investigate when
  $(\xi\hook \psi) \wedge \omega$ is zero. First of all,
  \begin{equation*}
    \begin{split}
      (\xi\hook \psi) \wedge \omega
      &= (\xi\hook \chi)
        \wedge \alpha \wedge \omega_0 + (\xi\hook \tau) \wedge \alpha \wedge
        \nu + (\xi\hook \tau)\wedge \omega_0\\
      &= \bigl( (\xi\hook \chi)
        \wedge \omega_0 - (\xi\hook \tau) \wedge \nu \bigr) \wedge \alpha
        +(\xi\hook \tau)
        \wedge \omega_0\\
      &= \bigl( \kappa(\chi \wedge
        \omega_0) - (\xi\hook \tau)\wedge \nu \bigr) \wedge \alpha +
        \kappa(\tau\wedge \omega_0),
    \end{split}
  \end{equation*} since $\mfa$ is in the kernel of~$\omega_0$.  Now
  also
  \begin{equation*}
    \begin{split}
      -\bigl((\xi\hook \tau)\wedge
      \nu\bigr)(X_1,\dots,X_r)
      &=-\sum_{i=1}^r (-1)^{r-i}
        \tau(\nu(X_i),X_1,\dots,\widehat{X_i},\dots,
        X_r)\\
      &=-(-1)^{r-1}
        \sum_{i=1}^r \tau(X_1,\dots, \nu(X_i),\dots, X_r)\\
      &=(-1)^{r-1}(\nu.\tau)(X_1,\dots,X_r)
    \end{split}
  \end{equation*} for all $X_1,\dots, X_r\in \mfg$, and the result
  follows.
\end{proof}

\subsubsection{Cocalibrated $\G_2$-structures}

Suppose we have a $\G_2$-structure $\varphi\in \Lambda^3 \mfg^*$ on a
seven-dimensional almost Abelian Lie algebra $\mfg$ with Abelian ideal
$\mfu$ of codimension~$1$. Choosing some unit-length $\alpha\in \mfg^*$ in the
annihilator of~$\mfu$, it is well-known, cf.\ e.g.~\cite{MC}, that
$\varphi$ naturally induces a special almost Hermitian structure
$(\sigma,\rho)\in \Lambda^2 \mfu^*\times \Lambda^3 \mfu^*$ on~$\mfu$
with
\begin{equation*} \Hodge_{\varphi}\varphi = \tfrac12\sigma^2 +
\rho\wedge \alpha.
\end{equation*} We use the convention that the two-form~$\sigma$, the
induced almost complex structure $J$ and the induced Riemannian
metric~$g$ are related by $\sigma = g(\any,J\any)$.  Put $X\in
\mfg\backslash \mfu$ to be the unique element orthogonal to $\mfu$
with $\alpha(X) = 1$.

Suppose $\varphi$ is \emph{cocalibrated} $d\Hodge_{\varphi}\varphi =
0$.  Then $f \coloneqq \ad(X)|_{\mfu}$ lies in $\sP(\mfu,\sigma)$
by~\cite{F1}, so $\tr(f) = 0$.  Consider $\omega = \omega_0 + \alpha
\wedge \nu\in \Lambda^2\mfg^* \otimes \mfa$ fulfilling the
requirements of Proposition~\ref{pro:sheardata-AALAs} with respect to
some $\mfa \subset \mfu$.  We aim at a partial classification of all
such $\omega$ for which $(\inc,\id_{\mfa},\omega)$ is left-invariant
shear data with the shear $\varphi_\mfh$ of $\varphi$ cocalibrated.

First note that the $\gl(\mfa)$-valued left-invariant one forms $\eta$
and $\gamma$ which define the flat connections are fixed by
equation~\eqref{eq:sheareq1}.  Furthermore, as $\mfu$ is
six-dimensional, $\kappa\colon\Lambda^6\mfu^* \otimes \mfa \to
\Lambda^5 \mfu^*$ is injective.  It follows that the first equation in
\eqref{eq:sheareq2} is given by $\sigma^2\wedge \omega_0 = 0$, which
in turn is equivalent to
\begin{equation*} \omega_0\in \sigma^\bot\otimes \mfa =
[\Lambda_0^{1,1}\mfu^*]\otimes \mfa + \llbracket\Lambda^{2,0}
\mfu^*\rrbracket\otimes \mfa.
\end{equation*} So by Proposition~\ref{pro:sheardata-AALAs} we are
left with solving the second equation in \eqref{eq:sheareq2} and the
first one in \eqref{eq:sheareq1}.  This is complicated, so we restrict
to certain special cases and obtain the following result.

\begin{proposition}\label{pro:shearcocG2} Let $(\mfg,\mfu,\varphi)$ be
a seven-dimensional almost Abelian Lie algebra with a cocalibrated
$\G_2$-structure $\varphi\in \Lambda^3 \mfg^*$.  Let $\mfa$ be an
$f$-invariant subspace of $\mfu$ and $\omega = \omega_0+\alpha\wedge
\nu\in \Lambda^2 \mfg^*\otimes \mfa$ be as in
Proposition~\ref{pro:sheardata-AALAs}.
  \begin{enumerate}[\upshape(a)]
  \item\label{item:omega} If $\omega_0\in
[\Lambda_0^{1,1}\mfu^*]\otimes \mfa$, then
$(\mathrm{inc},\id_{\mfa},\omega)$ shears $(\mfg,\varphi)$ to a
cocalibrated structure if and only if $\dim(\mfa)\leq 2$, $\nu\in
\lie{sp}(\mfu,\sigma)$ and either
    \begin{enumerate}[\upshape(i)]
    \item\label{item:omega-1} $\dim(\im(\omega_0)) = 1$, $f.\omega_0 =
0$ and $\nu|_{\im(\omega_0)} = -f|_{\im(\omega_0)}$, or
    \item\label{item:dim2} $\dim(\im(\omega_0)) = 2$, $\omega_0 =
\sum_{i = 1}^2 \tilde{\omega}_i\otimes Y_i$, with $Y_1,Y_2\in \mfa$ a
basis, $\tilde{\omega}_i\wedge \tilde{\omega}_j =
\delta_{ij}\tilde{\omega}_1^2$ for $i,j\in \{1,2\}$,
$f.\tilde{\omega}_1 = a \, \tilde{\omega}_2$, $f.\tilde{\omega}_2 = -a
\, \tilde{\omega}_1$, $\nu(Y_1) = a Y_2-f(Y_1)$ and $\nu(Y_2) = -a
Y_1-f(Y_2)$ for some $a\in \bR$.
    \end{enumerate}
  \item\label{item:dim4firstcase} If $\dim(\mfa) = 4$ and
$J(\im(\omega_0))\perp \mfa$, then $(\mathrm{inc},\id_{\mfa},\omega)$
shears~$(\mfg,\varphi)$ to a cocalibrated structure if and only if
$f|_{\im(\omega_0)} = -\tr(f|_{\mfa})\id_{\im(\omega_0)}$ and
    \begin{equation*} \nu\in \tilde{\nu}+\bigl\{\,\hat{\nu}\in
\lie{sp}(\mfu,\sigma) \bigm| \hat{\nu}(\mfu)\subset \mfa,\
\hat{\nu}|_{\im(\omega_0)} = 0\,\bigr\},
    \end{equation*} where $\tilde{\nu}\in \End(\mfu)$ is
$\tilde{\nu}(W) = -\rho(JW,\kappa(J\circ
\omega_0)^{\sharp},\any)^{\sharp}$ for $W\in J\im \omega_0$ and
$\tilde{\nu}|_{(J\im \omega_0)^{\perp}} = 0$.
  \item\label{item:dim4secondcase} If $\dim(\mfa) = 4$ and
$J(\im(\omega_0))\subset\mfa$, then $(\mathrm{inc},\id_{\mfa},\omega)$
shears $(\mfg,\varphi)$ to a cocalibrated structure if and only if
$\mfa$ is a $\sigma$-degenerate subspace of $\mfu$ and
    \begin{equation*} \nu\in \tilde{\nu}+\bigl\{\,\hat{\nu}\in
\lie{sp}(\mfu,\sigma) \bigm| \hat{\nu}(\mfu)\subset \mfa,\
\hat{\nu}|_{\im(\omega_0)} = 0\,\bigr\},
    \end{equation*} where $\tilde{\nu}$ is given for $Y \in
\im(\omega_0)$ with $\norm{ Y } = 1$ by
    \begin{gather*} \tilde{\nu}(Y) = -\tr(f|_{\mfa})Y-f(Y),\\
\tilde{\nu}(JY) = \left(-\sigma(f(Y),JY)+\tr(f|_{\mfa})-\mu\right)JY,
\\ \tilde{\nu}|_{\mfa^{\perp}} = -\sigma(f(Y),\any)|_{\mfa^{\perp}}
JY,
    \end{gather*} and $\tilde{\nu}|_U = \mu\id_{U}$ on $U =
(\im\omega_0 + J\im \omega_0)^{\perp}\cap \mfa$, with $\mu\in \bR$
fixed by $\kappa(\omega_0\wedge \rho)|_{\Lambda^4\spa{Y,JY}^{\perp}} =
-\mu \sigma^2|_{\Lambda^4 \spa{Y,JY}^{\perp}}$.
  \end{enumerate}
\end{proposition}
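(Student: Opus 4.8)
The plan is to reduce cocalibration of the sheared structure $\varphi_\mfh$ to the closedness criterion of Proposition~\ref{pro:sheardata-AALAs}\ref{item:aa-closed}, applied to the four-form $\Hodge_\varphi\varphi=\tfrac12\sigma^2+\rho\wedge\alpha$, and then to solve the resulting algebraic equations case by case. First I would note that as vector spaces $\mfh\cong\cH=\mfg$ via the splitting $\mfp=\mfg\oplus\mathring{\xi}(\mfa_G)$, and that the $\G_2$-form, its induced metric and its orientation are recovered from $\varphi$ by the same universal pointwise formulas on $\mfg$ and $\mfh$. Since $\varphi\Hrel\varphi_\mfh$ (which exists because $\cL_{\xi}^{\nabla}$ is a derivation annihilating $\mfg^*$, so $\cL_{\xi}^{\nabla}\varphi=0$), the metrics and orientations are $\cH$-related as well, whence the operation $\varphi\mapsto\Hodge_\varphi\varphi$ commutes with the shear and $\Hodge_\varphi\varphi\Hrel\Hodge_{\varphi_\mfh}\varphi_\mfh$. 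By Corollary~\ref{cor:differentials} one then has $d(\Hodge_\varphi\varphi)-(\inc\hook\Hodge_\varphi\varphi)\wedge\omega\Hrel d_{\mfh}\Hodge_{\varphi_\mfh}\varphi_\mfh$; the first term vanishes as $\varphi$ is cocalibrated, and in the left-invariant setting a form is $\cH$-related to $0$ exactly when it is zero. Hence $\varphi_\mfh$ is cocalibrated if and only if $(\inc\hook\Hodge_\varphi\varphi)\wedge\omega=0$, which is precisely \eqref{eq:sheareq2} with $\tau=\tfrac12\sigma^2$, $\chi=\rho$, $r=4$: namely $\sigma^2\wedge\omega_0=0$ and $\kappa(\rho\wedge\omega_0)=\tfrac12\,\nu.\sigma^2$, to be combined with the shear-data equation $f.\omega_0=-(f+\nu)\circ\omega_0$ from \eqref{eq:sheareq1}.

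Next I would decode the two equations. As $\kappa$ is injective on $\Lambda^6\mfu^*\otimes\mfa$, the first is $\sigma^2\wedge\omega_0=0$, i.e.\ $\omega_0$ is $\sigma$-primitive, so $\omega_0\in([\Lambda^{1,1}_0\mfu^*]+\llbracket\Lambda^{2,0}\mfu^*\rrbracket)\otimes\mfa$. For the second I would record the key linear-algebra fact that, writing $\nu.\sigma^2=2\,\sigma\wedge(\nu.\sigma)$ and using the hard-Lefschetz isomorphism $\sigma\wedge\colon\Lambda^2\mfu^*\to\Lambda^4\mfu^*$, the map $\nu\mapsto\nu.\sigma^2$ has kernel $\sP(\mfu,\sigma)$ and restricts to an isomorphism from the $\sigma$-self-adjoint endomorphisms onto $\Lambda^4\mfu^*$. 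Thus the second equation pins down a unique $\sigma$-self-adjoint solution $\tilde\nu$ for $\nu$, with general solution $\nu=\tilde\nu+\hat\nu$, $\hat\nu\in\sP(\mfu,\sigma)$, all subject to the standing requirement $\im\nu\subset\mfa$.

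For part~\ref{item:omega}, $\omega_0$ is of type $(1,1)$ while $\rho$ has type $(3,0)+(0,3)$, so $\rho\wedge\omega_0=0$ and the second equation collapses to $\nu.\sigma^2=0$, i.e.\ $\nu\in\sP(\mfu,\sigma)$, as claimed. It then remains to solve $f.\omega_0=-(f+\nu)\circ\omega_0$. Writing $\omega_0=\sum_i\tilde\omega_i\otimes Y_i$, the hypothesis $\omega_0|_{\mfa\otimes\mfu}=0$ places $\mfa$ in the radical of each primitive $(1,1)$-form $\tilde\omega_i$; since such radicals are $J$-invariant of dimension at most four, together with the symplectic constraint on $\nu$ this yields $\dim\mfa\le2$. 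Separating the value- and form-parts of the equation then splits the analysis into $\dim(\im\omega_0)=1$, giving $f.\omega_0=0$ and $\nu|_{\im\omega_0}=-f|_{\im\omega_0}$, and $\dim(\im\omega_0)=2$, giving the rotation relations $f.\tilde\omega_1=a\,\tilde\omega_2$, $f.\tilde\omega_2=-a\,\tilde\omega_1$ and the stated values of $\nu(Y_1),\nu(Y_2)$.

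For parts~\ref{item:dim4firstcase} and~\ref{item:dim4secondcase}, with $\dim\mfa=4$, the form $\omega_0$ carries a nonzero $(2,0)+(0,2)$-component, so $\rho\wedge\omega_0$ (of type $(3,2)+(2,3)$) is genuinely present and the second equation determines $\nu$. Here I would compute $\kappa(\rho\wedge\omega_0)$ in closed form from $(\sigma,\rho,J,g)$ and invert $\nu\mapsto\tfrac12\nu.\sigma^2$ to obtain the $\sigma$-self-adjoint particular solution $\tilde\nu$; the positional hypotheses $J(\im\omega_0)\perp\mfa$ resp.\ $J(\im\omega_0)\subset\mfa$ control both the explicit shape of $\tilde\nu$ and whether $\mfa$ is $\sigma$-nondegenerate or $\sigma$-degenerate, yielding the two displayed formulas and the scalar $\mu$. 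Finally the shear-data equation fixes $f|_{\im\omega_0}$ (as $-\tr(f|_\mfa)\id$ in the first case, and via the degeneracy condition in the second) and forces $\hat\nu|_{\im\omega_0}=0$, so that $\nu\in\tilde\nu+\{\hat\nu\in\sP(\mfu,\sigma):\hat\nu(\mfu)\subset\mfa,\ \hat\nu|_{\im\omega_0}=0\}$. The main obstacle will be exactly these two $\dim\mfa=4$ cases: evaluating $\kappa(\rho\wedge\omega_0)$ in the $SU(3)$-module language and inverting $\nu\mapsto\nu.\sigma^2$ to extract a $\tilde\nu$ with image in $\mfa$, while tracking how the position of $\mfa$ relative to $J$ feeds into the shear-data equation; by comparison the $(1,1)$ case is routine once the radical argument delivers $\dim\mfa\le2$.
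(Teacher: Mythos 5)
Your global strategy coincides with the paper's: reduce cocalibration of the sheared structure to equations \eqref{eq:sheareq1} and \eqref{eq:sheareq2} applied to $\Hodge_\varphi\varphi=\tfrac12\sigma^2+\rho\wedge\alpha$ (so $\tau=\tfrac12\sigma^2$, $\chi=\rho$), use injectivity of $\kappa$ on $\Lambda^6\mfu^*\otimes\mfa$ to turn the first equation into primitivity $\sigma^2\wedge\omega_0=0$, and then solve $\kappa(\omega_0\wedge\rho)=\nu.\sigma\wedge\sigma$ together with $f.\omega_0=-(f+\nu)\circ\omega_0$ case by case. That reduction is correct. However, the one step where you supply an argument of your own is flawed: you claim the radical of a non-zero primitive $(1,1)$-form is ``$J$-invariant of dimension at most four'' and then appeal to ``the symplectic constraint on $\nu$'' to conclude $\dim\mfa\le 2$. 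A $4$-dimensional radical gives only $\dim\mfa\le 4$, and $\nu\in\sP(\mfu,\sigma)$ contributes nothing here. The correct mechanism is that for $0\neq\tilde\omega\in[\Lambda^{1,1}_0\mfu^*]$ one has $\Hodge_\mfu\tilde\omega=-\tilde\omega\wedge\sigma$, hence $\sigma\wedge\tilde\omega^2=-\norm{\tilde\omega}^2\tfrac16\sigma^3\neq0$, so $\tilde\omega$ has rank at least four and kernel of dimension at most \emph{two}; since $\omega_0|_{\mfa\otimes\mfu}=0$ places $\mfa$ in that kernel, $\dim\mfa\le2$ follows directly.

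Beyond that, the proposal defers exactly the parts that carry the content of the proposition. In \ref{item:omega} you assert the dichotomy but do not derive it: for $\dim(\im\omega_0)=1$ one only gets $f.\omega_0=-\lambda\omega_0$ a priori, and killing $\lambda$ requires the argument $0=\tr(f)\,\sigma\wedge\omega_0^2=-2\lambda\,\sigma\wedge\omega_0^2$ with $\sigma\wedge\omega_0^2\neq0$; the $\dim(\im\omega_0)=2$ case needs the construction of the basis with $\tilde\omega_i\wedge\tilde\omega_j=\delta_{ij}\tilde\omega_1^2$ before the rotation relations can be read off. In \ref{item:dim4firstcase} your claim that ``the shear-data equation fixes $f|_{\im\omega_0}$ as $-\tr(f|_\mfa)\id$'' is not right as stated: \eqref{eq:sheareq1} only gives $\nu|_{\im\omega_0}=-\tr(f|_\mfa)\id-f|_{\im\omega_0}$, and the condition on $f$ arises only after one proves, by a separate restriction-to-$\Lambda^3\spa{Y,JY}^\perp$ argument, that \emph{every} solution of $\kappa(\omega_0\wedge\rho)=\nu.\sigma\wedge\sigma$ with $\im\nu\subset\mfa$ has $\nu|_{\im\omega_0}=0$. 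Finally, inverting $\nu\mapsto\nu.\sigma^2$ on the complement of $\sP(\mfu,\sigma)$ produces a solution with no control on its image, whereas the proposition requires a particular solution $\tilde\nu$ with $\tilde\nu(\mfu)\subset\mfa$; the existence of such a solution is precisely what the explicit formulas for $\tilde\nu$ in \ref{item:dim4firstcase} and \ref{item:dim4secondcase} establish by direct verification, and without it the affine description of the solution set is unjustified. As it stands the proposal is a correct outline of the paper's route with the decisive computations missing and one incorrect justification.
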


\begin{proof}
  \ref{item:omega} Here, $\rho\wedge \omega_0 = 0$ and so the second
  equation in \eqref{eq:sheareq2} simplifies to
  $\nu.\sigma \wedge \sigma = 0$, i.e.\ to $\nu.\sigma = 0$, since the
  Lefschetz operator is bijective on two-forms in six dimensions.
  Thus we regard $\nu$ as an endomorphism of~$\mfu$ and see that
  $\nu\in \sP(\mfu,\sigma)$.

  Let $\Hodge_\mfu$ be the Hodge star operator on~$\mfu$. Then using
  Schur's Lemma and a concrete element, we find
  $\Hodge_\mfu \tilde{\omega} = -\tilde{\omega}\wedge \sigma$ for each
  $\tilde{\omega}\in [\Lambda^{1,1}_0]$.
  Hence, for such $\tilde\omega$,
  $\sigma \wedge \tilde{\omega}^2 = -\tilde{\omega}\wedge
  \Hodge_\mfu\tilde{\omega} =
  -g(\tilde{\omega},\tilde{\omega})\tfrac16\sigma^3$, showing that
  $\sigma \wedge \tilde{\omega}^2$ is non-zero if $\tilde{\omega}$ is
  non-zero.
  In particular, any non-zero element in $[\Lambda_0^{1,1}]$ has rank
  at least four.
  The condition $\omega_0|_{\mfa\otimes\mfu} = 0$, then gives
  $\dim(\mfa)\leq 2$, and so $\dim(\im(\omega_0))\leq 2$.

  If $\dim(\im(\omega_0)) = 1$, then
  $(f+\nu)\circ\omega_0 = \lambda\omega_0$ for some $\lambda\in\bR$,
  and the first equation in~\eqref{eq:sheareq1} gives
  $f.\omega_0 = -\lambda \omega_0$.
  Now
  $0\neq \sigma\wedge \omega_0^2\in \Lambda^6 \mfu^*\otimes
  \mfa^{\otimes 2}$ and so, recalling that $\tr(f) = 0$, we have
  \begin{equation*} 0 = \tr(f)\sigma\wedge \omega_0^2 =
    f.(\sigma\wedge \omega_0^2) = 2\sigma\wedge f.\omega_0 \wedge
    \omega_0 = -2\lambda \sigma\wedge \omega_0^2,
  \end{equation*} giving $\lambda = 0$.  Thus, $f.\omega_0 = 0$ and
  $\nu|_{\im(\omega_0)} = -f|_{\im(\omega_0)}$.

  Let us now consider the case $\dim(\im(\omega_0)) = 2$.
  Then $\dim(\mfa) = 2$ and $\mfa$~is a $J$-invariant subspace as it
  is the kernel of a $(1,1)$-form.
  It follows that $\tr(f|_{\mfa}) = 0$.
  As the space of four-forms on $\mfu$ with annihilator~$\mfa$ is
  one-dimensional and the square of any non-zero element in
  $[\Lambda_0^{1,1}\mfu^*]$ which annihilates $\mfa$ is non-zero, we
  may choose a basis $Y_1,Y_2$ of~$\mfa$, so that
  $\omega_0 = \tilde{\omega}_1\otimes Y_1 + \tilde{\omega}_2\otimes
  Y_2$ with
  $\tilde{\omega}_1,\tilde{\omega}_2\in [\Lambda_0^{1,1}\mfu^*]$
  annihilating $\mfa$ and satisfying
  $\tilde{\omega}_i \wedge \tilde{\omega}_j = \delta_{ij}
  \tilde{\omega}_1^2 \neq 0$.
  Since $\tr(f|_{\mfa}) = 0$ and $\tr(f) = 0$, we have
  $f.(\tilde{\omega}_i\wedge \tilde{\omega}_j) = 0$ for all
  $i,j = 1,2$.
  Moreover, the first equation in \eqref{eq:sheareq1} yields
  $f.\tilde{\omega}_i \in \spa{\tilde{\omega}_1,\tilde{\omega}_2}$ for
  $i = 1,2$.
  Hence, we obtain $f.\tilde{\omega}_1 = a\, \tilde{\omega}_2$ and
  $f.\tilde{\omega}_2 = -a\, \tilde{\omega}_1$ for some $a\in \bR$.
  But then the first equation in \eqref{eq:sheareq1} is equivalent to
  $\nu(Y_1) = a Y_2-f(Y_1)$ and $\nu(Y_2) = -a Y_1-f(Y_2)$.

  \ref{item:dim4firstcase}~\&~\ref{item:dim4secondcase} For these
  cases, note that $\omega_0$ has kernel equal to~$\mfa$, that
  $\omega_0^2 = 0$ and that $\dim(\im\omega_0) = 1$.
  So the equation $\sigma^2\wedge \omega_0 = 0$ is equivalent to
  $\mfa$~being a $\sigma$-degenerate subspace, as claimed.

  The $f$-invariance of $\mfa$ and $\tr(f) = 0$ give
  $f.\omega_0 = \tr(f|_{\mfa}) \omega_0$.
  So the first equation in \eqref{eq:sheareq1} is equivalent to
  $\nu|_{\im(\omega_0)} =
  -\tr(f|_{\mfa})\id_{\im(\omega_0)}-f|_{\im(\omega_0)}$ and we are
  left with solving the equation
  \begin{equation}\label{eq:remainingequation}
    \kappa\left(\omega_0\wedge \rho\right) = \nu.\sigma\wedge \sigma.
  \end{equation} Note that the space of $\nu\colon\mfu\to \mfa\subset
  \mfu$ solving equation~\eqref{eq:remainingequation} and with
  $\nu|_{\im(\omega_0)}$ given as above is an affine subspace of
  $\End(\mfu)$ modelled on $\{\,\hat{\nu}\in
  \lie{sp}(\mfu,\sigma)\mid\hat{\nu}(\mfu)\subset \mfa,\
  \hat{\nu}|_{\im\omega_0} = 0\,\}$.

  \ref{item:dim4firstcase} Before checking that $\tilde{\nu}$ as in
  the statement is a solution of
  equation~\eqref{eq:remainingequation}, we show that any solution
  $\nu$ of equation~\eqref{eq:remainingequation} has to fulfil
  $\nu|_{\im\omega_0} = 0$ and so we must have
  $f|_{\im\omega_0} = -\tr(f|_{\mfa})\id_{\im\omega_0}$.
  As $\dim(\im(\omega_0)) = 1$, $\im(\omega_0)$ lies in the kernel of
  $\kappa(\omega_0\wedge \rho)$.
  Let $Y$ be a non-zero element of $\im(\omega_0)$.
  Then $\omega_0 = \tilde{\omega}\otimes Y$, $\tilde{\omega}$ has
  kernel~$\mfa$ and
  $\kappa(\omega_0\wedge\rho) = \tilde{\omega}\wedge (Y\hook \rho)$.
  Hence,
  $0 = Y \hook \kappa(\omega_0\wedge\rho) = (Y\hook \nu.\sigma) \wedge
  \sigma + \nu.\sigma\wedge (Y\hook \sigma)$.  We define
  \begin{equation*}
    Z = \spa{Y,JY}^\perp \subset \mfu.
  \end{equation*}
  Restricting the previous identity to $\Lambda^3Z$, we get
  $Y\hook \nu.\sigma|_Z = 0$. As
  $\im(\nu)\subset \mfa \subset (JY)^{\perp}$, this implies
  $\nu(Y)\in \spa{Y,JY}\cap \mfa=\spa{Y}$. So $\nu(Y) = \lambda Y$ for
  some $\lambda\in \bR$.
  Then $0= (\nu.\sigma\wedge \sigma)(Y,JY,\any,\any)|_{\Lambda^2Z}$
  gives us $\nu.\sigma = \lambda\sigma$ on ${\Lambda^2 Z}$. Now the
  kernel of the four-form $\nu.\sigma\wedge \sigma$ has to be at least
  two-dimensional. Moreover,
  $(\nu.\sigma\wedge \sigma)|_{\Lambda^2 \mfa^{\perp}\wedge \Lambda^2
  \mfa}=\tilde{\omega}|_{\Lambda^2 \mfa^{\perp}}\wedge (Y\hook
  \rho)|_{\Lambda^2 \mfa}\neq 0$ and
  $(\nu.\sigma\wedge \sigma)|_{ \mfa^{\perp}\wedge \Lambda^3
  \mfa}=0$. So the kernel of $\nu.\sigma\wedge \sigma$ is contained
  in~$\mfa$ and, hence, has non-zero intersection with~$Z$. Thus,
  $0=(\nu.\sigma\wedge)|_{\Lambda^4 Z}=\lambda \sigma^2|_{\Lambda^4
  Z}$ giving $\lambda = 0$ as claimed.

  Now consider $\tilde{\nu}$. Let $W\in J\im(\omega_0)$ be non-zero.
  We first note that $\tilde{\nu}(W)\in \mfa$, as $\mfa$ is in the
  kernel of $\omega_0$, so
  $\kappa(J\circ \omega_0)^{\sharp}\in (\mfa\oplus
  J\im\omega_0)^{\perp}$, which gives
  $\tilde{\nu}(W) = -\rho(JW,\kappa(J\circ
  \omega_0)^{\sharp},\any)^{\sharp}\in \spa{W,\kappa(J\circ
  \omega_0)^{\sharp}}^{\perp} = \mfa$. Moreover, both
  $\kappa(\omega_0\wedge \rho)$ and $\tilde{\nu}.\sigma\wedge \sigma$
  are zero when we restrict to $\Lambda^4 (J\im(\omega_0))^{\perp}$.

  Besides,
  $(\sigma\wedge Y\hook\rho)|_{\Lambda^4 \spa{JY}^{\perp}} = 0$.
  Since
  $J(JY\hook \tilde{\omega})^{\sharp} = J\kappa(J\circ
  \omega_0)^{\sharp}\in \linebreak\spa{JY}^{\perp}$, we get on
  $\Lambda^3 (J\im\omega_0)^{\perp} = \Lambda^3 \spa{JY}^{\perp}$ that
  \begin{equation*}
    \begin{split} JY\hook (\tilde{\nu}.\sigma\wedge \sigma) &=
                                                              -\sigma(\tilde{\nu}(JY),\any)\wedge \sigma = -g(\rho(Y,\kappa(J\circ
                                                              \omega_0)^{\sharp},\any)^{\sharp},J\any)\wedge \sigma\\ &=
                                                                                                                        -\sigma\wedge \rho(Y,J(JY\hook \tilde{\omega})^{\sharp},\any) =
                                                                                                                        \sigma(J(JY\hook \tilde{\omega})^{\sharp},\any)\wedge (Y\hook \rho)\\
                                                            &= g((JY\hook \tilde{\omega})^{\sharp},\any)\wedge (Y\hook \rho) =
                                                              (JY\hook \tilde{\omega})\wedge (Y\hook \rho)= JY\hook
                                                              \kappa(\omega_0\wedge \rho),
    \end{split}
  \end{equation*} so \eqref{eq:remainingequation} is satisfied.

  \ref{item:dim4secondcase} Using the results from above, we only have
  to show that $\tilde{\nu}$ defined as in
  Proposition~\ref{pro:shearcocG2}\ref{item:dim4secondcase} solves
  equation~\eqref{eq:remainingequation}. Now, as $JY\in \mfa$, the
  left-hand side is zero if we insert $Y$ or~$JY$. Since
  $\mfa^{\perp} = JU$, straightforward computations show
  \begin{gather*} \tilde{\nu}.\sigma(Y,JY) = \mu \sigma(Y,JY),\qquad
    \tilde{\nu}.\sigma|_{\Lambda^2 U} = 0 =
    \tilde{\nu}.\sigma|_{\Lambda^2 JU},\\
    \tilde{\nu}.\sigma|_{\spa{Y,JY}\wedge (U\oplus JU)} = 0,\qquad
    \tilde{\nu}.\sigma|_{U\wedge JU} = -\mu \sigma|_{U\wedge JU}.
  \end{gather*} So $\tilde{\nu}.\sigma\wedge \sigma$ is also zero if
  we insert $Y$ or~$JY$.
  Finally, on $\Lambda^4(U\oplus JU)$ we have
  $\tilde{\nu}.\sigma\wedge \sigma = -\mu \sigma^2 =
  \kappa(\omega_0\wedge \rho)$ as required, since
  $U\oplus JU = \spa{Y,JY}^{\perp}$.
\end{proof}

Let us give examples of all cases in Proposition~\ref{pro:shearcocG2}.

\begin{example}\label{ex:cocalibratedshears} Look at the almost
Abelian Lie algebra defined by
  \begin{equation*} (a_1.17,a_2.27,a_3.37,-a_1.47,-a_2.57,-a_3.67,0)
  \end{equation*} for $a_1,a_2,a_3\in \bR$, where $a_1.17$ in
place~$1$ means that $de^1 = a_1 e^{17}$ with respect to the basis
$e^1,\dots,e^7$ of $\mfg^*$, etc.  Consider the cocalibrated
$\G_2$-structure $\varphi\in \Lambda^3 \mfg^*$ with closed Hodge dual
$\Hodge_{\varphi}\varphi = 1425+1436+2536+1237-1567+2467-3457$, where
$1425 \coloneqq e^{1425} \coloneqq e^1\wedge e^4\wedge e^2\wedge e^5$,
etc.

  Case~\ref{item:omega}\ref{item:omega-1}: Taking $\mfa =
\spa{e_1,e_4}$, $\omega_0 = (e^{36}-e^{25})\otimes e_1$ and $\nu(e_1)
= -a_1 e_1$, $\nu(e_4) = a_1 e_4$ and $\nu(e_i) = 0$ for $i\in
\{2,3,5,6\}$, the shear gives a cocalibrated $\G_2$-structure on
  \begin{equation*} (25-36, a_2.27, a_3.37,0, -a_2.57, -a_3.67, 0).
  \end{equation*}

  Case~\ref{item:dim4firstcase}: Assume that $a_3=-2a_1$.
Then, taking $\mfa = \spa{e_1,e_2,e_3,e_5}$,
$\omega_0 = -e^{46}\otimes e_1$ and $\nu(e_4) = -e_5$, $\nu(e_i) = 0$
for all $i\in \{1,2,3,5,6\}$, the shear gives a cocalibrated
$\G_2$-structure on
  \begin{equation*} \bigl(a_1.17+46, a_2.27, -2 a_1.37,
-a_1.47, -47-a_2.57, 2a_1.67, 0\bigr).
  \end{equation*}

  Case~\ref{item:dim4secondcase}: Taking $\mfa =
\spa{e_1,e_4,e_5,e_6}$, $\omega_0 = -ce^{23}\otimes e_1$ for some
$c\in \bR$ and $\nu(e_1) = (a_2+a_3-a_1)e_1$, $\nu(e_4) =
(a_1-a_2-a_3-\tfrac{c}{2})e_4$, $\nu(e_i) = \tfrac{c}{2} e_i$ for $i =
5,6$ and $\nu(e_j) = 0$ for $j = 2,3$ and setting $b = a_2+a_3$, the
shear gives a cocalibrated $\G_2$-structure on
  \begin{equation*} \Bigl(b.17+c.23, a_2.27, a_3.37,
-\bigl(b+\tfrac{c}{2}\bigr).47, \bigl(\tfrac{c}{2}-a_2\bigr).57,
\bigl(\tfrac{c}{2}-a_3\bigr).67, 0\Bigr).
  \end{equation*}

  For Case~\ref{item:omega}\ref{item:dim2} we need to start with a
different Lie algebra.  We take
  \begin{equation*} (a.47, -a.57, b.37, -a.17, a.27, -b.67, 0)
  \end{equation*} and the cocalibrated $\G_2$-structure $\varphi\in
\Lambda^3 \mfg^*$ given by the same formula as above.  Moreover, let
$\mfa = \spa{e_3,e_6}$, $\omega_0 = -(e^{12}+e^{45})\otimes e_3 -
(e^{15}+e^{24})\otimes e_6$ and $\nu \in \mfg^*\otimes \mfa$ defined
by $\nu(e_3) = -be_3+2a e_6$, $\nu(e_6) = -2a e_3+b e_6$.  Then
$f.(e^{12}+e^{45}) = 2a(e^{15}+e^{24})$ and $f.(e^{15}+e^{24}) =
-2a(e^{12}+e^{45})$, so the shear gives a cocalibrated
$\G_2$-structure on
  \begin{equation*} (a.47, -a.57, -2a.67+12+45, -a.17, a.27,
2a.37+15+24, 0).
  \end{equation*}
\end{example}

\subsubsection{Calibrated $\G_2$-structures}

Given a $\G_2$-structure~$\varphi$ on an almost Abelian Lie algebra
$\mfg$ and unit-length $\alpha\in \mfg^*$ in the annihilator of~$\mfu$, there is
also an almost Hermitian structure $(\sigma,\rho)$ on the codimension
one Abelian ideal~$\mfu$ related to $\varphi$ via $\varphi =
\sigma\wedge \alpha+\rho$, cf.~\cite{MC}.  The \emph{calibrated} case
is when $d\varphi = 0$.  In this situation $f = \ad(X)|_{\mfu}\in
\lie{sl}(\mfu,\rho) \coloneqq \{\,g\in \End(\mfu)\mid g.\rho =
0\,\}\cong \lie{sl}(3,\bC)$ and so $\tr(f) = 0$ and $[f,J] = 0$,
cf.~\cite{F2}.  We aim at a partial classification of left-invariant
shear data $(\inc,\id_{\mfa},\omega)$ as in
Proposition~\ref{pro:sheardata-AALAs} for which the shear of $\varphi$
is again calibrated.

When $U$ is a subspace of~$\mfu$, we write $\proj_U$ for the
orthogonal projection $\mfu \to U$.

\begin{proposition}
  \label{pro:shearcalG2} Let $(\mfg,\mfu,\varphi)$ be a
seven-dimensional almost Abelian Lie algebra with a calibrated
$\G_2$-structure. Fix an $f$-invariant subspace $\mfa$ of $\mfu$ and
let $\omega = \omega_0+\alpha\wedge \nu\in \Lambda^2 \mfg^*\otimes
\mfa$ be as in Proposition~\ref{pro:sheardata-AALAs}.
  \begin{enumerate}[\upshape(a),wide]
  \item\label{item:cal-dim2} If $\ker(\omega_0)$ is $J$-invariant and
of dimension~$2$, then $(\inc,\id_{\mfa},\omega)$ shears
$(\mfg,\varphi)$ to a calibrated structure if and only if $\dim(\mfa)
= 2$ and for some $Y\in \mfa\setminus\{0\}$ either
    \begin{enumerate}[\upshape(i),wide=3em]
    \item\label{item:fonanotequal0} $\omega_0 =
\tr(f|_{\mfa})\bigl(JY\hook \rho\otimes Y-Y\hook \rho\otimes JY\bigr)$
and
      \begin{equation*} \nu \in
-\tr(f|_{\mfa})\,\proj_{\mfa}+\{\,\hat{\nu}\in \sL(\mfu,\rho) \mid
\hat{\nu}|_{\mfa} = 0,\ \hat{\nu}(\mfu)\subset \mfa\,\},
      \end{equation*} or
    \item\label{item:fonaequal0} $f|_{\mfa} = 0$, $\omega_0 = (a
Y\hook \rho+b JY\hook \rho)\otimes Y+ (c Y\hook \rho-a JY\hook
\rho)\otimes JY$ for $(a,b,c)\in \bR^3\setminus\{\mathbf 0\}$ with
$a^2+bc = 0$ and
      \begin{equation*} \nu\in \tilde{\nu}+\{\,\hat{\nu}\in
\sL(\mfu,\rho) \mid \hat{\nu}|_{\mfa} = 0,\ \hat{\nu}(\mfu)\subset
\mfa\,\}
      \end{equation*} with $\tilde{\nu}(Y) = cY-aJY$, $\tilde{\nu}(JY)
= -aY-bJY$, $\tilde{\nu}(\mfa^{\perp}) = \{0\}$.
    \end{enumerate}
  \item\label{item:cal-dim4J} If $\dim(\mfa) = 4$ and $\mfa =
\ker(\omega_0)$ is $J$-invariant, then $(\inc,\id_{\mfa},\omega)$
shears $(\mfg,\varphi)$ to a calibrated structure if and only if
$\omega_0 = \tilde{\omega}\otimes Y$ for a $Y\in \mfa$ with $\norm{Y}
= 1$ and $\tilde{\omega} \in \Lambda^2 \mfu^*$ decomposable, and
    \begin{equation*} \nu = \tilde{\nu} + \{\,\hat{\nu}\in
\lie{sl}(\mfu,\rho) \mid \hat{\nu}(Y) = 0,\hat{\nu}(\mfu)\subset
\mfa\,\}
    \end{equation*} where $\tilde{\nu}$ is given by
    \begin{gather*} \tilde{\nu}(Y) = -\tr(f|_{\mfa})Y-f(Y),\quad
\tilde{\nu}(JY) = \mu Y-\lambda JY-J\proj_U(f(Y)),\\ \tilde{\nu}|_U =
\lambda\id_U +\mu J|_U,\quad \tilde{\nu}(W) =
-\frac{\tilde{\omega}(W,JW)}{2\norm{\rho(Y,W,\any)}^2}
\rho(Y,W,\any)^{\sharp}\ \forall\, W \in \mfa^\bot
    \end{gather*} with $U = \spa{Y,JY}^{\perp}\cap \mfa$, $\lambda =
\tr(f|_{\mfa})+g(f(Y),Y)$ and $\mu = g(f(Y),JY)$.
  \item\label{item:cal-dim4nJ} If $\dim(\mfa) = 4$ and $\mfa =
\ker(\omega_0)$ is not $J$-invariant, then $(\inc,\id_{\mfa},\omega)$
shears $(\mfg,\varphi)$ to a calibrated structure if and only if
$\omega_0 = \tilde{\omega}\otimes Y$ for a unit length $Y\in \mfa \cap
J\mfa$ with $Y\hook \rho|_{\Lambda^2\mfa} = 0$ and decomposable
$\tilde{\omega} \in \mu JY\hook \rho + \llbracket \Lambda^{1,1} \mfu^*
\rrbracket$, such that $\mu\in \bR$ is non-zero, $f(Y) = \lambda Y$
for $\lambda\in \bR$ fulfilling $4\lambda JY\hook \rho|_{\Lambda^2
U} = -J^*\tilde{\omega}|_{\Lambda^2 U}$ and
    \begin{equation*} \nu = \tilde{\nu}+ \{\,\hat\nu \in \End(\mfu)
\mid [\hat\nu,J] = 0,\ \hat{\nu}|_{\spa{Y,JY}} = 0,\ \hat\nu(\mfu)
\subset \spa{Y,JY}\,\}
    \end{equation*} for $\tilde{\nu} =
\diag(-2\lambda,-4\lambda,2\lambda,0)$ with respect to $\mfu =
\spa{Y}\oplus \spa{JY}\oplus U\oplus JU$, $U = \spa{Y,JY}^{\perp}\cap
\mfa$.
  \end{enumerate}
\end{proposition}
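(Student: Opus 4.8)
The plan is to run exactly the machinery of the cocalibrated case, now feeding the calibrating form $\varphi = \sigma\wedge\alpha + \rho$ into Proposition~\ref{pro:sheardata-AALAs}\ref{item:aa-closed} with $r = 3$, so $\chi = \sigma$ and $\tau = \rho$. The shear $\varphi_\mfh$ is then calibrated precisely when $(\inc,\id_{\mfa},\omega)$ is shear data and the two equations of~\eqref{eq:sheareq2} hold; with $(-1)^{r-1} = 1$ these read
\begin{equation*}
  \kappa(\rho\wedge\omega_0) = 0
  \eqand
  \nu.\rho = -\kappa(\sigma\wedge\omega_0),
\end{equation*}
to be solved together with the shear-data relation $f.\omega_0 = -(f+\nu)\circ\omega_0$ from~\eqref{eq:sheareq1}. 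Since $\varphi$ is calibrated we have $f\in\sL(\mfu,\rho)$, hence $[f,J] = 0$ and $\tr f = 0$; these facts, the $f$-invariance of~$\mfa$, and the normalisation $\omega_0|_{\mfa\otimes\mfu} = 0$ (so that $\mfa\subseteq\ker\omega_0$), are the standing data I would use throughout.

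The structural point organising everything is that, by its very definition, the linear map $\End(\mfu)\to\Lambda^3\mfu^*$, $\nu\mapsto\nu.\rho$, has kernel exactly $\sL(\mfu,\rho)$. Thus the second displayed equation pins $\nu$ down modulo $\sL(\mfu,\rho)$: for fixed $\omega_0$ the set of admissible $\nu$ with $\nu(\mfu)\subseteq\mfa$ is either empty or an affine space over $\{\hat\nu\in\sL(\mfu,\rho)\mid\hat\nu(\mfu)\subseteq\mfa,\ \hat\nu|_{\im\omega_0} = 0\}$ (refined case by case), the vanishing constraint arising because evaluating the shear-data relation on $\im(\omega_0)$ fixes $\nu|_{\im\omega_0}$ in terms of $f$ and $\tr(f|_{\mfa})$; non-emptiness is then exactly the solvability condition constraining $\omega_0$. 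Accordingly, in each case I would (1) use $\kappa(\rho\wedge\omega_0) = 0$ and the shear-data relation to force the shape of $\omega_0$ and the value of $\nu$ on $\im\omega_0$, (2) exhibit one explicit particular solution $\tilde\nu$ of $\nu.\rho = -\kappa(\sigma\wedge\omega_0)$, and (3) read off the residual freedom as the stated $\sL(\mfu,\rho)$-model space.

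The case division follows $\ker\omega_0$. In the two four-dimensional cases $\mfa = \ker\omega_0$ forces $\omega_0$ to factor through a $2$-form on the $2$-dimensional quotient $\mfu/\mfa$; as $\Lambda^2(\mfu/\mfa)^*$ is one-dimensional this gives $\omega_0 = \tilde\omega\otimes Y$ with $\tilde\omega$ decomposable (rank two), $\ker\tilde\omega = \mfa$ and $\im\omega_0 = \spa Y$, after which $\kappa(\rho\wedge\omega_0) = (Y\hook\rho)\wedge\tilde\omega$ and $\kappa(\sigma\wedge\omega_0) = (Y\hook\sigma)\wedge\tilde\omega$ drive everything. For the dimension-two case~\ref{item:cal-dim2} the same kind of count forces $\dim\mfa = 2$, and the split into \ref{item:fonanotequal0} and \ref{item:fonaequal0} is dictated by whether $f|_{\mfa}$ vanishes, a trace/invariance argument using $f.\rho = 0$ and $\tr f = 0$, exactly paralleling the proof of Proposition~\ref{pro:shearcocG2}, ruling out the intermediate possibilities and pinning the scalars. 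The $SU(3)$-representation theory of $(\sigma,\rho,J)$—the type decomposition of $\Lambda^\bullet\mfu^*$, the Hodge identities on $[\Lambda_0^{1,1}\mfu^*]$ and $\llbracket\Lambda^{2,0}\mfu^*\rrbracket$, and the contraction identities relating $\rho$, $Y\hook\rho$ and $\sharp$—is the tool converting these wedge conditions into the stated linear-algebra constraints on $\omega_0$ and $\nu$.

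I expect the main obstacle to be steps~(1)--(2) in the four-dimensional cases, above all the non-$J$-invariant case~\ref{item:cal-dim4nJ}: one must first extract from $\kappa(\rho\wedge\omega_0) = 0$ and the shear-data relation the precise normal form $\tilde\omega\in\mu JY\hook\rho + \llbracket\Lambda^{1,1}\mfu^*\rrbracket$ with $f(Y) = \lambda Y$ and the compatibility $4\lambda\,JY\hook\rho|_{\Lambda^2 U} = -J^*\tilde\omega|_{\Lambda^2 U}$, and then verify by direct computation against $\rho$ and $\sigma$ that the block-diagonal $\tilde\nu = \diag(-2\lambda,-4\lambda,2\lambda,0)$ solves $\nu.\rho = -\kappa(\sigma\wedge\omega_0)$. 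The analogous verification in~\ref{item:cal-dim4J}, where $\tilde\nu$ carries the genuinely non-trivial term $-\tfrac{\tilde\omega(W,JW)}{2\norm{\rho(Y,W,\any)}^2}\rho(Y,W,\any)^\sharp$ on $\mfa^\perp$, is comparably delicate. These are the representation-theoretic computations one cannot avoid; the remainder of the argument is bookkeeping of the affine solution spaces and checking that any solution must lie in the asserted normal form modulo the residual $\sL(\mfu,\rho)$-space.
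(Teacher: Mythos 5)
Your plan coincides with the paper's proof: both reduce to the two equations of~\eqref{eq:sheareq2} with $\chi=\sigma$, $\tau=\rho$ plus the relation $f.\omega_0=-(f+\nu)\circ\omega_0$ from~\eqref{eq:sheareq1}, both use that $\dim\ker\omega_0=4$ forces $\omega_0=\tilde\omega\otimes Y$ with $\tilde\omega$ decomposable, and both organise the answer as an affine space of $\nu$'s modelled on a subspace of $\ker(\nu\mapsto\nu.\rho)=\sL(\mfu,\rho)$. The setup and case division are correct and identical to the paper's; the only caveat is that the derivations of the normal forms for $\omega_0$ and the explicit particular solutions $\tilde\nu$ (which constitute the bulk of the paper's argument) are located and acknowledged rather than carried out.
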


\begin{proof} \ref{item:cal-dim2} First observe that we always have
$\im\omega_0\subset \mfa\subset \ker\omega_0$.  Consider the case
$\dim(\mfa) = 2$.  Choose $Y_1\in \ker(\omega_0) = \mfa$ and set $Y_2
= JY_1$.  We may then write $\omega_0 = \sum_{i = 1}^2 \omega_i\otimes
Y_i$ for two-forms $\omega_1,\omega_2$, where $\omega_2 = 0$ in the
case $\dim(\im \omega_0) = 1$.  The second equation
in~\eqref{eq:sheareq2} reads
  \begin{equation}
    \label{eq:cal-d2-kappa} Y_2^b\wedge \omega_1-Y_1^b\wedge \omega_2
= -\kappa(\sigma\wedge \omega_0) = \nu.\rho.
  \end{equation} As $\nu(\mfu)\subset \mfa$ and $\rho(Z,JZ,\cdot)=0$ for all $Z\in \mfu$,
        inserting $Y_{3-i}$ into equation \eqref{eq:cal-d2-kappa} gives $\omega_i\in
\{\tilde{\omega}_1,\tilde{\omega}_2\}$
 for $\tilde\omega_i :=
Y_i \hook\rho$, $i=1,2$. Hence, we may write $\omega_i
= \sum_{j = 1}^2 a_{ij} \tilde{\omega}_j$ for some $A = (a_{ij})\in
M_2(\bR)$.  Equation~\eqref{eq:cal-d2-kappa} then gives $\nu(Y_j) =
\sum_{i = 1}^2 c_{ij} Y_i$ with $C = (c_{ij}) = A^T D$ for $D
= \begin{psmallmatrix} 0 & -1 \\ 1 & 0 \end{psmallmatrix}$.

Since $\tilde{\omega}_i\wedge \tilde{\omega}_j=\delta_{ij} \tilde{\omega}_1^2$ for all
$i,j=1,2$, the first equation in \eqref{eq:sheareq2} reads
  \begin{equation}
    \label{eq:cal-d2-rho} 0 = \kappa(\rho\wedge \omega_0) = \sum_{i =
1}^2 \tilde{\omega}_i\wedge \omega_i =
(a_{11}+a_{22})\tilde{\omega}_1^2.
  \end{equation} So $\tr A = 0$, giving $A\in \lie{sl}(2,\bR)$.

  If $\dim(\mfa) = 1$, then $\omega_2 = 0$ and $\omega_1 =
a_{11}\tilde{\omega}_1$.  But \eqref{eq:cal-d2-rho} gives $a_{11} = 0$
and so $\omega_0 = 0$, which is a contradiction.  Thus $\dim(\mfa)=2$.

  As $\mfa$ is $f$-invariant and $[f,J] = 0$, we have $f(Y_j) =
\sum_{i = 1}^2 b_{ij} Y_i$ with $B = (b_{ij}) =
  \begin{psmallmatrix} b_{11}&b_{12}\\ -b_{12}&b_{11}
  \end{psmallmatrix} \in M_1(\bC) \subset M_2(\bR)$, i.e. $b_{21}=-b_{12}$ and $b_{22}=b_{11}$.
        Since $f.\rho =0$, we also get $f.\tilde{\omega}_j = \sum_{i = 1}^2 b_{ij}
\tilde{\omega}_i$.  Now the first equation in \eqref{eq:sheareq1} is
equivalent to $AB^T = -(B+C)A = -BA-A^TDA = -BA-\det(A)D$, which is
  \begin{gather*} 2a_{11} b_{11} = -(a_{12}+a_{21})b_{12},\quad
2a_{11} b_{12} = (a_{12}+a_{21})b_{11},\\ \det(A) = 2(a_{12}
b_{11}-a_{11} b_{12}).
  \end{gather*} The first two equations are solved if and only if
$a_{12}+a_{21}= 0 = a_{11}$ or $B = 0$.  In the first case, we must
have $a_{12}\neq 0$, as $A\neq 0$, and the last equation implies
$a_{12} = 2b_{11} = \tr(f|_{\mfa})$.  In the second case, we have
$f|_{\mfa} = 0$ and the last equation gives us $a_{11}^2+a_{12}a_{21}
= -\det(A) = 0$.  This gives the two cases claimed.

  \ref{item:cal-dim4J}~\&~\ref{item:cal-dim4nJ} First note that
$\dim\ker\omega_0 = \dim\mfa = 4$, implies that $\dim\im\omega_0
\leqslant \dim\Lambda^2(\ker\omega_0{}^\bot) = 1 $.  We thus have
$\omega_0 = \tilde{\omega}\otimes Y$ for some non-zero decomposable
two-form $\tilde{\omega}$ and a $Y\in\mfa$ with $\norm{Y} = 1$. Now
the first equation in \eqref{eq:sheareq2} reads $Y\hook \rho \wedge
\tilde{\omega} = 0$ and the second equation in \eqref{eq:sheareq2}
reads $JY^b\wedge \tilde{\omega} = \nu.\rho$. Moreover, using
$f.\omega_0 = \tr(f|_{\mfa})\omega_0$ we see that the first equation
in \eqref{eq:sheareq1} is equivalent to $\nu(Y) = -f(Y)-\tr(f|_{\mfa})
Y$.

  \ref{item:cal-dim4J} Here, $Y\hook \rho\wedge \tilde{\omega}=Y\hook
(\rho\wedge \tilde{\omega}) = 0$ holds as $\mfa =
\ker(\tilde{\omega})$ is $J$-invariant, i.e.\ $\tilde{\omega}$ is a
$(1,1)$-form.  So we only have to check that the given $\tilde{\nu}$
is a solution of $JY^b\wedge \tilde{\omega} = \tilde{\nu}.\rho$.  To
simplify the notation, we set
  \begin{equation*} a(W) =
\frac{\tilde{\omega}(W,JW)}{2\norm{\rho(Y,W,\any)}^2} =
\frac{\tilde{\omega}(W,JW)}{2\norm{\rho(JY,JW,\any)}^2}
  \end{equation*} for $W\in \mfa^{\perp}$.  For such~$W$, we have
$JW\in \mfa^{\perp}$ too, so $\tilde{\nu}(W)\in U\subset \mfu$.
Moreover, as $\rho$ is zero when evaluated on any pair $A,JA$ and as $f.\rho=0$,
straightforward computations give us that $\tilde{\nu}.\rho$ is zero on $\Lambda^3 \mfa + \Lambda^2
\mfa\wedge \mfa^{\perp} + U\wedge \Lambda^2 \mfa^{\perp}$.  As
$\tilde{\nu}(JW) = -J\tilde{\nu}(W)$ for $W\in \mfa^{\perp}$, we
obtain $(\tilde{\nu}.\rho)(Z,W,JW) = 2\rho(Z,JW,\tilde{\nu}(W))$.
Thus
  \begin{equation*} (\tilde{\nu}.\rho)(Z,W,JW) =
2\rho(Z,JW,\tilde{\nu}(W)) = 2a(W)g(\rho(Z,JW,\any),\rho(JY,JW,\any))
  \end{equation*} for all $W\in\mfa^{\perp}$ and all $Z\in
\spa{Y,JY}$.  For $Z = Y$, this equals zero since $\rho(Y,JW,\any) =
J\rho(JY,JW,\any)$; for $Z = JY$, we obtain
  \begin{equation*} (\tilde{\nu}.\rho)(JY,W,JW) = \tilde{\omega}(W,JW)
= (JY^b \wedge \tilde{\omega})(JY,W,JW),
  \end{equation*} as required.

  \ref{item:cal-dim4nJ} Inserting $JY$ into $Y\hook \rho\wedge
\tilde{\omega} = 0$, we get $JY\hook \tilde{\omega} = 0$, as $Y\hook
\rho$ has rank four.  So $JY\in \ker(\omega_0) = \mfa$ and the
equation may be considered as one on the four-dimensional space
$\spa{Y,JY}^{\perp}$.  But then $\tilde{\omega}$ solves
$Y\hook\rho\wedge\tilde\omega = 0$ if and only if $\tilde{\omega}\in
\llbracket \Lambda^{1,1} (\spa{Y,JY}^{\perp})^*\rrbracket\oplus
\spa{JY\hook \rho}$.  As $\mfa = \ker(\tilde{\omega})$ is not
$J$-invariant, the $\spa{JY\hook \rho}$-part of $\tilde{\omega}$ must
be non-zero as claimed.

  Assume now that $JY^b\wedge \tilde{\omega} = \nu.\rho$ holds.
Inserting both $Y$ and $JY$ into this equation one obtains
$\rho(f(Y),JY,\any)-\rho(Y,\nu(JY),\any) = 0$ and so
$\rho(Y,\nu(JY)+Jf(Y),\any) = 0$.  Thus $\nu(JY) = -Jf(Y)$ up to terms
in $\spa{Y,JY}$.  However, the hypotheses give $JU\cap \mfa = \{0\}$,
so $f(Y),\nu(JY)\in \spa{Y,JY}$.

  Inserting $Z\in U$ and $JZ\notin\mfa$ in to the same equation, we
get $\nu(JZ) = J\nu(Z)$ up to terms in $\spa{Z,JZ}$.  So there are
$\lambda_1,\lambda_2\in \bR$ such that $\nu(Z)-\lambda_1 Z,
\nu(JZ)-\lambda_2 Z\in \spa{Y,JY}$ for all $Z\in U$. Take now $Z,W\in
U$ such that $Y,Z,W$ is a $\bC$-basis of~$\mfu$. Then $\rho(Y,Z,W) =
0$ from $Y\hook \rho\wedge \tilde{\omega} = 0$. So we have $Y\hook
\rho|_{\Lambda^2\mfa}=0$ and must have $\rho(JY,Z,W)\neq 0$. Hence,
  \begin{equation*} 0 = (JY^b\wedge \tilde{\omega})(Y,Z,W) =
(\nu.\rho)(Y,Z,W) = -\rho(\nu(Y),Z,W)
  \end{equation*} implies $\nu(Y)\in \spa{Y}$.  So $f(Y) = \lambda Y$
for some $\lambda\in \bR$.  As
  \begin{equation*} 0 = (f.\rho)(JY,Z,W) =
-(\tr(f|_{\mfa})-\lambda)\rho(JY,Z,W),
  \end{equation*} we must have $\tr(f|_{\mfa}) = \lambda$.  Thus
$\nu(Y) = -2\lambda Y$.

  Similarly, $(\nu.\rho)(Y,Z,JW) = 0$ yields $\lambda_1 = 2\lambda$
and $\nu.\rho(JY,Z,W)=0$ implies that $\nu(JY) = a Y-4\lambda JY$ for
some $a\in \bR$.  The equality $0 = (\nu.\rho)(Y,JZ,JW)$ gives us $\lambda_2 =
0$ and then the equality $0 = (\nu.\rho)(JY,Z,JW)$ gives us $a=0$.

  Finally, inserting $JY,JZ,JW$ into $JY^b\wedge \tilde{\omega} =
\nu.\rho$, we get $4\lambda\rho(JY,Z,W) = -\tilde{\omega}(JZ,JW)$.
This gives that the difference $\hat{\nu}$ between two solutions
of $JY^b\wedge \tilde{\omega} = \nu.\rho$ are those
$\hat{\nu}\in \End(\mfu)$ with $\hat{\nu}|_{\spa{Y,JY}} = 0$,
$\hat{\nu}(\mfu)\subset \spa{Y,JY}$ and $\hat{\nu}\in \sL(\mfu,\rho)$,
which here is equivalent to $[\hat{\nu},J] = 0$, as claimed. Conversely, the above computations
show that $\tilde{\nu}$ as in the statement fulfils
$JY^b\wedge \tilde{\omega} = \tilde{\nu}.\rho$, completing the proof.
\end{proof}

Let us give explicit examples for all the cases in
Proposition~\ref{pro:shearcalG2}.
\begin{example}
  \label{ex:shearcalibrated} Start with the almost Abelian Lie algebra
defined by
  \begin{equation*} (a.17, a.27, b.37,b.47,c57,c.67, 0)
  \end{equation*} for $a, b, c\in \bR$ with $a+b+c =0$ and consider
the calibrated $\G_2$-structure $\varphi=
127+347+567+135-146-236-245$.

  Case~\ref{item:cal-dim2}\ref{item:fonanotequal0}: Taking $\mfa =
\spa{e_1,e_2}$, $\omega_0 = 2a\bigl((e^{36}+e^{45})\otimes
e_1+(e^{35}-e^{46})\otimes e_2\bigr)$ and $\nu\in \End(\mfu)$ defined
by $\nu|_{\mfa} = -2a\id_{\mfa}$, $\nu(e_i) = 0$ for $i = 3,4,5,6$, we
may shear $\varphi$ to a calibrated $\G_2$-structure on
  \begin{equation*} \qquad (-a.17-2a.(36+45), -a.27-2a.(35-46),
b.37,b.47,c.57,c.67, 0).
  \end{equation*}

  Case~\ref{item:cal-dim2}\ref{item:fonaequal0}: We assume now that
$a=0$ and take $\mfa=\spa{e_1,e_2}$, $a_1,a_2,a_3\in \bR$ with
$a_1^2+a_2a_3=0$,
  \begin{equation*} \omega_0 = \bigl(a_1 (e^{35}-e^{46}) + a_2
(e^{36}+e^{45})\bigr)\otimes e_1 - \bigl(a_3 (e^{35}-e^{46}) - a_1
(e^{36}+e^{45})\bigr)\otimes e_2
  \end{equation*} and $\nu\in \mfu^*\otimes \mfa$ defined by $\nu(e_1)
= a_3 e_1+a_1 e_2$, $\nu(e_2) = a_1 e_1-a_2 e_2$ and $\nu(e_i) = 0$
for all $i=3,\dots,6$.  With this data, we may shear $\varphi$ to a
calibrated $\G_2$-structure on
  \begin{multline*} (a_3.17-a_1.(35-46-27)- a_2.(36+45),\\
-a_2.27+a_3.(35-46)-a_1.(36+45-17), b.37, b.47, -b.57, -b.67, 0).
  \end{multline*}

  Case~\ref{item:cal-dim4J}: Taking $\mfa = \spa{e_1,e_2,e_3,e_4}$,
$\omega_0 = -e^{56}\otimes e_1$ and $\nu\in \End(\mfu)$ given by
$\nu|_{\spa{e_1,e_2}} = -(3a+2b) \id_{\spa{e_1,e_2}}$,
$\nu|_{\spa{e_3,e_4}} = (3a+2b) \id_{\spa{e_3,e_4}}$, $\nu(e_5) =
\tfrac12e_3$ and $\nu(e_6) = -\tfrac12e_4$, we may shear $\varphi$ to
a calibrated $\G_2$-structure on
  \begin{equation*} (2c.17+56,2c.27,-3c.37+\tfrac{1}{2}.57,
-3c.47-\tfrac{1}{2}.67,c.57,c.67, 0).
  \end{equation*}

  Case~\ref{item:cal-dim4nJ}: Taking $\mfa = \spa{e_1,e_2,e_4,e_5}$,
$\omega_0 = 4a e^{36}\otimes e_1$ and $\nu\in \End(\mfu)$ with
$\nu(e_1) = -2a e_1$, $\nu(e_2) = -4a e_2$, $\nu|_{\spa{e_4,e_5}} =
2a\id_{\spa{e_4,e_5}}$ and $\nu|_{\spa{e_3,e_6}} = 0$, we may shear
$\varphi$ to a calibrated $\G_2$-structure on
  \begin{equation*} (-a.17-4a.36,-3a.27,b.37,(b+2a).47,(a-b).57,-(a+b).67,
0).
  \end{equation*}
\end{example}

Note that in cases \ref{item:cal-dim4J} and~\ref{item:cal-dim4nJ} of
Proposition~\ref{pro:shearcalG2}, the shear Lie algebra is of the form
$(\mfh_3\oplus \bR^3) \rtimes \bR$, where $\mfh_3$ is the
three-dimensional Heisenberg algebra. We thus obtain calibrated
$\G_2$-structures on such Lie algebras as the shears of calibrated
$\G_2$-structures on almost Abelian Lie algebras. In fact, we get all
possible calibrated $\G_2$-structures on that class of Lie algebras this way:

\begin{corollary}
  \label{co:calG2} Let $\mfg$ be a seven-dimensional Lie algebra with
a codimension one nilpotent ideal $\mfu\cong \mfh_3\oplus \bR^3$.  Let
$\varphi\in \Lambda^3 \mfg^*$ be a $\G_2$-structure.  Fix $X \bot
\mfu$ with $\norm X = 1$.

  Write $h = \ad(X)|_{\mfu}\in \lie{der}(\mfu)$ and
$(g,J,\sigma,\rho)$ for the special almost Hermitian structure
on~$\mfu$ induced by~$\varphi$.  Set
  \begin{equation*} U_1 = [\mfu,\mfu]\oplus J[\mfu,\mfu],\quad U_2 =
U_1^{\perp}\cap \mfz(\mfu).
  \end{equation*}

  Then $\varphi\in \Lambda^3 \mfg^*$ is calibrated if and only if
$J[\mfu,\mfu]\subset \mfz(\mfu)$ and either
  \begin{enumerate}[\upshape (i)]
  \item\label{item:cor-cal-1} $\mfz(\mfu)$ is $J$-invariant and there
are $\lambda,\mu\in \bR$ and linear maps $h_{ij} \colon U_j\to U_i$
with $[h_{ij},J] = 0$ such that
    \begin{equation*} h = \begin{pmatrix} -2 \lambda & h_{12} & h_{13}
\\ 0 & 3\lambda + \mu J & h_{23}+h \\ 0 & 0 & -\lambda-\mu J
      \end{pmatrix}
    \end{equation*} on $\mfu = U_1\oplus U_2\oplus U_3$, where $U_3:=\mathfrak{z}(\mfu)^{\perp}$ and $h
\colon U_3\to U_2$ is given by
    \begin{equation*} h(Z) =
-\frac{\norm{[Z,JZ]}^2}{2\norm{\rho([Z,JZ],Z,\any)}^2}
\rho([Z,JZ],Z,\any)^{\sharp},
    \end{equation*} for all $Z \in U_3$, or
  \item\label{item:co-cal-2} $\mfz(\mfu)$ is not $J$-invariant,
$\rho$~is zero on~$[\mfu,\mfu]\wedge \Lambda^2 \mfz(\mfu)$ and there
is an $h_1\in \lie{sl}(\mfu,\rho)$ with $U_1 \subset \ker h_1$ and
$h_1(\mfz(\mfu))\subset \mfz(\mfu)$ such that
    \begin{equation*} h =
\diag(-2\lambda,-6\lambda,3\lambda,-\lambda)+h_1
    \end{equation*} on $\mfu = [\mfu,\mfu]\oplus J[\mfu,\mfu]\oplus
U_2\oplus JU_2$ with $\lambda\in \bR$ specified by $-8\lambda
\rho(Z_1,Z_2,\any)^{\sharp} = J[JZ_1,JZ_2]$ for any basis $Z_1,Z_2$
of~$U_2$.
  \end{enumerate}
\end{corollary}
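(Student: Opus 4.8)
The plan is to recognise $\mfg$ as a shear of an almost Abelian Lie algebra carrying the \emph{same} $\G_2$-form $\varphi$, and then to read off calibration from Proposition~\ref{pro:shearcalG2}\ref{item:cal-dim4J},\ref{item:cal-dim4nJ} by means of the duality in Theorem~\ref{th:duality}. Since $\mfu\cong\mfh_3\oplus\bR^3$ has one-dimensional commutator and four-dimensional centre, I would take $\mfa=\mfz(\mfu)$, $\xi=\inc$, $a=\id_{\mfa}$ and let $\omega_0\in\Lambda^2\mfu^*\otimes\mfa$ be minus the $\mfa$-component of the Lie bracket of~$\mfu$, so that $\im\omega_0=[\mfu,\mfu]$ is one-dimensional, $\ker\omega_0=\mfz(\mfu)=\mfa$, and $\omega_0|_{\mfa\otimes\mfu}=0$ because $\mfa$ is central. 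The associated shear abelianises the bracket of~$\mfu$ and produces an almost Abelian $\mfg_{\mathrm{aA}}$; as noted for the left-invariant shear on Lie algebras in~\S\ref{sec:shears-LAs-revisited}, the underlying vector space and the form $\varphi$ are unchanged, only the bracket varies, and a direct comparison gives $h=\ad_{\mfg} X|_{\mfu}=f+\nu$ with $f=\ad_{\mfg_{\mathrm{aA}}} X|_{\mfu}$ and $\nu$ the endomorphism encoded by the $\alpha\wedge\nu$-part of~$\omega$.

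Because $\ker\omega_0=\mfz(\mfu)$ is four-dimensional and $\im\omega_0$ one-dimensional, this is exactly the situation of Proposition~\ref{pro:shearcalG2}\ref{item:cal-dim4J}--\ref{item:cal-dim4nJ} (the two-dimensional case \ref{item:cal-dim2} is excluded on dimensional grounds). I would then use that derivations preserve $[\mfu,\mfu]$ and $\mfz(\mfu)$: thus $h$ automatically preserves $U_1=[\mfu,\mfu]\oplus J[\mfu,\mfu]$ and $\mfz(\mfu)=U_1\oplus U_2$, which forces the block-upper-triangular shape in case~\ref{item:cor-cal-1}. The two cases of the corollary are separated by whether $\mfa=\mfz(\mfu)$ is $J$-invariant: case~\ref{item:cal-dim4J} corresponds to $J$-invariant $\mfz(\mfu)$ (corollary~\ref{item:cor-cal-1}) and case~\ref{item:cal-dim4nJ} to non-$J$-invariant $\mfz(\mfu)$ (corollary~\ref{item:co-cal-2}); in both, $Y,JY\in\mfa$ yields the standing hypothesis $J[\mfu,\mfu]\subset\mfz(\mfu)$ for free.

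It then remains to translate the admissible $(\omega_0,\nu)$ of Proposition~\ref{pro:shearcalG2} into the stated normal forms for $h=f+\nu$, using $f\in\sL(\mfu,\rho)$, $[f,J]=0$ and the explicit $\tilde\nu$ together with the affine freedom $\hat\nu$. In case~\ref{item:cor-cal-1} the $J$-invariant pieces $U_1$, $U_2=U_1^\bot\cap\mfz(\mfu)$, $U_3=\mfz(\mfu)^\bot$ produce the diagonal blocks $-2\lambda$, $3\lambda+\mu J$, $-\lambda-\mu J$ and the prescribed $h\colon U_3\to U_2$; in case~\ref{item:co-cal-2} one works on $\spa{Y}\oplus\spa{JY}\oplus U_2\oplus JU_2$, where $\tilde\nu=\diag(-2\lambda,-4\lambda,2\lambda,0)$ combines with $f$ to give $\diag(-2\lambda,-6\lambda,3\lambda,-\lambda)+h_1$ after rescaling the parameter $\lambda$ by a factor two. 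A trace check, $\tr h=\tr\nu$, and a comparison of the defining relations for $\lambda$ and $\mu$ confirm the identifications, while the residual $\hat\nu$, $h_1$ match the off-block freedom.

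The main obstacle is the completeness (only-if) direction: that an arbitrary calibrated $\varphi$ on $\mfg$ really does come from a \emph{calibrated} $\mfg_{\mathrm{aA}}$, i.e.\ that $\nu$ can be chosen with $f=h-\nu\in\sL(\mfu,\rho)$ while keeping the shear data admissible. I would settle this by computing $d\varphi=0$ on $\mfg$ directly: decomposing $\varphi=\sigma\wedge\alpha+\rho$ and using that, for $\beta\in\mfu^*$, the differential $d\beta$ equals the Chevalley--Eilenberg differential $d_{\mfu}\beta$ on $\Lambda^2\mfu$ (supported on $[\mfu,\mfu]^*$) minus $\alpha\wedge\beta(h\,\any)$, one obtains constraints on $(h,J,\rho)$ that must be shown to coincide with \eqref{eq:sheareq1}--\eqref{eq:sheareq2}. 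Equivalently, since the form is literally unchanged, Corollary~\ref{cor:differentials} shows that $d_{\mfg}\varphi$ and $d_{\mfg_{\mathrm{aA}}}\varphi$ differ precisely by $(\inc\hook\varphi)\wedge\omega$, whose vanishing is \eqref{eq:sheareq2}; hence calibration of $\varphi$ on the two sides is equivalent under the admissibility conditions, and Theorem~\ref{th:duality} guarantees that the un-sheared $\mfg_{\mathrm{aA}}$ is genuinely almost Abelian and shears back to~$\mfg$. The delicate residual points—the exact formula for $h\colon U_3\to U_2$ and the relation fixing $\lambda$ in case~\ref{item:co-cal-2}—are then exactly the component identities already established in the proof of Proposition~\ref{pro:shearcalG2}.
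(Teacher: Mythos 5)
Your proposal is correct and takes essentially the same route as the paper: the forward implication is read off from Proposition~\ref{pro:shearcalG2}\,\ref{item:cal-dim4J} and \ref{item:cal-dim4nJ}, while the converse shears $(\mfg,\varphi)$ to an almost Abelian algebra using $\mfa=\mfz(\mfu)$ and $\omega_0$ equal to minus the $\mfa$-valued part of the bracket of~$\mfu$, and then inverts via Theorem~\ref{th:duality}. Note only that the paper must still explicitly exhibit a $\nu$ satisfying \eqref{eq:sheareq2} for this reverse shear (the computations parallel, but are not literally contained in, the proof of Proposition~\ref{pro:shearcalG2}) and derives the necessary conditions $\rho|_{[\mfu,\mfu]\wedge\Lambda^2\mfz(\mfu)}=0$ and $J[\mfu,\mfu]\subset\mfz(\mfu)$ directly from $d\varphi=0$, exactly as you anticipate.
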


\begin{proof} The derivations $h$ of $\mfu$ on the shear obtained from
Proposition~\ref{pro:shearcalG2} \ref{item:cal-dim4J} and
\ref{item:cal-dim4nJ} are exactly those given in
Corollary~\ref{co:calG2} \ref{item:cor-cal-1} and \ref{item:co-cal-2}:
this may be seen by straightforward computations using $h =
f+\tilde{\nu}+\hat{\nu}$ and $[W_1,W_2] = \tilde{\omega}(W_1,W_2)Y$,
for any $W_1,W_2\in \mfz(\mfu)^{\perp}$, and for~\ref{item:co-cal-2},
$\rho(Z_1,Z_2,\any)^{\sharp}\in \spa{JY}$ in
Proposition~\ref{pro:shearcalG2}\ref{item:cal-dim4nJ}.  So the
direction ``$\Leftarrow$'' follows.

For the converse direction, we show that we can shear, with
left-invariant data $(\inc,\id_{\mfa},\omega_0)$, any calibrated
$\G_2$-structure $\varphi$ on an almost nilpotent Lie algebra of the
form $(\mfh_3\oplus \bR^3)\rtimes \bR$ to one on an almost Abelian Lie
algebra. As a left-invariant shear can be inverted by Theorem~\ref{th:duality}, we may
obtain $\varphi$ as the shear of a calibrated $\G_2$-structure on an
almost Abelian Lie algebra. Now
Proposition~\ref{pro:shearcalG2} \ref{item:cal-dim4J} and
\ref{item:cal-dim4nJ} contain all possible calibrated shears of
calibrated almost Abelian Lie algebras to Lie algebras of the form $(\mfh_3\oplus \bR^3)\rtimes \bR$
provided we have $\mfa=\ker(\omega_0)$. However, if $\mfa\subset \ker(\omega_0)$, we may simply enlarge $\mfa$
to the $f$-invariant subspace $\ker(\omega_0)$ and the direction ``$\Rightarrow$'' then follows.

So let $\varphi$ be calibrated and note that $\mfz(\mfu)$ is an
$h$-invariant subspace of~$\mfu$.  For the shear, we take $\mfa =
\mfz(\mfu)$ and $\omega = \omega_0+\alpha\wedge \nu$ with $\omega_0\in
\Lambda^2\mfu^*\otimes \mfa$, $\nu\in \mfu^*\otimes
\mfa\subset\End(\mfu)$ and $\alpha\in \mfg^*$ uniquely defined by
$\alpha(X) = 1$ and $\alpha(\mfu) = \{0\}$. To shear to an almost
Abelian Lie algebra, we must take $\omega_0 =
-[\proj_{\mfu}(\any),\proj_{\mfu}(\any)]$. Then $\im(\omega_0) =
[\mfu,\mfu]$. Similarly to the proof of
Proposition~\ref{pro:sheardata-AALAs} and using that $\mfa$ is
central, we get $\gamma = \alpha\otimes h|_{\mfa}$ and $\eta =
\gamma-\omega|_{\mfa\otimes \mfg} = \alpha\otimes (h+\nu)|_{\mfa}$.
Furthermore, the Jacobi identity gives us $d\omega_0|_{\Lambda^3 \mfu}
= 0$, so $d\omega_0 = \alpha\wedge h.\omega_0$. Hence,
$(\inc,\id_{\mfa},\omega)$ defines left-invariant shear data on~$G$ if
and only if
  \begin{equation*}
    \begin{split} 0 &= d\omega+\eta\wedge\omega = \alpha\wedge
h.\omega_0 - \alpha\wedge d_{\mfu}\nu + \alpha\wedge (h+\nu)\circ
\omega_0\\ &= \alpha\wedge (h.\omega_0-d_{\mfu}\nu+(h+\nu)\circ
\omega_0),
    \end{split}
  \end{equation*} giving $h.\omega_0-d_{\mfu}\nu+(h+\nu)\circ \omega_0
= 0$, where $d_{\mfu}$ is the differential of~$\mfu$.  However, $h$ is
a derivation, so
  \begin{equation*} (h.\omega_0-d_{\mfu}\nu)(Z_1,Z_2) =
[h(Z_1),Z_2]+[Z_1,h(Z_2)]+\nu[Z_1,Z_2] = -\bigl((h+\nu)\circ \omega_0\bigr)(Z_1,Z_2)
  \end{equation*} for all $Z_1, Z_2\in \mfu$.  Thus
$(\inc,\id_{\mfa},\omega)$ always defines left-invariant shear data on
$G$.

  \ref{item:cor-cal-1} Here, we set $\nu|_{\mfa} = 0$ and $\nu(Z) =
\tfrac{\norm{[Z,JZ]}^2}
{2\norm{\rho([Z,JZ],Z,\any)}^2}\rho([Z,JZ],Z,\any)^{\sharp}$ for any
$Z\in \mfa^{\perp}\subset \mfu$. The shear is again calibrated if and
only if \eqref{eq:sheareq2} holds. The first equation in
\eqref{eq:sheareq2} is equivalent to the vanishing of the
anti-symmetrisation of $\rho([\any,\any],\any,\any)$, and so is
satisfied since $[\mfu,\mfu]\subset \mfz(\mfu)$ and $\mfz(\mfu)$ is
four-dimensional and $J$-invariant.  The second equation in
\eqref{eq:sheareq2} is given by $\gamma = \nu.\rho$ with
$\gamma(X,Y,Z) = \sum_{\mathrm{cyclic}}g([X,Y],JZ)$ for $X,Y,Z\in
\mfu$.  As $\nu(\mfu)\subset \mfz(\mfu)\cap ([\mfu,\mfu]\oplus
J[\mfu,\mfu])^{\perp}$, both sides of the equation are zero on
$\Lambda^2 \mfz(\mfu)\wedge \mfu+\Lambda^3 ([\mfu,\mfu]\oplus
J[\mfu,\mfu])^{\perp}$.  Finally, a straightforward computation yields
  \begin{equation*} (\nu.\rho)(Y,Z,JZ) =
\frac{\norm{[Z,JZ]}^2}{\norm{\rho([Z,JZ],Z,\any)}^2}
g(\rho(JY,Z,\any),\rho([Z,JZ],Z,\any))
  \end{equation*} for any $Y\in [\mfu,\mfu]\oplus J[\mfu,\mfu]$ and
any $Z\in \mfz(\mfu)^{\perp}$.  For $Y\in [\mfu,\mfu]$, the right-hand
side is zero and for $Y\in J[\mfu,\mfu]$, one has $JY\in \spa{[Z,JZ]}$
and so $\nu.\rho(Y,Z,JZ) = g([Z,JZ],JY) = \gamma(Y,Z,JZ)$ as we
wanted.  Hence, the shear is calibrated.

  \ref{item:co-cal-2} Inserting into $0 = d\varphi$ two non-zero
elements of $\mfz(\mfu)^{\perp} \subset \mfu$ and two elements of
$\mfz(\mfu)$, we obtain $\rho|_{[\mfu,\mfu]\wedge \Lambda^2
\mfz(\mfu)} = 0$.  This implies $J[\mfu,\mfu]\subset \mfz(\mfu)$ as
otherwise we may take $Y\in [\mfu,\mfu]$, $Z_1\in \mfz(\mfu)\cap J\mfz(\mfu)$ and $Z_2\in
\mfz(\mfu)$ such that $Y,Z_1,Z_2$ is a $\bC$-basis of $\mfu$ and so
must have $0\neq \rho(Y,Z_1,Z_2)$ or $0\neq \rho(Y,JZ_1,Z_2)$, a
contradiction.  After these preliminary considerations,
set $U:=\mfz(\mfu)\cap ([\mfu,\mfu]\oplus J[\mfu,\mfu])^{\perp}$, define
$\lambda\in \bR$ via the formula $4\lambda \rho(Z_1,Z_2,\any)^{\sharp}
= -J[Z_1,Z_2]$, where $Z_1,Z_2$ is any basis of $JU$,
and define $\nu\in \mfu^*\otimes \mfa$ by $\nu(Y) = -2\lambda Y$,
$\nu(JY) = -4\lambda JY$ for all $Y\in [\mfu,\mfu]$, $\nu(Z) =
2\lambda Z$ for all $Z\in U$ and $\nu|_{JU} = 0$.  Firstly,
$\rho|_{[\mfu,\mfu]\wedge \Lambda^2 \mfz(\mfu)} = 0$ implies that the
anti-symmetrisation of $\rho([\any,\any],\any,\any)$ vanishes.
Furthermore, both $\nu.\rho$ and $\gamma$ as above, are zero on
$\Lambda^3 \mfz(\mfu)+\left(JY^{\perp}\cap \mfz(\mfu)\right)\wedge
\Lambda^2 \mfu$.  Finally, for $Y\in [\mfu,\mfu]$ and $Z_1,Z_2\in JU$ we get
  \begin{equation*}
    \begin{split} (\nu.\rho)(JY,Z_1,Z_2) &= 4\lambda\rho(JY,Z_1,Z_2) =
g(4\lambda \rho(Z_1,Z_2,\any)^{\sharp},JY)\\ &= -g(J[Z_1,Z_2],JY) =
g([Z_1,Z_2],J(JY)) = \gamma(JY,Z_1,Z_2),
    \end{split}
  \end{equation*} as required.
\end{proof}

\subsubsection{Almost semi-K\"ahler structures}

An almost Hermitian structure $(g,J,\sigma)$ on a $2n$-dimensional
manifold is called \emph{almost semi-K\"ahler} if $d(\sigma^{n-1}) =
0$.  Suppose $(g,J,\sigma)$ is an almost semi-K\"ahler structure on a
$2n$-dimensional almost Abelian Lie algebra $(\mfg,\mfu)$. Fix a unit
vector $X\in \mfu^\bot \subset \mfg$ and let $\alpha\in \mfu^\circ
\subset \mfg^*$ be the element with $\alpha(X) = 1$.  Then $\sigma =
(JX)^b\wedge\alpha+\sigma_1$ for some $\sigma_1$ with kernel
$\spa{X,JX}$.  Thus
\begin{equation*} \sigma^{n-1} = (n-1) (JX)^b\wedge
\sigma_1^{n-2}\wedge \alpha + \sigma_1^{n-1}
\end{equation*} and the almost semi-K\"ahler condition is equivalent
to $f.\sigma_1^{n-1} = 0$ for $f = \ad(X)|_{\mfu}$.  Since
$\sigma_1^{n-1}$ defines a volume form on $U \coloneqq
\spa{X,JX}^{\perp}$, this is the same as $f(JX) = \tr(f) JX$.

The first equation in \eqref{eq:sheareq2} is always satisfied, as
$\sigma_1^{n-1}\wedge \omega_0$ is an $n$-form with values in $\mfa$
on the $(n-1)$-dimensional vector space~$\mfu$.

Let us consider the case $\im(\omega_0) = \spa{JX}\subset
\ker(\omega_0) = \mfa$ with $\mfa$ an $f$-invariant subspace
of~$\mfu$.  Then $\omega_0 = \tilde{\omega}\otimes JX$ for a non-zero
$\tilde{\omega}\in \Lambda^2 \mfu^*$ and the first equation in
\eqref{eq:sheareq1} is equivalent to $f.\tilde{\omega} = \lambda
\tilde{\omega}$ and $\nu(JX) = -(\lambda+\tr(f)) JX$, for some
$\lambda\in \bR$.  The second equation in \eqref{eq:sheareq2} reads
$\tilde{\omega}\wedge\sigma_1^{n-2} = \nu.\sigma_1\wedge
\sigma_1^{n-2}$.  This is fulfilled if and only if
$\tilde{\omega}-\nu.\sigma_1\in [\Lambda_0^{1,1}U^*]\oplus \llbracket
\Lambda^{2,0}U^*\rrbracket$. Hence, if, e.g., $\tilde{\omega}\in [\Lambda_0^{1,1}U^*]\oplus \llbracket
\Lambda^{2,0} U^*\rrbracket$ is such that $f.\tilde{\omega} = \lambda
\tilde{\omega}$ for some $\lambda\in \bR$, then one obtains left-invariant shear data with the
shear being again almost semi-K\"ahler by taking
$\nu\in \End(\mfu)$ with $\nu|_U = 0$ and $\nu(JX) = -(\lambda+\tr(f))
JX$.

\begin{example}\label{ex:almostSemiKaehler} To get an explicit
example, take the Lie algebra
  \begin{equation*} (a_1.16,a_2.26,a_3.36,a_4.46,a_5.56,0),\qquad
\sum_{i=1}^4 a_i = 0.
  \end{equation*} Then $\mfg$ admits an almost semi-K\"ahler structure
with $\sigma = 12+34+56$.  One choice of shear is via $\omega_0 =
-e^{13}\otimes e_5$, $\nu = (a_1+a_3-a_5)e^5\otimes e_5$, giving an
almost semi-K\"ahler structure on
  \begin{equation*}
(a_1.16,a_2.26,a_3.36,a_4.46,(a_1+a_3).56+13,0).
  \end{equation*} For other choices, take $\tilde{\omega}$ to be
$e^{14}$, $e^{23}$ or $e^{24}$.

Another example may be obtained if $a1=a2=-a3$. Then $(\mfg,\sigma)$
is even \emph{semi-K\"ahler}, i.e.\ $J$ is integrable.
Moreover, if we shear $(\mfg,\sigma)$ with
$\omega_0 = -(e^{13}+e^{24})\otimes e_5$, $\nu = -a_5 e^5\otimes e_5$,
we get a semi-K\"ahler structure on
\begin{equation*}
  (a_1.16,a_1.26,-a_1.36,-a_1.46,13+24,0)
\end{equation*}
by the above and Proposition~\ref{pro:Nijenhuis} as $e^{13}+e^{24}$ is
a $(1,1)$-form on $(\mfg,J)$.
\end{example}

\providecommand{\bysame}{\leavevmode\hbox to3em{\hrulefill}\thinspace}
\providecommand{\MR}{\relax\ifhmode\unskip\space\fi MR }
\providecommand{\MRhref}[2]{%
  \href{http://www.ams.org/mathscinet-getitem?mr=#1}{#2}
}
\providecommand{\href}[2]{#2}


\begin{thebibliography}{ACD01}

\bibitem[ACD01]{ACD}
D.~V. Alekseevsky, V.~Cort\'{e}s, and C.~Devchand, \emph{Special complex
  manifolds}, J. Geom. Phys. \textbf{42} (2001), no.~1--2, 85--105.

\bibitem[BDV09]{BDV}
M.~L. Barberis, I.~G. Dotti, and M.~Verbitsky, \emph{Canonical bundles of
  complex nilmanifolds, with applications to hypercomplex geometry}, Math. Res.
  Lett. \textbf{16} (2009), no.~2, 331--347.

\bibitem[Bry06]{Br}
R.~L. Bryant, \emph{Some remarks on $\mathrm{G}_2$-structures}, Proceedings of
  the {G}\"okova {G}eometry-{T}opology {C}onference 2005 (S.~Akbulut,
  T.~\"Onder, and R.~J. Stern, eds.), G\"okova Geometry-Topology Conferences,
  International Press of Boston, Inc., 2006, pp.~75--109.

\bibitem[CF11]{CF}
D.~Conti and M.~Fern\'{a}ndez, \emph{{N}ilmanifolds with a calibrated
  $\mathrm{G}_2$-structure}, Differential Geom. Appl. \textbf{29} (2011),
  no.~4, 493--506.

\bibitem[CFS11]{CFS}
D.~Conti, M.~Fern\'{a}ndez, and J.~A. Santisteban, \emph{{S}olvable lie
  algebras are not that hypo}, Transform. Groups \textbf{16} (2011), no.~1,
  51--69.

\bibitem[CI07]{CI}
R.~Cleyton and S.~Ivanov, \emph{On the geometry of closed
  $\mathrm{G}_2$-structures.}, Commun. Math. Phys. 270, No. 1, 53--67 (2007).
  \textbf{270} (2007), no.~1, 53--67.

\bibitem[Fre12]{F1}
M.~Freibert, \emph{Cocalibrated structures on {L}ie algebras with a codimension
  one {A}belian ideal}, Ann. Global Anal. Geom. \textbf{42} (2012), no.~4,
  537--563.

\bibitem[Fre13]{F2}
\bysame, \emph{Calibrated and parallel structures on almost {A}belian {L}ie
  algebras}, preprint \url{arxiv:1307.2542 [math.DG]}, July 2013.

\bibitem[FS16]{FS}
M.~Freibert and A.~Swann, \emph{Solvable groups and a shear construction}, J.
  Geom. Phys. \textbf{106} (2016), 268--274.

\bibitem[FU13]{FU}
A~Fino and L.~Ugarte, \emph{On generalized {G}auduchon metrics}, Proc. Edinb.
  Math. Soc. (2) \textbf{56} (2013), no.~3, 733--753.

\bibitem[FV15]{FV}
A.~Fino and L.~Vezzoni, \emph{{S}pecial {H}ermitian metrics on compact
  solvmanifolds}, J. Geom. Phys. \textbf{91} (2015), 40--53.

\bibitem[GPS97]{GPS}
G.~W. Gibbons, G.~Papadopoulos, and K.~S. Stelle, \emph{{H}{K}{T} and {O}{K}{T}
  geometries on solition black hole moduli spaces}, Nuclear Phys. B
  \textbf{508} (1997), no.~3, 623--658.

\bibitem[Hit01]{Hi}
N.~Hitchin, \emph{Stable forms and special metrics}, Global Differential
  Geometry: The Mathematical Legacy of Alfred Gray (M.~Fernandez and J.~A.
  Wolf, eds.), Contemporary Mathematics, vol. 288, American Mathematical
  Society, 2001, pp.~70--89.

\bibitem[IP13]{IP}
S.~Ivanov and G.~Papadopoulos, \emph{{V}anishing theorems on $(l|k)$-strong
  {K}\"ahler manifolds with torsion}, Adv. Math. \textbf{237} (2013), 147--164.

\bibitem[Mac87]{McK2}
Kirill C.~H. Mackenzie, \emph{Lie groupoids and {L}ie algebroids in
  differential geometry}, London Mathematical Society Lecture Note Series, vol.
  124, Cambridge University Press, Cambridge, 1987.

\bibitem[Mac05]{McK1}
\bysame, \emph{General theory of {L}ie groupoids and {L}ie algebroids}, London
  Mathematical Society Lecture Note Series, vol. 213, Cambridge University
  Press, Cambridge, 2005.

\bibitem[MC06]{MC}
F.~Mart\'\i n~Cabrera, \emph{$\mathrm{SU}(3)$-structures on hypersurfaces of
  manifolds with $\mathrm{G}_2$-structures}, Monatsh. Math. \textbf{148}
  (2006), no.~1, 29--50.

\bibitem[MS15]{MS}
\'{O}. Maci\'{a} and A.~Swann, \emph{Twist geometry of the c-map}, Comm. Math.
  Phys. \textbf{336} (2015), no.~3, 1329--1357.

\bibitem[Swa07]{Sw1}
A.~F. Swann, \emph{T is for twist}, Proceedings of the XV International
  Workshop on Geometry and Physics, Puerto de la Cruz, September 11--16, 2006
  (D.~Iglesias~Ponte et~al., ed.), Publicaciones de la Real Sociedad
  Matem\'{a}tica Espa\~{n}ola, vol.~11, Spanish Royal Mathematical Society,
  2007, pp.~83--94.

\bibitem[Swa10]{Sw2}
\bysame, \emph{Twisting {H}ermitian and hypercomplex geometries}, Duke Math. J.
  \textbf{155} (2010), no.~2, 403--431.

\bibitem[Swa16]{Sw3}
\bysame, \emph{Twists versus modifications}, Adv. Math. \textbf{303} (2016),
  611--637.

\bibitem[SYZ96]{SYZ}
A.~Strominger, S.-T. Yau, and E.~Zaslow, \emph{Mirror symmetry is t-duality},
  Nuclear Phys. B \textbf{479} (1996), no.~1--2, 243--259.

\bibitem[Uga07]{U}
L.~Ugarte, \emph{Hermitian structures on six-dimensional nilmanifolds},
  Transform. Groups \textbf{12} (2007), no.~1, 175--202.

\end{thebibliography}
\end{document}